\documentclass[english,reqno]{amsart}
\usepackage{color}
\usepackage{amssymb}
\usepackage{amsmath}
\usepackage{amsthm}
\usepackage{amscd}
\usepackage{mathrsfs}
\usepackage{tikz}
\usepackage{bm}
\theoremstyle{definition}
\newtheorem{dfn}{Definition}[section]
\theoremstyle{plain}
\newtheorem{thm}[dfn]{Theorem}

\newtheorem{cor}[dfn]{Corollary}
\newtheorem{lm}[dfn]{Lemma}
\theoremstyle{remark}

\newtheorem{rem}[dfn]{Remark}
\numberwithin{equation}{section}

\usepackage[linktocpage,colorlinks,linkcolor=blue,anchorcolor=blue,citecolor=blue,urlcolor=black,pagebackref]{hyperref}

\newcommand{\quotient}[2]{
\mathchoice{  \text{\raise1ex\hbox{$#1$}\!\Big/\!\lower1ex\hbox{$#2$}} }% \displaystyle
                  {  \text{\raise1pt\hbox{$#1$}\big/\lower1pt\hbox{$#2$}} }% \textstyle
                  {  {#1}\,/\,{#2}  }% \scriptstyle
                  {  {#1}\,/\,{#2}  }% \scriptscriptstyle
}
\title{The strong Haagerup inequality for $q$-circular systems}
\author{Todd Kemp}
\address{Department of Mathematics, University of California, San Diego 9500 Gilman Drive \#0112 La Jolla, CA  92093-0112, USA}
\email{tkemp@ucsd.edu}
\thanks{T. Kemp acknowledges support from NSF grants DMS-2055340 and DMS-2400246.}

\author{Akihiro Miyagawa}
\address{Department of Mathematics, University of California, San Diego 9500 Gilman Drive \#0112 La Jolla, CA  92093-0112, USA}
\email{amiyagwa@ucsd.edu}
\thanks{A. Miyagawa acknowledges support from JSPS Overseas Research Fellowship and NSF grant DMS-2055340.}

\begin{document}
\begin{abstract}
  Together with Speicher, in 2007 the first author proved the {\em strong Haagerup inequality} for operator norms of homogeneous holomorphic polynomials in freely independent $\mathscr{R}$-diagonal elements (including in particular circular random variables); the inequality improved the bound from the original Haagerup inequality to grow with $\sqrt{n}$, rather than linearly in $n$, on homogeneous polynomials of degree $n$. In this paper, we prove a similar inequality for $q$-circular systems for $|q|<1$, generalizing the free case when $q=0$.  In particular, we prove the strong Haagerup inequality for systems exhibiting neither free independence nor $\mathscr{R}$-diagonality.  As an application, we prove a {\em strong ultracontractivity} theorem for the $q$-Ornstein--Uhlenbeck semigroup, and prove sharp rates for the Haagerup and ultracontractive inequalities.
\end{abstract}

\maketitle

\section{Introduction}
In 1979, Haagerup proved a landmark inequality bounding the operator norm of a homogeneous polynomial in the generators of a free group by its $\ell^2$ norm.  The precise result published in \cite{MR0520930}, now known as the {\em Haagerup inequality}, is as follows: in the reduced $\mathrm{C}^*$-algebra $\mathrm{C}^*_{\mathrm{red}}(\mathbb{F}_d)$ of the free group $\mathbb{F}_d$ on $d\ge 2$ generators,
\[\left\| \sum_{ |g|= n}\alpha_g \lambda(g) \right\|_{\mathrm{C}^*_{\mathrm{red}}(\mathbb{F}_d)}\le (n+1)\left\|\sum_{|g|= n}\alpha_g \lambda(g) \right\|_{\ell^2(\mathbb{F}_d)}\]
where $|g|$ is the length of $g\in \mathbb{F}_d$ as a reduced word.

Equipping $\mathrm{C}_{\mathrm{red}}^\ast(\mathbb{F}_d)$ with the tracial state induced by the indicator of the identity, the $\mathrm{C}^\ast$-algebra becomes a $\mathrm{C}^\ast$-probability space, and the generators $g_1,\ldots,g_d$ of the group yield operators $\lambda(g_1),\ldots,\lambda(g_d)$ that are freely independent (each possessing the Haar unitary distribution).  The result may therefore be interpreted in a self-contained free probabilistic context, as follows.

Let $u_1,\ldots,u_d$ be freely independent Haar unitary elements of a $\mathrm{C}^\ast$-probability space.  For each $n\in\mathbb{N}$, if $w=w_1w_2\cdots w_n$ is a word of length $n$ in $\{\pm 1,\ldots,\pm d\}^n$, denote by $u_w = u_{w_1}\cdots u_{w_n}$ the product of the Haar unitaries indexed by the word $w$, where $u_{-k} = u_k^{-1}$.  Then for any scalars $\alpha_w\in\mathbb{C}$,
\begin{equation} \label{eq:Haagerup} \left\|\sum_{|w|=n} \alpha_w u_w\right\| \le (n+1)\left\|\sum_{|w|=n} \alpha_w u_w\right\|_2 \end{equation}
where $\|\,\cdot\,\|$ denotes the $\mathrm{C}^\ast$-norm, $\|\,\cdot\,\|_2$ denotes the noncommutative $L^2$-norm of the $\mathrm{C}^\ast$-probability space, $|w|=n$ means that the word $w$ has {\em reduced} length $n$, meaning that no two adjacent indices in $w$ are negatives of each other.

Sums of the form in Haagerup's inequality are homogeneous polynomials in the given unitary variables {\em and their inverses}, or equivalently their adjoints.  In \cite{MR2353703}, the first author and Speicher considered the case of homogeneous {\em holomorphic} polynomials, where no adjoints are allowed.  This simplifies the set of words in the sum, since cancelations are no longer possible (all non-trivial words are already reduced).  This turns out to have a profound effect on the inequality: instead of growing linearly with $n$, the constant grows with $\sqrt{n}$ instead.  \cite{MR2353703} proved this not only for the original context of freely independent Haar unitary elements, but more generally for freely independent $\mathscr{R}$-diagonal elements introduced by Nica and Speicher in \cite{MR1426839}.

\begin{thm}[\cite{MR2353703}] \label{thm.KS} Let $a_1,\ldots,a_d$ be freely independent identically distributed $\mathscr{R}$-diagonal elements in a tracial $\mathrm{C}^\ast$-probability space.  There is a constant $C_{a_1}$, independent of $d$, so that, for each $n\in\mathbb{N}$,
\[\left\| \sum_{ |w|= n}\alpha_w a_w \right\|\le C_{a_1} \sqrt{n+1}\left\|\sum_{|w|= n}\alpha_w a_w \right\|_{2}.\]
\end{thm}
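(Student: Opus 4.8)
My plan is to estimate the operator norm through high $L^{2}$-moments and then expand those moments combinatorially using the free $\mathscr{R}$-diagonal structure. Pass to the tracial von Neumann algebra generated by $a_{1},\dots,a_{d}$; there, for any $T\ge 0$ one has $\tau(T^{k})^{1/k}\to\|T\|$ as $k\to\infty$. Applying this to $T=P^{*}P$ with $P=\sum_{|w|=n}\alpha_{w}a_{w}$, it suffices to produce a bound
\[
\tau\!\big((P^{*}P)^{k}\big)\;\le\; c_{k}\,\big(C_{a_{1}}\sqrt{n+1}\,\big)^{2k}\,\|P\|_{2}^{2k},\qquad k\in\mathbb{N},
\]
with $c_{k}^{1/k}\to 1$ (I will aim for $c_{k}$ polynomial in $k$); letting $k\to\infty$ then gives the theorem, and the bound will be manifestly independent of $d$.

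Expanding,
\[
\tau\!\big((P^{*}P)^{k}\big)=\sum \bar\alpha_{v_{1}}\alpha_{w_{1}}\cdots\bar\alpha_{v_{k}}\alpha_{w_{k}}\;\tau\!\big(a_{v_{1}}^{*}a_{w_{1}}\cdots a_{v_{k}}^{*}a_{w_{k}}\big),
\]
the sum over $2k$ words $v_{i},w_{i}$ of length $n$. Writing each $a^{*}_{v_{i}}$ and each $a_{w_{i}}$ as a product of $n$ letters, the argument of $\tau$ becomes a word of length $2nk$ in which the $a^{*}$-letters occur in exactly $k$ consecutive syllables of length $n$ (one per $v_{i}$) and the $a$-letters likewise (one per $w_{i}$). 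Now I invoke the moment--cumulant formula together with the two structural facts from Nica--Speicher \cite{MR1426839}: free independence forces every block of a contributing (cyclic) non-crossing partition to be \emph{monochromatic}, carrying a single index $j\in\{1,\dots,d\}$, and $\mathscr{R}$-diagonality forces every block to have \emph{even size and to alternate} between $a_{j}$ and $a_{j}^{*}$ around the cycle. After the harmless normalization $\kappa_{2}(a_{1}^{*},a_{1})=1$ (equivalently $\|P\|_{2}^{2}=\sum_{|w|=n}|\alpha_{w}|^{2}$; a general $a_{1}$ reduces to this by rescaling, which divides both sides of the desired inequality by the same factor), one checks that the only admissible partition for $a_{v}^{*}a_{w}$ is the nested pairing, so $\tau(a_{v}^{*}a_{w})=\delta_{vw}$ and the $k=1$ case is an identity.

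For general $k$ I would fix an admissible partition $\pi$ and split the sum: over the word-tuples $(v_{i},w_{i})$ compatible with the monochromaticity of $\pi$, and over the admissible $\pi$ themselves. The monochromaticity identifies the index variables of the $v$- and $w$-syllables according to the block pattern of $\pi$, and an iterated Cauchy--Schwarz, carried out by peeling off innermost blocks in the non-crossing order, bounds the coefficient sum by $\|P\|_{2}^{2k}$. The cumulant weights must be kept packaged with the combinatorics rather than bounded crudely: the higher free cumulants of an $\mathscr R$-diagonal $a_{1}$ (already the Catalan-type cumulants of a Haar unitary) grow exponentially, so a naive $|\kappa_{2m}|\le R^{2m}$ would lose a factor of $n$ in the exponent; instead, a block of size $2m$ substitutes for $m$ pair-blocks, so partitions with large blocks are proportionately scarcer, and organizing the sum through the determining series of $a_{1}$ turns this trade-off into the single distribution-dependent constant $C_{a_{1}}$. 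In the circular case only pair-blocks occur and this subtlety disappears, which makes that case the transparent core.

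What remains—and this is the step I expect to be the main obstacle—is a purely enumerative estimate: the number of admissible cyclic non-crossing partitions of the $2nk$-cycle carrying the \emph{holomorphically segregated} colour pattern ``$n$ copies of $a^{*}$, $n$ copies of $a$, repeated $k$ times'' must be shown to be at most $c_{k}\,(C_{a_{1}}\sqrt{n+1}\,)^{2k}$ with $c_{k}=\mathrm{poly}(k)$. Holomorphy is exactly what buys the improvement: when adjoints are allowed the length-$n$ syllables are themselves alternating in $a$ and $a^{*}$, giving roughly $(n+1)^{2}$ ways to reconnect at each syllable and hence the classical linear constant $n+1$; segregating the creators from the annihilators kills the intra-syllable reconnections and leaves $\sim(n+1)$ per syllable, i.e.\ $\sqrt{n+1}$ overall. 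I would prove this count by a transfer/recursion on $k$, analysing how a fresh pair of syllables can be inserted into, or nested around, the existing non-crossing structure, then feed the resulting $c_{k}=\mathrm{poly}(k)$ into the moment bound and let $k\to\infty$ to conclude.
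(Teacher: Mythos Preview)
The paper does not actually prove Theorem~\ref{thm.KS} in the stated generality of free $\mathscr{R}$-diagonal elements; it is quoted from \cite{MR2353703}. What the present paper \emph{does} prove is the special case of a free circular system, in Section~\ref{freecase}, by a method entirely different from yours. There the circular variables are realized on the full Fock space as $c_i=a_i+a_{\overline i}^{*}$, the product $\sum_{|w|=n}\alpha_w c_w$ is expanded as $\sum_{k=0}^n A_k$ with $A_k=\sum\alpha_{uv}a_u a_{\overline v}^{*}$, and the operator norm of $\sum_{k\ge 1}A_k$ is bounded by a direct induction using the exact relation $a_i^{*}a_j=\delta_{ij}$ (which collapses $A_k^{*}A_l$ into a form to which the inductive hypothesis applies) together with Bo\.zejko's norm bound $\|A_k\|\le\|P\|_2$. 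No moment expansion, no non-crossing partitions, no $k\to\infty$ limit---the argument works at the level of operator norms from the start. This Fock-space approach is the one the paper then deforms to handle $q\ne 0$, which is its actual goal; it does \emph{not} extend to general $\mathscr{R}$-diagonal elements, since those have no comparable Fock realization.

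Your sketch, by contrast, is essentially the original Kemp--Speicher strategy from \cite{MR2353703}: bound $\|P\|$ via $\tau((P^{*}P)^k)^{1/2k}$, expand with the free moment--cumulant formula, and reduce to an enumerative estimate on alternating non-crossing partitions of the $2nk$-cycle. As a roadmap this is correct, and you have located the genuine difficulty in the right place. Two remarks on the gaps you flag. First, the iterated Cauchy--Schwarz ``peeling'' of the coefficient sum is indeed what \cite{MR2353703} does, and it works as you say. Second, the enumerative step is not merely a count of admissible partitions: for non-circular $\mathscr{R}$-diagonal elements the higher cumulant weights and the partition count must be summed \emph{together}, and the mechanism in \cite{MR2353703} is to recognize this weighted sum over $NC^{*}(2nk)$ as a mixed moment of $n$ free copies of $a_1$, which can then be controlled by the operator norm of $a_1$ (giving the constant $C_{a_1}$). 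Your heuristic that large blocks are ``proportionately scarcer'' is pointing at the right cancellation, but the actual argument packages it as a free-probabilistic identity rather than a counting lemma; trying to separate the cumulant bound from the enumeration, as your outline suggests, is where a direct attempt would most likely stall.
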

The inequality in Theorem \ref{thm.KS} is called the {\em strong Haagerup inequality}.  The collection of $\mathscr{R}$-diagonal elements forms a large family of non-self-adjoint random variables including Voiculescu's circular elements and Haar unitaries.  They are precisely the large-dimension limits of bi-invariant random matrix ensembles (meaning with law invariant under multiplication on the left or the right with a fixed unitary); in free probability terms, they can be characterized by the property that $ua$ and $au$ have the same $\ast$-distribution as $a$ for any Haar unitary $u$ free from $a$.  Combinatorially, this amounts to a collections of alternating symmetries of their joint free cumulants; as such, the proof of the strong Haagerup inequality in \cite{MR2353703} is based on the moment-cumulant formula in free probability.  The fact that the homogeneous polynomials involved are holomorphic sets up alternating patterns in their moments, compatible with the $\mathscr{R}$-diagonal structure; in the general case involving adjoints as well as in Haagerup's original case \eqref{eq:Haagerup}, it is unknown whether the result extend to general $\mathscr{R}$-diagonal elements or beyond.  The one exception is the case of freely independent semicircular elements, where a version of \eqref{eq:Haagerup} was shown to hold in \cite{MR1811255}, but where `homogeneous' must be interpreted differently, in terms of Tchebyshev polynomials in each variable rather than monomials.  We explain this in detail in Section \ref{preliminary}.

There have been some generalizations of Kemp--Speicher's strong Haagerup inequality. In the paper \cite{MR2557731}, de la Salle proved that freely independent $\mathscr{R}$-diagonal elements also satisfy the analog of this inequality for operator coefficients polynomials like $ \sum_{ |w|= n}\alpha_w \otimes x_w $. On the other hand, Brannan \cite[Theorem 1.4]{MR2892462} generalized the strong Haagerup inequality for non-commutative random variables with some invariant properties that do not require the variables to be freely independent. In particular, he proved the strong Haagerup inequality for the free unitary quantum group $U_N^{+}$ (see also \cite{MR4432279} for non-Kac type orthogonal quantum groups). 

In this paper, we prove that this type of estimate also holds for $q$-circular systems $c_1^{(q)},\ldots,c_d^{(q)}$ (see the discussion below and Section \ref{preliminary}), which are not freely independent and do not satisfy assumptions in Brannan's paper \cite[Theorem 1.4.]{MR2892462} (more precisely, the $q$-circular system is not invariant under free complexification). Namely, our main result is the following.
\begin{thm}\label{Mainresult}
    For $d\in\mathbb{N}$ and $-1<q<1$, let $\{c_1^{(q)},\ldots,c_d^{(q)}\}$ be a $q$-circular system. there exists some $A=A(|q|)$ (independent of $d$) such that, for any $n \in \mathbb{N}$ and coefficients $\{\alpha_w\}_{w\in[d]^*}$ in $\mathbb{C}$,
    \[
    \left\|\sum_{|w|=n}\alpha_w c_w^{(q)}\right\|\le A\sqrt{n+1}\left\|\sum_{|w|=n}\alpha_w c_w^{(q)}\right\|_2.\]
\end{thm}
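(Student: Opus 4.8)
The plan is to adapt the Kemp--Speicher moment-method strategy to the $q$-Fock space realization of the $q$-circular system. Recall that $c_k^{(q)} = a_k + b_k^*$ where $a_1,\ldots,a_d$ and $b_1,\ldots,b_d$ are $2d$ $q$-creation/annihilation operators on the $q$-Fock space $\mathcal{F}_q(H)$ with $\dim H = 2d$; concretely one takes $a_k = \ell(e_k)$, $b_k^* = \ell(f_k)$ for an orthonormal family $e_1,\ldots,e_d,f_1,\ldots,f_d$. A holomorphic monomial $c_w^{(q)} = c_{w_1}^{(q)}\cdots c_{w_n}^{(q)}$ is then a sum of $2^n$ products of $q$-creation and $q$-annihilation operators. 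The first key step is a \emph{Wick-type / Haagerup-type decomposition}: write $P_n = \sum_{|w|=n}\alpha_w c_w^{(q)}$ as a sum over $0\le k\le n$ of operators $T_k$ that act as "$k$ creations followed by $n-k$ annihilations" (after commuting annihilations past creations, picking up powers of $q$), so that $T_k$ maps $\mathcal{F}_q^{(m)}(H)$ into $\mathcal{F}_q^{(m-n+2k)}(H)$. Since there are $n+1$ such pieces, it suffices to bound each $\|T_k\|$ by a constant (depending only on $|q|$) times $\|P_n\|_2$; the $\sqrt{n+1}$ then comes from $\|P_n\|\le \sum_k\|T_k\|\le (n+1)\max_k\|T_k\|$ combined with the standard trick of replacing the worst single bound by a Cauchy--Schwarz sum — more precisely, one shows $\|T_k\|\le C(|q|)\|P_n\|_2$ for each $k$ but in fact gets $\sum_k \|T_k\|^2$-type control, yielding $\sqrt{n+1}$ rather than $n+1$.

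Concretely, the analytic heart is to estimate the norm of a single homogeneous "creation block." For the purely holomorphic top-degree part $T_n$ (all creations, no annihilations), $T_n$ is an isometry-like map from $\mathcal{F}_q^{(0)}$-ish levels up, and its norm on $\mathcal{F}_q(H)$ should be controlled by the norm of the corresponding tensor $\xi = \sum_{|w|=n}\alpha_w e_{w_1}\otimes\cdots\otimes e_{w_n} \in H^{\otimes n}$ acting by left $q$-creation. The crucial input here is the boundedness of the $q$-Fock space inner product: the Bożejko--Speicher result that the operator $P_q^{(n)}$ (the symmetrizer built from $\sum_{\sigma\in S_n} q^{\mathrm{inv}(\sigma)}\sigma$) satisfies $0< \prod_{k\ge1}(1-|q|^k) \le P_q^{(n)} \le \prod_{k\ge 1}(1-|q|^k)^{-1}$ uniformly in $n$, which is what makes the $q$-deformed norm comparable, up to the $|q|$-dependent constant $\prod(1-|q|^k)^{\pm1}$, to the $q=0$ (full Fock) norm. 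On the full Fock space the norm of left-creation by $\xi$ is exactly $\|\xi\|_{H^{\otimes n}}$, which by orthonormality equals $(\sum_{|w|=n}|\alpha_w|^2)^{1/2} = \|P_n\|_2$ (the holomorphic monomials $c_w^{(q)}$ being $L^2$-orthonormal up to a $|q|$-bounded factor). So the top block is handled by transferring to $q=0$ via the Bożejko--Speicher bounds.

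The main obstacle — and the genuinely new work beyond the $q=0$ case — is the \emph{mixed blocks} $T_k$ with $0<k<n$: here one must commute $n-k$ annihilation operators to the right past creation operators, and each commutation $\ell(g)^*\ell(h) = \langle g,h\rangle + q\,\ell(h)\ell(g)^*$ generates a geometric cascade of lower-order terms weighted by powers of $q$ and inner products $\langle e_i, e_j\rangle, \langle e_i,f_j\rangle$. In the free case $q=0$ only the "fully contracted" term survives and the analysis collapses to a single partition; for $q\ne 0$ one gets a sum over all ways of pairing some annihilations with some creations, each surviving with a $q$-weight determined by the nesting/crossing structure (a $q$-analogue of the non-crossing-pair-partition expansion). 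The plan is to organize this as: (i) expand $T_k$ as a finite sum over "contraction schemes"; (ii) for each scheme, recognize the resulting operator as left-creation by a partially-contracted tensor composed with right-annihilation by another, whose norms are again controlled by $H^{\otimes\bullet}$ tensor norms via Bożejko--Speicher; (iii) bound the $q$-weighted sum of these tensor norms by $C(|q|)\|P_n\|_2$, crucially using that $\sum_{m}|q|^m < \infty$ absorbs the combinatorial proliferation of schemes so that the constant depends only on $|q|$, not on $n$ or $d$. The $d$-independence is automatic throughout because every estimate is phrased in terms of the single tensor $\sum\alpha_w e_{w_1}\otimes\cdots\otimes e_{w_n}$ and Hilbert-space norms, never referencing $d$ directly. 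Summing the $n+1$ block bounds and applying Cauchy--Schwarz over $k$ gives the claimed $A(|q|)\sqrt{n+1}$.
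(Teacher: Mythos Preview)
Your decomposition $P_n=\sum_{k=0}^n T_k$ and the individual bound $\|T_k\|\le C(|q|)\|P_n\|_2$ are both correct (this is essentially Bo\.zejko's Lemma~2.2 combined with Corollary~\ref{generalproduct} and Lemma~\ref{Rstar} in the paper). But the step where you claim the $\sqrt{n+1}$ appears is a genuine gap. There is no ``standard trick'' that converts the individual bounds $\|T_k\|\le C\|P_n\|_2$ into $\|P_n\|\le C\sqrt{n+1}\,\|P_n\|_2$. The triangle inequality gives only $\|P_n\|\le\sum_k\|T_k\|\le (n+1)C\|P_n\|_2$, which is Bo\.zejko's Haagerup inequality, not the strong one. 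Cauchy--Schwarz on the sum requires $\sum_k\|T_k\|^2\le C'\|P_n\|_2^2$ with $C'$ independent of $n$, and this is false in general: already for $q=0$ each $\|T_k\|$ can equal $\|P_n\|_2$, so $\sum_k\|T_k\|^2$ can be of order $(n+1)\|P_n\|_2^2$. Orthogonality of the ranges of the $T_k$ on graded inputs does not help either, since on a general vector different $T_k$'s act on different graded components and the cross-terms reappear.

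The paper's proof does something fundamentally different from blockwise estimation: it writes $\|\sum_{k\ge1}T_k\|^2=\|\sum_{k,l}T_k^*T_l\|$, bounds the diagonal $\sum_k\|T_k\|^2\le Cn\|P_n\|_2^2$ (the correct scaling), and then analyzes the off-diagonal $\sum_{k<l}T_k^*T_l$ via the $q$-commutation formula (Corollary~\ref{creationannihilation2}). The crucial point is that after commuting, one of the resulting pieces has exactly the form $\sum_{l=1}^{n-k}M(a_l\otimes a^*_{n-k-l})(\cdot)$, i.e.\ the same problem in degree $n-k<n$, which allows a \emph{strong induction on $n$}. The remaining pieces carry geometric $q$-weights and are bounded directly. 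Your ``contraction scheme'' picture is morally related to this commutation expansion, but you apply it to bound $\|T_k\|$ rather than $\|T_k^*T_l\|$, and that misses the inductive mechanism that actually produces the $\sqrt{n+1}$.
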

The concept of a $q$-circular system was introduced by Mingo and Nica \cite{MR1871395} as a $q$-deformation of a circular system ($q=0$) based on the work on the $q$-canonical commutation relations \cite{MR0260352}, \cite{MR1105428}. These $q$-relations interpolate between the Bosonic $(q=1)$ CCR (canonical commutation relations) and Fermionic $(q=-1)$ CAR (canonical anticommutation relations) in quantum field theory. From this perspective, the algebra of holomorphic polynomials $\sum_{w}\alpha_w c_w^{(q)}$ has been studied in terms of a $q$-analog of the Segal-Bergman space (see \cite{MR2174419} and \cite{MR3802726}). Regarding our main result, if we replace $\sqrt{n+1}$ with $n+1$, the inequality follows from Bo\.{z}ejko's Haagerup inequality \cite{MR1811255} for $q$-Gaussian systems together with the $q$-Segal--Bargmann isometry in \cite{MR2174419}.  In addition, the second author proved that the operator norm of $\sum_{|w|=n}\alpha_w c_w^{(q)}$ is continuous as a function of $q$ \cite{MR4685342}. However, these results did not give any indication whether the strong Haagerup inequality holds in the $q$-circular context, until now.

There is a heuristic observation of this $q$-deformation that the case $-1<q<1$ shares the same properties with the case $q=0$ which can be described by free probability. This is supported by the isomorphism of $q$-Gaussian von Neumann algebras \cite{MR3251831} and $q$-Cuntz-Toeplitz algebras \cite{MR1225964},\cite{MR4580530},  hypercontractivity and ultracontractivity of $q$-Ornstein-Uhlenbeck semigroup \cite{MR1462754}, \cite{MR2174419},\cite{MR1811255},...etc. Our main result of the strong Haagerup inequality also follows this pattern.

While previous works on the strong Haagerup inequality are based on combinatorial arguments of joint moments, we use several norm inequalities for creations and annihilation operators proved by Bo\.{z}ejko \cite{MR1811255}. He used these inequalities to prove the Haagerup inequality for $q$-Gaussian systems giving us some insights to prove the strong Haagerup inequality.
As an application, our results give us bounds on the joint moments of a $q$-circular system without computing combinatorial moments directly. Since $\|(c^{(q)})^n\|_{2m}\le \|(c^{(q)})^n\|$ for any $m \in \mathbb{N}$, by the formula of moments of a $q$-circular random variable $c^{(q)}$ \cite[Definition 1.2]{MR1871395}, our main result implies
\[\sum_{\pi \in P_{\ast-1}(\underbrace{\ast^n,\ 1^n,\ldots,\ast^n,\ 1^n}_{2mn\ \mathrm{vertices}})}q^{\mathrm{cr}(\pi)}\le A^{2m}(n+1)^m{([n]_q!)}^{m}\]
where $P_{\ast-1}(\ast^n,\ 1^n,\ldots,\ast^n,\ 1^n)$ is the set of pair partitions on the $2mn$ vertices alternately aligned with $n$ $\ast$s and $n$ $1$s in which every block connects 
 a $\ast$ with a $1$ and $\mathrm{cr}(\pi)$ denotes the number of crossings of $\pi$. When $q=0$, the left-hand side of the inequality is equal to the Fuss-Catalan number $\frac{1}{m}\binom{m(n+1)}{m-1}$ (see \cite[Corollary 3.2]{MR2353703}). 

Another way to encode the $L^2-L^\infty$ bound (and the growth of the constant with $n$) in Theorem \ref{Mainresult} is to consider more general noncommutative holomorphic polynomials in the circular variables
\[ h(\mathbf{c}^{(q)}) = f(c_1^{(q)},\ldots,c_d^{(q)}) = \sum_{w\in[d]^\ast} \alpha_w c_w^{(q)} \]
where the sum is finite, say (i.e.\ with $\alpha_w=0$ for all but finitely-many words $w$).  To encode the strong Haagerup inequality for homogeneous polynomials, we can consider the action of the dilation semigroup $D_t h(\mathbf{c}^{(q)}) = h(e^{-t}\mathbf{c}^{(q)}) = f(e^{-t}c^{(q)}_1,\ldots,e^{-t}c^{(q)}_d)$.  In other words, for a homogeneous sum of degree $n$,
\[ D_t\left(\sum_{|w|=n} \alpha_w c_w^{(q)}\right) = e^{-nt} \sum_{|w|=n} \alpha_w c_w^{(q)}. \]
The main estimates of Theorem \ref{Mainresult} prove an $L^2-L^\infty$ bound for the dilation semigroup: for some constant $\beta=\beta(|q|)$,
\begin{equation} \label{eq:dilation.semigroup} \|D_t h\| \le \frac{\beta}{t} \|h\|_2 \qquad \forall h\in\mathbb{C}\langle c_1^{(q)},\ldots,c_d^{(q)}\rangle.\end{equation}
This estimate falls under the name {\bf ultracontractivity}; the rate $1/t$ of blow-up at $t=0$ relates directly to the `strong' in our `strong Haagerup inequality' with rate $\sqrt{n+1}$ rather than $n+1$ comparing $L^2$ and operator norms of degree-$n$ homogeneous polynomials.  Section \ref{sect.ultra} below details the strong ultracontractivity theorem in \eqref{eq:dilation.semigroup} in a larger context (beyond polynomials in the realm of some natural holomorphic Hilbert spaces), and moreover proves that this estimate is sharp: $\|D_t\colon L^2\to L^\infty\|$ is precisely of order $1/t$ (bounded above and below).  The tools used to prove this also yield a simple corollary (and self-contained proof) that our main result in Theorem \ref{Mainresult} is also sharp: even in the case $d=1$, the operator norm of a degree-$n$ homogeneous holomorphic polynomial in a $q$-circular variable is bounded {\em below} by a constant times $\sqrt{n+1}$ times its $L^2$-norm.  See Theorems \ref{thm.ultra} and \ref{cor.ultra} below.

Our present approach leaves open several possibilities of other generalizations of the strong Haagerup inequality. We expect that one can show the strong Haagerup inequality for the mixed $q_{ij}$ and twisted relations since they satisfy the Haagerup inequality (see \cite{MR2091676} and \cite{MR2180382}). We also expect that one can show the strong Haagerup inequality for $q$-circular system with operator coefficients like de la Salle's results in \cite{MR2557731}.  We leave these for future work.

The remainder of this introductory section is devoted to a simplified case of our general argument in the case $q=0$.  That is: we provide a new proof of the circular case of the strong Haagerup inequality in \cite{MR2353703}, using new methods that generalize to the $q$-circular setting to prove Theorem \ref{Mainresult} in Section \ref{stronghaag} below.  Following that, Section \ref{preliminary} gives the essential background (Fock spaces, construction of $q$-circular systems, notation and results in \cite{MR1811255}, and some new preliminary computations lemmas) to set up our proof of the strong Haagerup inequality for $q$-circular systems.  The proof of that main result is the content of Section \ref{stronghaag}, a significant portion of which is devoted to the proof of the technical Lemma \ref{KeyLem} which is applied together with an argument mirroring the above one in the $q=0$ case to finish the proof of the $q$-circular strong Haagerup inequality.

\subsection{Another approach to the strong Haagerup inequality for free circular system}\label{freecase}
Here, we explain our approach to the strong Haagerup inequality in the $q=0$ case, i.e.\ for a free circular system, where there are a number of simplifications that help elucidate the core ideas of our methods.  This proof produces a non-optimal constant compared to the result in \cite{MR2353703}, but does produce the sharp $\sqrt{n}$ growth with the degree $n$ of the holomorphic polynomials.

We refer to Section \ref{preliminary} for our notations. First of all, our free circular system $\{c_i\}_{i=1}^d$ is realized on the full Fock space $\mathcal{F}_0(H)=\mathbb{C}\Omega \oplus \bigoplus_{n=1}^{\infty}H^{\otimes n}$ as the sum of left creation and annihilation operators $c_i =a_i + a_{\overline{i}}$ which satisfy $a_i^*a_j=\delta_{i,j}$ ($i,j\in \{1,\ldots,d\}\sqcup \{\overline{1},\ldots,\overline{d}\}$). 
Then, we expand products of free circular random variables with respect to left creation and annihilation operators (see Lemma \ref{qCircular}), and we have 
\[\sum_{|w|=n}\alpha_w c_w=\sum_{k=0}^n\sum_{|u|=k,|v|=n-k} \alpha_{uv} a_u a_{\overline{v}}^*=\sum_{k=0}^n A_k\]
where $A_k= \sum_{|u|=k,|v|=n-k} \alpha_{uv} a_u a_{\overline{v}}^*$. Note that we have
\[ \left\|\sum_{|w|=n}\alpha_w c_w\right\|_2^2=\left\|\sum_{|w|=n}\alpha_w e_w\right\|_{\mathcal{F}_0(H)}^2=\sum_{|w|=n}|\alpha_w|^2.\]
For a technical reason, we remove the term of $k=0$ and prove the existence of a constant $A>0$ such that $\|\sum_{k=1}^n A_k\|^2\le A n \|\sum_{|w|=n} \alpha_w c_w\|_2^2 $ for any $\{\alpha_w\}_{|w|=n}$. We have the identity for $\mathrm{C}^*$-algebras 
$\|\sum_{k=1}^n A_k\|^2=\|\sum_{k=1}^n A_k^*\sum_{l=1}^n A_l\|$ and expand the sum. Then by using the triangle inequality and the identity $\|X^*\|=\|X\|$, we have
\begin{align*}
    \left\|\sum_{k=1}^n A_k\right\|^2 &= \left\| \sum_{k=1}^n A_k^* \sum_{l=1}^nA_l\right\|\\
    &\le \left\|\sum_{k=1}^n A_k^*A_k\right\|+\left\|\sum_{k=1}^{n-1}\sum_{l=k+1}^n A_k^*A_l\right\|+\left\|\sum_{k=2}^{n}\sum_{l=1}^{k-1}A_k^*A_l \right\|\\
    &= \left\|\sum_{k=1}^n A_k^*A_k\right\|+2\left\|\sum_{k=1}^{n-1}\sum_{l=k+1}^n A_k^*A_l\right\|.
\end{align*}
It is known that $\|A_k\|\le \|\sum_{|w|=n}\alpha_w c_w\|_{2}$ for $0\le k \le n$ (see \cite[Proposition 2.1]{MR1811255} or Lemma \ref{norm_of_prodcuts} in this paper). Therefore, by the triangle inequality, we have
\[ \left\|\sum_{k=1}^n A_k^*A_k\right\|\le \sum_{k=1}^n \|A_k\|^2\le n \left\|\sum_{|w|=n}\alpha_w c_w\right\|_{2}^2 \]
By using the relation $a_i^* a_j = \delta_{i,j}$ ($i,j\in[d,\overline{d}]$), we can cancel some terms in $\left\|\sum_{k=1}^{n-1}\sum_{l=k+1}^n A_k^*A_l\right\|^2$ and this is equal to
    \begin{align*}
       &\left\|\sum_{k=1}^{n-1}\sum_{l=k+1}^n \sum_{|u_1|=k,|v_1|=n-k}\sum_{|u_2|=l,|v_2|=n-l}\overline{\alpha_{u_1v_1}}\alpha_{u_2v_2}(a_{\overline{v_1}}^*)^*(a_{u_1})^*a_{u_2}a_{\overline{v_2}}^*\right\|^2 \\
       &=\left\|\sum_{k=1}^{n-1}\sum_{l=k+1}^n \sum_{\substack{|u_1|=k,|v_1|=n-k\\|u_2|=l-k,|v_2|=n-l}}\overline{\alpha_{u_1v_1}}\alpha_{u_1u_2v_2}a_{\overline{v_1}^*}a_{u_2}a_{\overline{v_2}}^*\right\|^2
    \end{align*} 
    where we use $(a_{u_1})^*a_{u_2}=\delta_{u_1,x}a_{u'_2}$ for the decomposition $u_2=xu'_2$ with $|x|=k$ and $|u'_2|=l-k$ ($\delta_{u_1,x}=1$ if $u_1=x$ and $0$ otherwise) and we replace $u'_2$ by $u_2$.
Since $a_{\overline{i}}^*a_j = 0$ ($i,j\in [d]$), for $|v'_1|=n-k'$, $|u'_2|=l'$, $|v_1|=n-k$, $|u_2|=l$ with $k\neq k'$, we have
\[ (a_{\overline{v'_1}^*}a_{u'_2})^* a_{\overline{v_1}^*}a_{u_2}=0\]
and if $k=k'$,
\[ (a_{\overline{v'_1}^*}a_{u'_2})^* a_{\overline{v_1}^*}a_{u_2}= \delta_{v'_1,v_1} (a_{u'_2})^*a_{u_2}.\]
This implies 
    \begin{align*}
        &\left\|\sum_{k=1}^{n-1}\sum_{l=k+1}^n \sum_{\substack{|u_1|=k,|v_1|=n-k\\ |u_2|=l-k,|v_2|=n-l}}\overline{\alpha_{u_1v_1}}\alpha_{u_1u_2v_2}a_{\overline{v_1}^*}a_{u_2}a_{\overline{v_2}}^*\right\|^2\\
        &=\sum_{k=1}^{n-1}\sum_{|v_1|=n-k}\left\|\sum_{l=1}^{n-k} \sum_{|u_2|=l,|v_2|=n-k-l} \left(\sum_{|u_1|=k}\overline{\alpha_{u_1v_1}}\alpha_{u_1u_2v_2}\right)a_{u_2}a_{\overline{v_2}}^*\right\|^2.
        \end{align*}
We now construct a (strong) induction using the above inequality as the inductive step.  For $1\le k\le n-1$, presume we have proved that $ \|\sum_{l=1}^{n-k} A_l\|^2 \le A (n-k) \|\sum_{|w|=n-k}\alpha_w c_w\|_{2}^2$ for any $1\le k\le n-1$ (here $\alpha_w$ is replaced with $ \sum_{|u_1|=k}\overline{\alpha_{u_1v_1}}\alpha_{u_1w}$); the base case $k=n-1$ follows from the inequality $\|A_1\|^2 \le \|\sum_{|w|=1}\alpha_w c_w\|_{2}^2$ which we mentioned. Then we have      
        \begin{align*}
        &\sum_{k=1}^{n-1}\sum_{|v_1|=n-k}\left\|\sum_{l=1}^{n-k} \sum_{|u_2|=l,|v_2|=n-k-l} \left(\sum_{|u_1|=k}\overline{\alpha_{u_1v_1}}\alpha_{u_1u_2v_2}\right)a_{u_2}a_{\overline{v_2}}^*\right\|^2\\
         &\le A \sum_{k=1}^{n-1} (n-k) \sum_{|v_1|=n-k}\left\|\sum_{|w|=n-k} \left(\sum_{|u_1|=k}\overline{\alpha_{u_1v_1}}\alpha_{u_1w}\right)c_w\right\|^2_{2}\\
    \end{align*}
    Since $\|\sum_{|w|=n}\alpha_w c_w\|_2^2=\sum_{|w|=n}|\alpha_w|^2$, we have
    \[ \left\|\sum_{|w|=n-k} \left(\sum_{|u_1|=k}\overline{\alpha_{u_1v_1}}\alpha_{u_1w}\right)c_w\right\|^2_{2} =\sum_{|w|=n-k} \left|\sum_{|u_1|=k}\overline{\alpha_{u_1v_1}}\alpha_{u_1w}\right|^2. \]
    Then, by using Cauchy-Schwarz inequality, we have
    \begin{align*}
       & A \sum_{k=1}^{n-1} (n-k) \sum_{|v_1|=n-k}\sum_{|w|=n-k} \left|\sum_{|u_1|=k}\overline{\alpha_{u_1v_1}}\alpha_{u_1w}\right|^2\\
       &\le  A \sum_{k=1}^{n-1} (n-k) \sum_{|v_1|=n-k}\sum_{|w|=n-k} \sum_{|u_1|=k}|\overline{\alpha_{u_1v_1}}|^2\sum_{|u_1|=k}|\alpha_{u_1w}|^2\\
       &\le \frac{An^2}{2} \left\|\sum_{|w|=n}\alpha_w c_w\right\|_2^4.
    \end{align*}
    Therefore, if $A$ satisfies $\sqrt{2A}+1\le A$, we have
    \begin{align*}
    \left\|\sum_{k=1}^nA_k\right\|^2 &\le  \left\|\sum_{k=1}^n A_k^*A_k\right\|+2\sqrt{\left\|\sum_{k=1}^{n-1}\sum_{l=k+1}^n A_k^*A_l\right\|^2}
    \\&\le \left(\sqrt{2A}+1\right)n \left\|\sum_{|w|=n}\alpha_w c_w\right\|_{2}^2\\
     &\le A n \left\|\sum_{|w|=n}\alpha_w c_w\right\|_{2}^2.
     \end{align*}
     To obtain the strong Haagerup inequality, we use the triangle inequality and $\|A_0\|\le \|\sum_{|w|=n}\alpha_w c_w\|_2$, and we have
     \begin{align*}
         \left\|\sum_{|w|=n}\alpha_w c_w\right\| &\le \|A_0\|+\left\|\sum_{k=1}^nA_k\right\| \\
         &\le (\sqrt{An}+1) \left\|\sum_{|w|=n}\alpha_w c_w\right\|_2 \\
         &\le A'\sqrt{n+1}\left\|\sum_{|w|=n}\alpha_w c_w\right\|_2 
     \end{align*}
     for some $A'>0$ ($A'=\sqrt{2A}$, for example).

     This argument was facilitated heavily by the commutation relation $a_i^\ast a_j = \delta_{i,j}$, which was used repeatedly above to simplify sums.  In the $q\ne 0$ case, the commutation relation $a_i^\ast a_j - q a_j a_i^\ast = \delta_{i,j}$ will play a similar role, but leads to many more complicated terms.  The overall flow of the above proof still stands, but much more sophisticated estimates (not just iterated use of the triangle inequality and the Cauchy-Schwarz inequality) will be required to push the analysis through.

\section{Background, Setup, and Preliminary Estimates}\label{preliminary}

The principle construct needed for our present work is the ($q$-)Fock space.  Let $H$ denote a complex Hilbert space. The algebraic Fock space over $H$ is defined by
\[\mathcal{F}_{\mathrm{alg}}(H)=\bigoplus_{n=0}^{\infty} H^{\otimes n}\]
where we take the algebraic direct sum and we set $H^{\otimes 0} = \mathbb{C}\Omega$ with a unit vector $\Omega$.  (In other words, $\mathcal{F}_{\mathrm{alg}}(H)$ is the tensor algebra generated by $H$.)

Throughout the paper, fix $-1<q<1$.  In \cite{MR1105428}, Bo\.{z}ejko and Speicher introduced the $q$-inner product of $\xi_1\otimes \cdots \otimes \xi_m \in H^{\otimes m}$ and $\eta_1 \otimes \cdots \otimes \eta_n \in H^{\otimes n}$, using the boosted inner product on $H^{\otimes m}$ composed with a ``$q$-symmetrization''
\[\langle \xi_1\otimes \cdots \otimes \xi_m, \eta_1 \otimes \cdots \otimes \eta_n \rangle_q = \delta_{m,n} \langle P^{(m)}_q\xi_1\otimes \cdots \otimes \xi_m, \eta_1 \otimes \cdots \otimes \eta_n\rangle_{H^{\otimes m}} \]
where the operator $P^{(m)}_q = P^{(m)}$ on $H^{\otimes m}$ is defined by
\begin{align*} P^{(m)}(\xi_1\otimes \cdots \otimes \xi_m) &= \sum_{\pi \in S_m} q^{\mathrm{inv}(\pi)} \xi_{\pi(1)}\otimes \cdots \otimes \xi_{\pi(m)} \\
&= \sum_{\pi \in S_m} q^{\mathrm{inv}(\pi)} \xi_{\pi^{-1}(1)}\otimes \cdots \otimes \xi_{\pi^{-1}(m)}.\end{align*}
Here the sum is taken over the symmetric group $S_m$, and $\mathrm{inv}(\pi)$ is the number of inversions in the permutation $\pi$ (the number of pairs $i<j$ for which $\pi(i)>\pi(j)$).  Note that $\mathrm{inv}(\pi) = \mathrm{inv}(\pi^{-1})$ for all $\pi\in S_m$.  Bo\.zejko and Speicher proved that, for $-1<q<1$, $P^{(m)}_q$ is a strictly positive operator, and hence the $q$-inner product is indeed a non-degenerate inner product.  The completion of $\mathcal{F}_{\mathrm{alg}}(H)$ in this inner product is called the \emph{$q$-Fock space}.

For a vector $\xi \in H$, we define \emph{creation operator} $a(\xi)$ and \emph{annihilation operator} $a^*(\xi)$ by
\begin{align*}
    a(\xi)\eta_1\otimes \cdots \otimes \eta_n &= \xi \otimes \eta_1\otimes \cdots \otimes \eta_n, \\ a(\xi)\Omega&=\xi,\\
    a^*(\xi)\eta_1\otimes \cdots \otimes \eta_n&=\sum_{k=1}^n q^{k-1}\langle \eta_k ,\xi\rangle_H \eta_1\otimes \cdots \otimes \eta_{k-1}\otimes \eta_{k+1}\cdots \otimes \eta_n, \\ a^*(\xi)\eta&=\langle \eta ,\xi\rangle_H\Omega, \\
     a^*(\xi)\Omega&=0.
\end{align*}

\begin{rem} It is much more common to let $a(\xi)$ denote the annihilation operator, in which case its adjoint $a^\ast(\xi)$ is the creation operator, just based on the first letter of the word `annihilation'.  Our convention throughout this paper is the reverse. \end{rem}

The creation and annihilation operators satisfy the {\em $q$-commutation relations}:
\[a^*(\eta)a(\xi)-qa(\xi)a^*(\eta)=\langle \xi ,\eta \rangle_H I.\]

For our present purposes, we will restrict our attention to the finite dimensional (doubled) Hilbert space $H=\mathbb{C}^d\oplus \mathbb{C}^d=H_1\oplus H_2$.  It will often be convenient to work in a fixed orthonormal basis for $H$, which we assemble from an orthonormal basis $\{e_i\}_{i=1}^d$ for $H_1$ and a separate orthonormal basis $\{e_{\overline{i}}\}_{i=1}^d$ for $H_2$.  In the special case that $\xi$ is one of the basis vectors $e_i$ or $e_{\overline{i}}$, we denote the corresponding creation and annihilation operators simply as
\[ a(e_i) = a_i \qquad a(e_{\overline{i}}) = a_{\overline{i}}
\qquad a^\ast (e_i) = a_i^\ast \qquad a^\ast(e_{\overline{i}}) = a_{\overline{i}}^\ast.\]
The $q$-commutation relations can then be reduced to the form
\[ a_i^\ast a_j - q a_j a_i^\ast = \delta_{i,j} I. \]

\emph{The $q$-circular system} $(c_1,\ldots,c_d)$ is the $d$-tuple of operators on the $q$-Fock space $\mathcal{F}_q(H)$ defined by
\[c^{(q)}_i=a(e_i)+a(e_{\overline{i}})^*=a_i+a^*_{\overline{i}}.\]
 The von Neumann algebra $\mathrm{W}^*(c_1^{(q)},\ldots,c_d^{(q)})$ generated by a $q$-circular system admits a faithful tracial state $\tau(T)=\langle T\Omega, \Omega\rangle_q$, which naturally fits in the framework of non-commutative probability spaces. We define the $L^2$-norm $\|T\|_2$ of $T \in \mathrm{W}^*(c_1^{(q)},\ldots,c_d^{(q)})$ by
 \[\|T\|_2 =\tau(T^*T)^{\frac{1}{2}}=\|T \Omega\|_{\mathcal{F}_q(H)}\]
 where $\|\cdot \|_{\mathcal{F}_q(H)}$ denotes the norm on the Hilbert space $\mathcal{F}_q(H)$.

\begin{rem}
The joint moments of a $q$-circular system were computed by Mingo and Nica \cite[Definition 1.2.]{MR1871395}. They are characterized by pair partitions connecting $c_i$ with $c_i^*$, and count the number of crossings in such pairings.
    Note that the $q$-circular system $(c_1,\ldots,c_d)$ is not freely independent and each $c_i$ is not $\mathscr{R}$-diagonal. One can see this by checking 4th moments:
     \begin{align*}
     \tau(c_1^{(q)*}c_2^{(q)*}c_1^{(q)}c_2^{(q)})&=q\\
     \tau(c_1^{(q)*}c_1^{(q)*}c_1^{(q)}c_1^{(q)})&=1+q.
     \end{align*}
     The first equality proves $c_1,c_2$ are not freely independent, and the second equality shows $\kappa_4(c_1^*,c_1^*,c_1,c_1)=q$ which implies $c_1$ is not $\mathscr{R}$-diagonal and also not invariant under free complexification introduced in \cite[Definition 2.7]{MR2892462}.
\end{rem}

\label{ast.on.words}

We also consider the set of words $[d,\overline{d}]^*$ composed of finite strings of elements in $1,\ldots,d$ and $\overline{1},\ldots,\overline{d}$ and the empty word $\Omega$. For $w \in [d,\overline{d}]^*$, $|w|$ denotes the word length of $w$ and we set $|\Omega|=0$. We define $\overline{w}=\overline{w_1}\cdot \overline{w_2}\cdots \overline{w_n}$ for $w=w_1w_2\cdots w_n \in [d]^*$ and $w^*=w_nw_{n-1}\cdots w_1$ for $w=w_1w_2\cdots w_n \in [d,\overline{d}]^*$. 
In our convention, for each $w=w_1w_2\cdots w_n\in [d,\overline{d}]^*$ and a family of operators $\{T_i\}_{i\in [d,\overline{d}]}$, we write $T_w=T_{w_1}T_{w_2}\cdots T_{w_n}$ and $T_{\Omega}=1$. We set a linear basis $\{e_w\}_{w \in [d,\overline{d}]}$ in $\mathcal{F}_{\mathrm{alg}}(H)$ by $e_w=e_{w_1}\otimes \cdots \otimes e_{w_n}$ for $w=w_1\cdots w_n$ and $e_{\Omega}=\Omega$ (ergo there is no confusion with $\Omega$ doing double duty).

We identify $\pi \in S_m$ with the permutation operator on $H^{\otimes m} $ and the permutation of letters in a word $w$ as follows: we define an operator $\pi$ on $H^{\otimes m}$ by
\[\pi (\xi_1 \otimes \cdots \otimes \xi_m)=\xi_{\pi^{-1}(1)} \otimes \cdots \otimes \xi_{\pi^{-1}(1)},\]
and we define a permutation $\pi(w)$ of a word $w=w_1\cdots w_m$ by 
\[\pi(w)=w_{\pi^{-1}(1)}\cdots w_{\pi^{-1}(m)}, \]
which defines a left action of $S_m$ on $[d,\overline{d}]^\ast$. Note that we have \[P^{(m)}=\sum_{\pi\in S_m} q^{\mathrm{inv}(\pi)} \pi^{-1} =\sum_{\pi\in S_m} q^{\mathrm{inv}(\pi)} \pi.\]

Denote by $S_k \times S_{m-k}$ the subgroup of $S_m$ such that $\pi \in S_k \times S_{m-k}$ maps $\{1,\ldots,k\}$ to $\{1,\ldots,k\}$ and $\{k+1,\ldots,m\}$ to $\{k+1,\ldots,m\}$.  We then let
\[ \quotient{S_m}{S_k \times S_{m-k}} :=\text{ the left cosets of $S_k \times S_{m-k}$ in $S_m$.} \]
In this paper, we always take the unique representative of $\sigma \in \quotient{S_m}{S_k \times S_{m-k}}$ so that $\mathrm{inv}(\sigma)$ is minimal. Such permutations can be described as a permutation $\sigma \in S_m$ such that $\sigma(1)<\sigma(2)<\cdots<\sigma(k)$ and $\sigma(k+1)<\sigma(k+2)\ldots<\sigma(m)$. If we take $\pi \in S_m$, then we have the unique factorization $\pi = \sigma (\tau_1 \times \tau_2)$ where $\sigma \in \quotient{S_m}{S_k \times S_{m-k}}$ is a permutation such that for each $1\le i \le k$, $\sigma(i)$ is the $i$-th smallest number in $\{\pi(j)\}_{j=1}^k$ and for each $k+1\le i \le m$,     $\sigma(i)$ is the $(i-k)$-th smallest number in $\{\pi(j)\}_{j=k+1}^m$. Note that under this factorization, we have $\mathrm{inv}(\pi)=\mathrm{inv}(\sigma) + \mathrm{inv}(\tau_1\times \tau_2)=\mathrm{inv}(\sigma) + \mathrm{inv}(\tau_1) +\mathrm{inv}(\tau_2)$. For example, if we take
\[\pi = \begin{pmatrix}1&2&3&4&5&6&7&8\\ 4&6&3&1&8&2&7&5\end{pmatrix}\in S_8,\]
then we have the decomposition $\pi=\sigma(\tau_1\times \tau_2)$ with
\[\sigma= \begin{pmatrix}1&2&3&4&5&6&7&8\\ 3&4&6&1&2&5&7&8\end{pmatrix}\in \quotient{S_8}{S_3\times S_5},  \]
and
\[\tau_1 = \begin{pmatrix}
    1&2&3\\ 2&3&1
\end{pmatrix}\in S_3,\quad \tau_2 =\begin{pmatrix}
    4&5&6&7&8 \\ 4&8 &5&7&6
\end{pmatrix}\in S_5,\]
where we have $\mathrm{inv}(\pi)=13=7+2+4=\mathrm{inv}(\sigma) + \mathrm{inv}(\tau_1) +\mathrm{inv}(\tau_2)$.

\begin{lm}[Theorem 2.1 in \cite{MR1811255}]\label{NCbinom}
We define an operator $R_{k,n}$ on $H^{\otimes n}$ by
\[R_{k,n}=\sum_{\pi \in \quotient{S_n}{S_k \times S_{n-k}}}q^{\mathrm{inv}(\pi)} \pi.\]
Then we have
\[R_{k,n}(P^{(k)}\otimes P^{(n-k)})=P^{(n)}=(P^{(k)}\otimes P^{(n-k)})R_{k,n}^*,\]
and
\[\|R_{k,n}\| \le C_{|q|}\]
where 
\[ C_{|q|}^{-1}=\prod_{m=1}^\infty (1-|q|^m). \]
Moreover, we have
\[ P^{(n)}\le C_{|q|}(P^{(k)}\otimes P^{(n-k)}).\]

\end{lm}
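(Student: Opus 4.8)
The plan is to prove the three assertions in order, reducing everything to (i) the explicit coset factorization of $P^{(n)}$, which yields the two factorization identities at once, and (ii) a norm recursion for $R_{k,n}$, from which $\|R_{k,n}\|\le C_{|q|}$ follows, and then the order relation $P^{(n)}\le C_{|q|}(P^{(k)}\otimes P^{(n-k)})$ by a short spectral argument.

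\emph{Factorization identities.} Starting from $P^{(n)}=\sum_{\pi\in S_n}q^{\mathrm{inv}(\pi)}\pi$, I would use the unique factorization $\pi=\sigma(\tau_1\times\tau_2)$ recorded just before the statement, with $\sigma$ the minimal representative of the coset $\pi(S_k\times S_{n-k})$ and $(\tau_1,\tau_2)\in S_k\times S_{n-k}$, together with $\mathrm{inv}(\pi)=\mathrm{inv}(\sigma)+\mathrm{inv}(\tau_1)+\mathrm{inv}(\tau_2)$. Since $\pi\mapsto(\text{permutation operator on }H^{\otimes n})$ is multiplicative, and the operator attached to $\tau_1\times\tau_2$ is $\tau_1\otimes\tau_2$ under $H^{\otimes n}=H^{\otimes k}\otimes H^{\otimes(n-k)}$, summing over the three free parameters factors the sum as $R_{k,n}\,(P^{(k)}\otimes P^{(n-k)})$. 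The second identity follows by taking adjoints, using that permutation operators are unitary with $\pi^{\ast}=\pi^{-1}$, so $P^{(n)}$ and $P^{(k)}\otimes P^{(n-k)}$ are self-adjoint.

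\emph{Recursion and norm bound.} I would index the minimal representatives $\sigma_A\in S_n/(S_k\times S_{n-k})$ by the $k$-subsets $A=\{\sigma_A(1)<\cdots<\sigma_A(k)\}$ of $\{1,\dots,n\}$, and split $R_{k,n}=\sum_{|A|=k}q^{\mathrm{inv}(\sigma_A)}\sigma_A$ according to whether $n\in A$. If $n\notin A$, then $\sigma_A$ fixes the last tensor leg and restricts on the first $n-1$ legs to the minimal representative of $A$ in $S_{n-1}/(S_k\times S_{n-1-k})$ with unchanged inversion count, so that subsum is $R_{k,n-1}\otimes I_H$. If $n\in A$, then with $A'=A\setminus\{n\}$ one checks $\mathrm{inv}(\sigma_A)=(n-k)+\mathrm{inv}(\sigma_{A'})$ (with $\sigma_{A'}$ the representative of $A'$ in $S_{n-1}/(S_{k-1}\times S_{n-k})$) and, at the operator level, $\sigma_A=(\sigma_{A'}\otimes I_H)\,\gamma_{k,n}$, where $\gamma_{k,n}$ is the unitary permutation operator of the cycle $k\mapsto n\mapsto n-1\mapsto\cdots\mapsto k+1\mapsto k$; summing gives that subsum as $q^{\,n-k}\,(R_{k-1,n-1}\otimes I_H)\,\gamma_{k,n}$. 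Hence
\[ R_{k,n}=\bigl(R_{k,n-1}\otimes I_H\bigr)+q^{\,n-k}\bigl(R_{k-1,n-1}\otimes I_H\bigr)\gamma_{k,n}, \]
so $\|R_{k,n}\|\le\|R_{k,n-1}\|+|q|^{\,n-k}\|R_{k-1,n-1}\|$. Splitting instead on whether $1\in A$, or equivalently conjugating by the tensor-reversal unitary (which maps $R_{k,n}$ to $R_{n-k,n}$, since conjugation by the longest permutation preserves $\mathrm{inv}$ and matches the minimal representatives of the two coset spaces), gives the companion bound $\|R_{k,n}\|\le\|R_{k-1,n-1}\|+|q|^{k}\|R_{k,n-1}\|$ and the symmetry $\|R_{k,n}\|=\|R_{n-k,n}\|$. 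Starting from $\|R_{0,n}\|=\|R_{n,n}\|=1$, I would prove by induction on $n$ that $\|R_{k,n}\|\le\prod_{m=1}^{\min(k,n-k)}(1-|q|^m)^{-1}$: assuming (by the symmetry) $k\le n-k$, use the companion bound when $2k<n$ and the first bound when $2k=n$, in both cases combining the inductive estimates via $1+\frac{|q|^k}{1-|q|^k}=\frac{1}{1-|q|^k}$, respectively $(1+|q|^k)(1-|q|^k)\le1$, into $\prod_{m=1}^{k}(1-|q|^m)^{-1}$. Since each factor is $\ge1$, this is at most $\prod_{m=1}^{\infty}(1-|q|^m)^{-1}=C_{|q|}$.

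\emph{Order relation and the hard part.} For the final inequality, set $M=P^{(k)}\otimes P^{(n-k)}$, which is strictly positive (hence invertible) as recalled above. By the first identity $P^{(n)}M^{-1}=R_{k,n}$, so $M^{-1/2}P^{(n)}M^{-1/2}$ is strictly positive and is conjugate, via $M^{1/2}$, to $R_{k,n}$. A positive operator has norm equal to its spectral radius, and similarity preserves the spectrum, so $\|M^{-1/2}P^{(n)}M^{-1/2}\|$ equals the spectral radius of $R_{k,n}$, which is $\le\|R_{k,n}\|\le C_{|q|}$; this says exactly $P^{(n)}\le C_{|q|}M$. The main obstacle is the recursion: pinning down the operator identity $\sigma_A=(\sigma_{A'}\otimes I_H)\gamma_{k,n}$ and the count $\mathrm{inv}(\sigma_A)=(n-k)+\mathrm{inv}(\sigma_{A'})$ requires careful bookkeeping of how minimal coset representatives permute tensor legs, and the two-variable induction must be arranged so that both the $|q|^{\,n-k}$ and the $|q|^{k}$ forms of the recursion are available — which is precisely what produces the exponent $\min(k,n-k)$. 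The two factorization identities and the spectral step are short.
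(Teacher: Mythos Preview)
Your proposal is correct. The paper does not give its own proof of this lemma --- it is quoted from Bo\.{z}ejko's paper \cite{MR1811255} --- and the only hint offered is the remark that the factorization identity comes from the unique coset decomposition $\pi=\sigma(\tau_1\times\tau_2)$ with $\mathrm{inv}(\pi)=\mathrm{inv}(\sigma)+\mathrm{inv}(\tau_1)+\mathrm{inv}(\tau_2)$, which is exactly your first step. For the norm bound $\|R_{k,n}\|\le C_{|q|}$ and the order relation $P^{(n)}\le C_{|q|}(P^{(k)}\otimes P^{(n-k)})$ the paper provides no argument at all, so there is nothing to compare against; your recursion obtained by splitting the minimal representatives on the membership of $n$ (and its companion via the $w_0$-conjugation symmetry $\|R_{k,n}\|=\|R_{n-k,n}\|$), together with the two-case induction on $n$, is a clean and correct route to the product bound $\prod_{m=1}^{\min(k,n-k)}(1-|q|^m)^{-1}\le C_{|q|}$, and the spectral step --- noting that $M^{-1/2}P^{(n)}M^{-1/2}$ is positive and similar to $R_{k,n}$, hence has norm equal to the spectral radius of $R_{k,n}$ --- is exactly the right way to extract the operator inequality from the norm bound.
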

\begin{rem}
    In the one variable case, the operators $P^{(n)}$ and $R_{k,n}$ have the same roles as the $q$-factorial $[n]_q!$ and the $q$-binomial coefficient $\binom{n}{k}_q$. This lemma is the multivariable extension of $\binom{n}{k}_q[k]_q![n-k]_q!=[n]_q!$. The proof is based on the unique factorization of $\pi=\sigma (\tau_1\times \tau_2)$ with $\sigma \in \quotient{S_n}{S_k \times S_{n-k}}$ and $\tau_1\times \tau_2 \in S_k \times S_{n-k}$.
\end{rem}

The following lemma from \cite{MR1811255} bounds the operator norm of a homogeneous linear combination of creation or annihilation operators in terms of the $q$-Fock space norm of the comparable linear combination of basis words.  We will use it repeatedly to compensate for the extra terms in our expansions that arise because of the $q$-commutation relations instead of the freeness of the operators in the $q=0$ setting.

\begin{lm}[Theorem 2.2 in \cite{MR1811255}]\label{norm_of_creation}
\begin{align*}
    \left\|\sum_{\substack{w \in [d, \overline{d}],\\|w|=n}}\alpha_w a_w\right\|&\le C_{|q|}^{\frac{1}{2}}\left\|\sum_{\substack{w \in [d, \overline{d}],\\|w|=n}}\alpha_w e_w\right\|_{\mathcal{F}_q(H)} \\
    \left\|\sum_{\substack{w \in [d, \overline{d}],\\|w|=n}}\alpha_w a_w^*\right\|&\le C_{|q|}^{\frac{1}{2}}\left\|\sum_{\substack{w \in [d, \overline{d}],\\|w|=n}}\alpha_w e_w\right\|_{\mathcal{F}_q(H)} 
\end{align*}
\end{lm}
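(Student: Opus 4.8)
The plan is to recognize $T:=\sum_{|w|=n}\alpha_w a_w$ as ``left creation by the vector $\Psi:=\sum_{|w|=n}\alpha_w e_w\in H^{\otimes n}$'' and then feed this into the positivity estimate $P^{(n+m)}\le C_{|q|}(P^{(n)}\otimes P^{(m)})$ supplied by Lemma~\ref{NCbinom}. Indeed, since $a(\xi)\eta=\xi\otimes\eta$, we have $a_w e_v=e_{w_1}\otimes\cdots\otimes e_{w_n}\otimes e_v$, so that for every homogeneous $\xi\in H^{\otimes m}\subset\mathcal{F}_q(H)$,
\[
T\xi=\Psi\otimes\xi\in H^{\otimes(n+m)},
\]
the right-hand side being the algebraic tensor product (the $q$-structure living entirely in the inner product). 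Because each $a_w$ with $|w|=n$ raises tensor degree by exactly $n$, the operator $T$ carries $H^{\otimes m}$ into $H^{\otimes(n+m)}$ for every $m$; these homogeneous subspaces being mutually orthogonal in $\mathcal{F}_q(H)$, we get $\|T\|=\sup_{m\ge0}\|T|_{H^{\otimes m}}\|$, and it suffices to bound $\|T\xi\|_q$ for $\xi$ homogeneous of some degree $m$.

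For such $\xi$, straight from the definition of the $q$-inner product,
\[
\|T\xi\|_q^2=\|\Psi\otimes\xi\|_q^2=\big\langle P^{(n+m)}(\Psi\otimes\xi),\,\Psi\otimes\xi\big\rangle_{H^{\otimes(n+m)}}.
\]
Now apply Lemma~\ref{NCbinom} with its ``$n$'' taken to be $n+m$ and its ``$k$'' taken to be $n$, which gives $P^{(n+m)}\le C_{|q|}(P^{(n)}\otimes P^{(m)})$; since $(P^{(n)}\otimes P^{(m)})(\Psi\otimes\xi)=(P^{(n)}\Psi)\otimes(P^{(m)}\xi)$ and the boosted inner product splits over tensor legs,
\[
\|T\xi\|_q^2\le C_{|q|}\,\big\langle P^{(n)}\Psi,\Psi\big\rangle_{H^{\otimes n}}\big\langle P^{(m)}\xi,\xi\big\rangle_{H^{\otimes m}}=C_{|q|}\,\|\Psi\|_q^2\,\|\xi\|_q^2.
\]
Taking the supremum over unit homogeneous $\xi$ (and over $m$) gives $\|T\|\le C_{|q|}^{1/2}\|\Psi\|_q=C_{|q|}^{1/2}\big\|\sum_{|w|=n}\alpha_w e_w\big\|_{\mathcal{F}_q(H)}$, which is the first inequality.

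The second inequality follows by taking adjoints: $\big(\sum_{|w|=n}\alpha_w a_w^*\big)^*$ is again a homogeneous degree-$n$ linear combination of products of creation operators (with coefficients $\overline{\alpha_{w^*}}$), so the first inequality applies to it, and one then passes back using that complex conjugation of the coefficients and the word-reversal $w\mapsto w^*$ both act isometrically on $\mathcal{F}_q(H)$ --- the latter being implemented by the reversal $q$-unitary $w_0$ (the longest element of the symmetric group), which fixes each $P^{(m)}$ because $\mathrm{inv}(w_0\pi w_0)=\mathrm{inv}(\pi)$, and the former because the Gram matrix $(\langle e_u,e_v\rangle_q)$ is real.

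Given Lemma~\ref{NCbinom}, there is no serious obstacle: the whole weight of the argument sits in the positivity $P^{(n+m)}\le C_{|q|}(P^{(n)}\otimes P^{(m)})$. The only points requiring care are the (routine but essential) reduction to homogeneous vectors via the grading, the precise identification of $T$ with left creation by $\Psi$ so that the inner-product computation really yields the product $\|\Psi\|_q^2\|\xi\|_q^2$, and the small symmetry bookkeeping converting the creation estimate into the annihilation one.
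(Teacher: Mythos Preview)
Your proof is correct and follows precisely the approach of Bo\.{z}ejko's original argument in \cite{MR1811255}, which the paper simply cites without reproducing. The paper itself gives no proof of this lemma, but your argument---reduction to homogeneous vectors via the grading, then the key operator inequality $P^{(n+m)}\le C_{|q|}(P^{(n)}\otimes P^{(m)})$ from Lemma~\ref{NCbinom}---is exactly the method the authors invoke when they prove the closely related Corollary~\ref{norm_of_prodcuts}, where they write ``the proof is based on the same argument as in the proof of Proposition~2.1 in Bo\.{z}ejko's paper.'' Your handling of the annihilation case via the anti-unitary $\ast$ (word-reversal plus conjugation) is also in line with how the paper uses this isometry elsewhere.
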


\begin{cor}\label{norm_of_prodcuts}
    Let $I$ be a finite set and $m \in \mathbb{N}$. If $\{u_i\}_{i\in I}, 
    \{v_i\}_{i \in I} \subset [d,\overline{d}]^*$ satisfy $|u_i|=|u_{i'}|$ and $|v_i|=|v_{i'}|$ for any $i,i'\in I$, then we have,
\[ \left\|\sum_{i\in I}\alpha_i a_{u_i}a^*_{v_i}\right\|\le C_{|q|}\left\| \sum_{i\in I}\alpha_i e_{u_i}\otimes e_{v_i} \right\|_{\mathcal{F}_q(H)^{\otimes 2}}.\]
\end{cor}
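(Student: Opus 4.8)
Write $k$ for the common word length of the $u_i$ and $l$ for that of the $v_i$, and set
\[ \Psi \;=\; \sum_{i\in I}\alpha_i\, e_{u_i}\otimes e_{v_i}, \]
regarded as a vector in the $(k,l)$-graded subspace $(H^{\otimes k},\langle\cdot,\cdot\rangle_q)\otimes (H^{\otimes l},\langle\cdot,\cdot\rangle_q)$ of $\mathcal{F}_q(H)^{\otimes 2}$. After collecting coefficients $\beta_{u,v}=\sum_{i:\,u_i=u,\,v_i=v}\alpha_i$ we have $\sum_i\alpha_i a_{u_i}a_{v_i}^{*}=\sum_{|u|=k,\,|v|=l}\beta_{u,v}\,a_u a_v^{*}$ and $\Psi=\sum_{u,v}\beta_{u,v}\,e_u\otimes e_v$, so there is no loss in assuming $I$ is the set of all pairs $(u,v)$ with $|u|=k$, $|v|=l$. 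The plan is then to diagonalize the coefficient tensor $\beta$ and reduce the bound to two applications of a uniform version of Lemma~\ref{norm_of_creation}.

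First I would take the Schmidt decomposition of $\Psi$ \emph{with respect to the $q$-inner products}: $\Psi=\sum_{j=1}^{r}\sigma_j\,\xi_j\otimes\eta_j$ with $\sigma_j>0$, with $(\xi_j)_j$ $q$-orthonormal in $H^{\otimes k}$ and $(\eta_j)_j$ $q$-orthonormal in $H^{\otimes l}$, and $\sum_j\sigma_j^{2}=\|\Psi\|_{\mathcal{F}_q(H)^{\otimes 2}}^{2}$. For $\xi=\sum_{|u|=k}\widehat{\xi}(u)\,e_u\in H^{\otimes k}$ set $a(\xi)=\sum_u\widehat{\xi}(u)\,a_u$, so that $a(\xi)\zeta=\xi\otimes\zeta$ for every $\zeta\in\mathcal{F}_q(H)$ and $a(\xi)^{*}=\sum_u\overline{\widehat{\xi}(u)}\,a_u^{*}$. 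Matching Schmidt and monomial coefficients yields the factorized form
\[ \sum_{i\in I}\alpha_i\,a_{u_i}a_{v_i}^{*} \;=\; \sum_{j=1}^{r}\sigma_j\,a(\xi_j)\,a\!\left(\overline{\eta_j}\right)^{*}, \qquad \overline{\eta_j}:=\sum_{|v|=l}\overline{\widehat{\eta_j}(v)}\,e_v, \]
where, since $q$ is real, $P^{(l)}$ has a real matrix in the basis $\{e_v\}$, so conjugation is anti-unitary for $\langle\cdot,\cdot\rangle_q$ and $(\overline{\eta_j})_j$ is again $q$-orthonormal. Reading the right-hand side as $\Lambda_{\xi}\,D_{\sigma}\,\Lambda_{\overline{\eta}}^{*}$ — with $\Lambda_{\xi}\colon\bigoplus_{j=1}^{r}\mathcal{F}_q(H)\to\mathcal{F}_q(H)$, $(\psi_j)_j\mapsto\sum_j a(\xi_j)\psi_j=\sum_j\xi_j\otimes\psi_j$, likewise $\Lambda_{\overline{\eta}}$, and $D_{\sigma}(\psi_j)_j=(\sigma_j\psi_j)_j$ with $\|D_{\sigma}\|=\max_j\sigma_j\le(\sum_j\sigma_j^{2})^{1/2}=\|\Psi\|$ — it remains only to bound $\|\Lambda_{\xi}\|$ and $\|\Lambda_{\overline{\eta}}\|$.

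The heart of the argument, and the step I expect to carry the real content, is the claim that for \emph{any} $q$-orthonormal family $(\xi_j)_{j=1}^{r}$ in $H^{\otimes k}$ one has $\|\Lambda_{\xi}\|\le C_{|q|}^{1/2}$, uniformly in $r$; for $r=1$ this is exactly Lemma~\ref{norm_of_creation}, but the uniformity in $r$ is the genuinely new ingredient. I would prove it by splitting each $\psi_j=\sum_m\psi_j^{(m)}$ into homogeneous components: the partial sums $\sum_j\xi_j\otimes\psi_j^{(m)}$ lie in the mutually orthogonal summands $H^{\otimes(k+m)}$ of $\mathcal{F}_q(H)$, so $\|\Lambda_{\xi}(\psi_j)_j\|^{2}=\sum_m\|\sum_j\xi_j\otimes\psi_j^{(m)}\|^{2}$, and for each $m$ the estimate $P^{(k+m)}\le C_{|q|}\,(P^{(k)}\otimes P^{(m)})$ from Lemma~\ref{NCbinom}, together with $\langle\xi_i,\xi_j\rangle_q=\delta_{ij}$, gives $\|\sum_j\xi_j\otimes\psi_j^{(m)}\|^{2}\le C_{|q|}\sum_j\|\psi_j^{(m)}\|^{2}$; summing over $m$ yields $\|\Lambda_{\xi}(\psi_j)_j\|^{2}\le C_{|q|}\sum_j\|\psi_j\|^{2}$. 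Applying this to $\Lambda_{\xi}$ and $\Lambda_{\overline{\eta}}$ and combining,
\[ \Big\|\sum_{i\in I}\alpha_i\,a_{u_i}a_{v_i}^{*}\Big\| \le \|\Lambda_{\xi}\|\,\|D_{\sigma}\|\,\|\Lambda_{\overline{\eta}}^{*}\| \le C_{|q|}^{1/2}\cdot C_{|q|}^{1/2}\cdot\|\Psi\|_{\mathcal{F}_q(H)^{\otimes 2}} = C_{|q|}\,\|\Psi\|_{\mathcal{F}_q(H)^{\otimes 2}}, \]
which is the assertion. Beyond the key estimate, the only points needing care are bookkeeping: that the Schmidt decomposition must be taken for the $q$-inner products rather than the flat ones, and that conjugation preserves $q$-orthonormality so that $a_v^{*}$ (not $a_v$) appears on the column side.
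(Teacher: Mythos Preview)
Your proof is correct. Both your argument and the paper's rest on the same estimate $P^{(k+m)}\le C_{|q|}\,(P^{(k)}\otimes P^{(m)})$ from Lemma~\ref{NCbinom}, applied once on the creation side and once on the annihilation side; the difference is organizational. The paper applies the operator to a test vector $\xi=\sum_l\xi_l$, uses the inequality to peel off the $e_{u_i}$ factors, diagonalizes the $u$-coefficients against a $q$-orthonormal basis $\{f_s\}$ of $H^{\otimes k}$, and then invokes Lemma~\ref{norm_of_creation} on each remaining annihilation piece $\sum_i\beta(s,i)\,a_{v_i}^{*}$. You instead diagonalize both sides at once via the Schmidt decomposition of $\Psi$ and package the operator as the product $\Lambda_{\xi}\,D_{\sigma}\,\Lambda_{\overline{\eta}}^{*}$; your column bound $\|\Lambda_{\xi}\|\le C_{|q|}^{1/2}$ is precisely the computation behind Lemma~\ref{norm_of_creation}, now stated for a $q$-orthonormal family rather than a single symbol. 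The two routes are equivalent in content and produce the same constant $C_{|q|}$; your version is more symmetric and operator-theoretic, while the paper stays closer to Bo\.{z}ejko's original test-vector calculation in \cite{MR1811255}.
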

\begin{proof}
    The proof is based on the same argument as in the proof of Proposition 2.1 in Bo\.{z}ejko's paper \cite{MR1811255}. We take $\xi=\sum_{l}\xi_l$ in the algebraic Fock space with $\xi_l \in H^{\otimes l}$. Since $|u_i|=|u_{i'}|$ and $|v_i|=|v_{i'}|$ for any $i,i'\in I$, $\sum_{i\in I}\alpha_i a_{u_i}a^*_{v_i}\xi_l$ is orthogonal to $\sum_{i\in I}\alpha_i a_{u_i}a^*_{v_i}\xi_{l'}$ if $l \neq l'$. By using $P^{(l+|u_i|-|v_i|)}\le C_{|q|} P^{(|u_i|)}\otimes P^{(l-|v_i|)}$, we have 
    \begin{align*}
        \left\|\sum_{i\in I}\alpha_i a_{u_i}a^*_{v_i}\xi\right\|^2_{\mathcal{F}_q(H)} &= \sum_{l}\left\|\sum_{i\in I}\alpha_i a_{u_i}a^*_{v_i}\xi_l\right\|^2_{\mathcal{F}_q(H)}\\
        &\le C_{|q|}\sum_{l}\sum_{i_1,i_2\in I} \alpha_{i_1}\overline{\alpha_{i_2}}\langle e_{u_{i_1}}, e_{u_{i_2}}\rangle_q \left\langle a^*_{v_{i_1}}\xi_l,a^*_{v_{i_2}}\xi_l\right\rangle_q.
    \end{align*}
    We take the orthogonal decomposition of $\alpha_i e_{u_i}=\sum_{s}\beta(s,i)f_s$ for some orthonormal basis of $H^{\otimes |u_i|}$ with respect to the $q$-inner product. Then, we have
    \begin{align*}
        \left\|\sum_{i\in I}\alpha_i a_{u_i}a^*_{v_i}\xi\right\|^2_{\mathcal{F}_q(H)} &\le C_{|q|}\sum_{l}\sum_{i_1,i_2\in I} \alpha_{i_1}\overline{\alpha_{i_2}}\langle e_{u_{i_1}}, e_{u_{i_2}}\rangle_q \left\langle a^*_{v_{i_1}}\xi_l,a^*_{v_{i_2}}\xi_l\right\rangle_q\\
         &= C_{|q|}\sum_{l}\sum_{s}\sum_{i_1,i_2\in I} \beta(s,i_1)\overline{\beta(s,i_2)}\left\langle a^*_{v_{i_1}}\xi_l,a^*_{v_{i_2}}\xi_l\right\rangle_q\\
         &=C_{|q|}\sum_{l}\sum_{s}\left \|\sum_{i\in I} \beta(s,i) a_{v_i}^*\xi_l\right\|^2_{\mathcal{F}_q(H)}
         \end{align*}
Since $|v_i|=|v_{i'}|$ for any $i,i'\in I$, $\sum_{i\in I} \beta(s,i) a_{v_i}^*\xi_l$ is orthogonal to $\sum_{i\in I} \beta(s,i) a_{v_i}^*\xi_{l'}$ is $l\neq l'$. This implies $\sum_{l}\left \|\sum_{i\in I} \beta(s,i) a_{v_i}^*\xi_l\right\|^2_{\mathcal{F}_q(H)} =\left \|\sum_{i\in I} \beta(s,i) a_{v_i}^*\xi\right\|^2_{\mathcal{F}_q(H)}$. Then, by applying Lemma \ref{norm_of_creation}, we have
         \begin{align*}
         C_{|q|}\sum_{s}\left \|\sum_{i\in I} \beta(s,i) a_{v_i}^*\xi\right\|^2_{\mathcal{F}_q(H)}
         &\le C_{|q|}^{2}\sum_{s}\left \|\sum_{i\in I} \beta(s,i) e_{v_i}\right\|^2_{\mathcal{F}_q(H)} \|\xi\|_{\mathcal{F}_q(H)}^2 \\
         &=  C_{|q|}^{2} \left\| \sum_{i\in I}\alpha_i e_{u_i}\otimes e_{v_i}\right\|_{\mathcal{F}_q(H)^{\otimes 2}}^2 \|\xi\|_{\mathcal{F}_q(H)}^2  \end{align*}
     where we use the definition of $\beta(s,i)$ in the last inequality.
\end{proof}

We now introduce some notation and constructions that will aid in the readability of the following computations.

\begin{dfn} \label{definition.star} Define $\ast\colon H\to H$, $\xi\mapsto\xi^\ast$ to be the unique sesquilinear map satisfying $e_i^\ast = e_i$ for each basis vector $e_i$; i.e. $(\sum_i \alpha_i e_i)^\ast = \sum_i \bar\alpha_i e_i$.  We then extend $\ast$ to a sesquilinear map on $H^{\otimes n}$ for each $n\in\mathbb{N}$ by
\begin{equation} \label{eq:star.skew.linear} (\xi_1\otimes \xi_2\otimes  \cdots \otimes \xi_n)^\ast=\xi_n^\ast\otimes \xi_{n-1}^\ast\otimes \cdots \otimes \xi_1^\ast.  \end{equation}
For each $n \in \mathbb{N}$, we consider linear maps $a_{n}$ and $a^*_n$ from $H^{\otimes n}$ to $B(\mathcal{F}_q(H))$ defined by
\begin{equation}
\begin{aligned} \label{eq:an.an*}
    a_n(\xi_1\otimes \xi_2\otimes \cdots \otimes \xi_n)&=a(\xi_1)a(\xi_2)\cdots a(\xi_n),\\
    a^*_n(\xi_1\otimes \xi_2\otimes \cdots \otimes \xi_n)&=a^*(\xi_1^*)a^*(\xi_2^*)\cdots a^*(\xi_n^*)
\end{aligned}
\end{equation}

Let $M_n$ denote the multiplication operator which is a linear map from the algebraic tensor product $B(\mathcal{F}_q(H))^{\otimes n}$ to $B(\mathcal{F}_q(H))$ defined by
\[ M_n(a_1\otimes a_2 \otimes \cdots \otimes a_n)=a_1a_2\cdots a_n. \]
We omit the index $n$ of $M_n$ and simply write $M$.
\end{dfn}

\begin{rem} Comparing \eqref{eq:star.skew.linear} and \eqref{eq:an.an*}, the reader may find it counterintuitive that $a_n^\ast(\xi_1\otimes\cdots\otimes\xi_n) \ne [a_n(\xi_1\otimes\cdots\otimes\xi_n)]^\ast$.  Indeed, what holds with these conventions is
\begin{align*} [a_n(\xi_1\otimes\xi_2\otimes\cdots\otimes\xi_n)]^\ast = a^\ast(\xi_n)\cdots a^\ast(\xi_2) a^\ast(\xi_1) 
&= a_n^\ast (\xi_n^\ast \otimes\cdots\otimes\xi_2^\ast \otimes\xi_1^\ast) \\
&= a_n^\ast((\xi_1\otimes\xi_2\otimes\cdots\otimes\xi_n)^\ast).
\end{align*}
Thus, we have for $\psi\in H^{\otimes n}$,
\[[a_n(\psi)]^\ast = a_n^\ast(\psi^\ast). \]

(The relation $[a_n(\psi)]^\ast = a_n^\ast(\psi^\ast)$ for $\psi\in H^{\otimes n}$ holds even for $n=1$, directly from \eqref{eq:an.an*}.  In particular, it is important to note that the two maps $a(\cdot),a_1(\cdot)\colon H\to B(\mathcal{F}_q(H))$ are not equal, although the two do agree on ``real'' vectors, i.e.\ when $\xi=\xi^\ast$.)
This convention will be more convenient for the ordered products considered in all our computations to follow. \end{rem}

\begin{rem} We also use $\ast$ to denote the operation on words in $[d,\bar{d}]^\ast$ introduced on page \pageref{ast.on.words}.  The two uses are in fact consistent.  Since the unit vectors $e_i$ are (by definition) selfadjoint, $e_i^\ast = e_i$, it follows that
\[ e_{w}^\ast = (e_{w_1}\otimes\cdots\otimes e_{w_n})^\ast = e_{w_n}\otimes\cdots\otimes e_{w_1} = e_{w^\ast}. \]
\end{rem}

With the above notations, we can reformulate Lemma \ref{norm_of_prodcuts} as follows (a similar formulation also appears in \cite[Page 23]{MR2091676}).

\begin{cor}\label{generalproduct}
   For $\xi \in H^{\otimes n}\otimes H^{\otimes m}$, we have
       \[\|M(a_n\otimes a^*_m)\xi \| \le C_{|q|}\| \xi \|_{\mathcal{F}_q(H)^{\otimes 2}}\]
       where we embed $H^{\otimes n}\otimes H^{\otimes m}$ into $\mathcal{F}_q(H)^{\otimes 2} $.
\end{cor}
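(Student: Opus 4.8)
The plan is to deduce Corollary~\ref{generalproduct} directly from Corollary~\ref{norm_of_prodcuts} by decomposing a general tensor $\xi\in H^{\otimes n}\otimes H^{\otimes m}$ into basis words. Write $\xi=\sum_{|u|=n,\,|v|=m}\alpha_{u,v}\,e_u\otimes e_v$, where the sum runs over $u\in[d,\overline d]^\ast$ with $|u|=n$ and $v\in[d,\overline d]^\ast$ with $|v|=m$; this is possible since the $\{e_w\}$ form a linear basis of the algebraic Fock space. Then $a_n(e_u)=a(e_{u_1})\cdots a(e_{u_n})=a_u$ and, by \eqref{eq:an.an*} together with $e_i^\ast=e_i$, $a_m^\ast(e_v)=a^\ast(e_{v_1})\cdots a^\ast(e_{v_m})=a^\ast_v$, so that $M(a_n\otimes a_m^\ast)(e_u\otimes e_v)=a_u a_v^\ast$. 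By linearity, $M(a_n\otimes a_m^\ast)\xi=\sum_{u,v}\alpha_{u,v}\,a_u a_v^\ast$.

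Now I would simply apply Corollary~\ref{norm_of_prodcuts} with index set $I=\{(u,v):|u|=n,\,|v|=m\}$, which satisfies the hypotheses $|u_i|=|u_{i'}|=n$ and $|v_i|=|v_{i'}|=m$ uniformly. This yields
\[
\left\|\sum_{u,v}\alpha_{u,v}\,a_u a_v^\ast\right\|\le C_{|q|}\left\|\sum_{u,v}\alpha_{u,v}\,e_u\otimes e_v\right\|_{\mathcal{F}_q(H)^{\otimes 2}}=C_{|q|}\|\xi\|_{\mathcal{F}_q(H)^{\otimes 2}},
\]
which is exactly the claimed estimate once we note the final norm is the $q$-Fock norm of $\xi$ under the embedding $H^{\otimes n}\otimes H^{\otimes m}\hookrightarrow\mathcal{F}_q(H)^{\otimes 2}$.

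The only subtlety — and the step I would be most careful about — is bookkeeping with the $\ast$ conventions. Corollary~\ref{norm_of_prodcuts} is phrased in terms of products $a_{u_i}a_{v_i}^\ast$ with $a_{v_i}^\ast=a^\ast(e_{v_{i,1}})\cdots a^\ast(e_{v_{i,|v_i|}})$, i.e.\ ordered left-to-right; meanwhile Definition~\ref{definition.star} defines $a_m^\ast(\xi_1\otimes\cdots\otimes\xi_m)=a^\ast(\xi_1^\ast)\cdots a^\ast(\xi_m^\ast)$. On basis words these agree precisely because $e_i^\ast=e_i$, so $a_m^\ast(e_{v_1}\otimes\cdots\otimes e_{v_m})=a^\ast(e_{v_1})\cdots a^\ast(e_{v_m})=a_v^\ast$ in the notation of Corollary~\ref{norm_of_prodcuts}. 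Once this identification is made the rest is immediate, so this is genuinely a short reformulation rather than a new argument; I would keep the proof to essentially two or three lines.
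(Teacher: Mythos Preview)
Your proof is correct and is precisely the reformulation the paper has in mind: the paper does not give a separate argument for Corollary~\ref{generalproduct} but simply states that, with the notation of Definition~\ref{definition.star}, Corollary~\ref{norm_of_prodcuts} can be restated in this form. Your expansion of $\xi$ in the $\{e_u\otimes e_v\}$ basis and the identification $M(a_n\otimes a_m^\ast)(e_u\otimes e_v)=a_u a_v^\ast$ (using $e_i^\ast=e_i$) are exactly the bookkeeping needed to make that sentence rigorous, and your care with the $\ast$-convention is warranted and correctly handled.
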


The following lemma gives the expansion of polynomials in the $q$-circular system with respect to the creation and annihilation operators, which is often useful to estimate operator norms.
\begin{lm}\label{qCircular}
For each $w=w_1\cdots w_n \in [d]^*$, we have
\begin{align*}
c_w^{(q)} &= \sum_{k=0}^n \sum_{\pi \in \quotient{S_n}{S_k \times S_{n-k}}} q^{\mathrm{inv}(\pi)} a_{\pi^{-1}(w)_{\le k}}a^*_{\overline{\pi^{-1}(w)_{>k}}}\\
&= \sum_{k=0}^n M(a_{k}\otimes a^*_{n-k})(I_{k}\otimes \overline{I}_{n-k})R^*_{k,n} e_w
\end{align*}
where $w_{\le k}=w_1\cdots w_k $ and $w_{>k}=w_{k+1}\cdots w_n$ for $k \le n$ and $\overline{I}_{n-k}$ is a linear map from $H_1^{\otimes n-k} $ to $H_2^{\otimes n-k}$ defined by $\overline{I}_{n-k} e_w=e_{\overline{w}}$.
As a consequence, we have
  \begin{align*}
  \sum_{|w|=n} \alpha_w c_w^{(q)} &= \sum_{k=0}^n \sum_{|w|=n} \sum_{\pi \in \quotient{S_n}{S_k \times S_{n-k}}} \alpha_{w}\ q^{\mathrm{inv}(\pi)} a_{\pi^{-1}(w)_{\le k}}a^*_{\overline{\pi^{-1}(w)_{>k}}}\\
  &=\sum_{k=0}^n M(a_k\otimes a^*_{n-k}) (I_k\otimes \overline{I}_{n-k})R^*_{k,n} \left(\sum_{|w|=n} \alpha_w e_w\right)
  \end{align*}
  and $\|\sum_{|w|=n} \alpha_w c_w^{(q)}\|_2= \|\sum_{|w|=n} \alpha_w e_w\|_{\mathcal{F}_q(H)}$.
\end{lm}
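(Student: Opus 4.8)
\textbf{Proof plan for Lemma \ref{qCircular}.}

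The plan is to prove the single-word identity $c_w^{(q)} = \sum_{k=0}^n \sum_{\pi \in \quotient{S_n}{S_k \times S_{n-k}}} q^{\mathrm{inv}(\pi)} a_{\pi^{-1}(w)_{\le k}}a^*_{\overline{\pi^{-1}(w)_{>k}}}$ by induction on $n=|w|$, since everything else follows formally from it. The base case $n=1$ is just the definition $c_i^{(q)} = a_i + a_{\overline i}^* = a(e_i) + a^*(e_{\overline i})$, which matches the right-hand side ($k=0$ gives $a_{\overline i}^*$ and $k=1$ gives $a_i$, with the only coset representative being the identity). For the inductive step, I would write $c_w^{(q)} = c_{w_1\cdots w_{n-1}}^{(q)}\, c_{w_n}^{(q)}$ and expand using the inductive hypothesis on the first factor and the definition $c_{w_n}^{(q)} = a_{w_n} + a_{\overline{w_n}}^*$. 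This produces four families of terms of the shape $a_u (\text{one more creation or annihilation}) a_{\overline v}^*$; the crux is to re-normal-order the middle operator past the trailing annihilation block $a_{\overline v}^*$ using the $q$-commutation relation $a_i^\ast a_j - q a_j a_i^\ast = \delta_{i,j}I$. Because the creation indices $w_1,\dots,w_{n-1}$ live in $H_1$ and the annihilation indices $\overline{\cdots}$ live in $H_2$, the pairing Kronecker deltas $\langle e_i, e_{\overline j}\rangle = 0$ always vanish, so the only nontrivial effect of moving $a(e_{w_n})$ to the left past $a^\ast$-blocks is the accumulation of factors of $q$ — precisely one factor of $q$ for each annihilator it jumps over. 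Tracking this bookkeeping is what converts the ``insert $w_n$ into position'' operation into the coset sum with the weight $q^{\mathrm{inv}}$: appending an index and then sliding it left by $j$ slots is exactly the coset-representative recursion for $\quotient{S_n}{S_k\times S_{n-k}}$, with $\mathrm{inv}$ increasing by $j$ at each stage.

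The cleanest way to organize this is probably to avoid the induction on words entirely and instead argue directly through the second displayed form, $c_w^{(q)} = \sum_{k=0}^n M(a_{k}\otimes a^*_{n-k})(I_{k}\otimes \overline{I}_{n-k})R^*_{k,n} e_w$, using the factorization property $R_{k,n}(P^{(k)}\otimes P^{(n-k)}) = P^{(n)} = (P^{(k)}\otimes P^{(n-k)})R_{k,n}^*$ from Lemma \ref{NCbinom}. Concretely: I would compute $c_w^{(q)}\Omega$ and more generally the action of $c_w^{(q)}$ on an arbitrary basis vector $e_v$ of the $q$-Fock space, and show it agrees with the claimed operator applied to $e_v$. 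Expanding $c_w^{(q)} = (a_{w_1} + a_{\overline{w_1}}^*)\cdots(a_{w_n}+a_{\overline{w_n}}^*)$ and using the $q$-commutation relations to push all creations to the left and all annihilations to the right, the coefficient of each resulting normally-ordered monomial $a_{w_{i_1}}\cdots a_{w_{i_k}} a^*_{\overline{w_{j_1}}}\cdots a^*_{\overline{w_{j_{n-k}}}}$ is a sum of powers of $q$ over the ways the chosen creation-slots and annihilation-slots interleave in the original order — exactly $\sum_{\pi} q^{\mathrm{inv}(\pi)}$ over the shuffle class, which is the definition of the coset sum defining $R_{k,n}$. The identification $P^{(n)} = (P^{(k)}\otimes P^{(n-k)})R_{k,n}^*$ is what makes the $R^*_{k,n} e_w$ form legitimate on the $q$-Fock space (as opposed to the algebraic Fock space).

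For the consequences: summing the single-word identity against coefficients $\alpha_w$ is immediate by linearity. The $L^2$-norm identity $\|\sum_{|w|=n}\alpha_w c_w^{(q)}\|_2 = \|\sum_{|w|=n}\alpha_w e_w\|_{\mathcal{F}_q(H)}$ follows by applying the operator to $\Omega$: only the $k=n$ term survives ($a^*_{n-k}$ kills $\Omega$ unless $n-k=0$, and $a_n(\xi)\Omega$ recovers $\xi$), giving $\big(\sum_w \alpha_w c_w^{(q)}\big)\Omega = \sum_w \alpha_w e_w$, and then $\|T\|_2 = \|T\Omega\|_{\mathcal{F}_q(H)}$ by the definition of the trace $\tau(T) = \langle T\Omega,\Omega\rangle_q$; one also needs that distinct words $w$ of the same length give linearly independent (indeed $q$-orthogonal up to the positive operator $P^{(n)}$) vectors $e_w$, which is built into the formula. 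I expect the main obstacle to be purely combinatorial: setting up the normal-ordering expansion so that the $q$-weights provably assemble into $\mathrm{inv}(\pi)$ over the correct coset, and in particular being careful that $\mathrm{inv}$ is computed for the minimal-length coset representative and that no spurious $\delta$-terms appear from the commutation relations — which, as noted, is guaranteed here only because the creation alphabet ($H_1$) and annihilation alphabet ($H_2$) are orthogonal.
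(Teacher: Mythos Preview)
Your direct-expansion approach in the second paragraph --- expand $c_w^{(q)}=\prod_{i=1}^n(a_{w_i}+a^*_{\overline{w_i}})$, normal-order each of the $2^n$ terms using $a^*_{\overline i}a_j = q\,a_ja^*_{\overline i}$ (no $\delta$-terms since $H_1\perp H_2$), and identify the resulting $q$-exponent as $\mathrm{inv}(\pi)$ for the unique shuffle $\pi\in\quotient{S_n}{S_k\times S_{n-k}}$ determined by the choice of creation positions --- is exactly the paper's proof, as is your derivation of the $L^2$-identity via $c_w^{(q)}\Omega=e_w$. The induction on $n$ you sketch first would also work but is unnecessary, and one small slip: each choice of creation/annihilation slots yields a \emph{single} power $q^{\mathrm{inv}(\pi)}$, not a sum over a shuffle class (the bijection is between $\epsilon$-sequences and coset representatives, one-to-one).
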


\begin{proof}
    We expand the product $c_w^{(q)}=\prod_{i=1}^n (a_{w_i}+a^*_{\overline{w_i}})$ to obtain the desired formula. The summation  index $k$ counts how many $a_{w_i}$ we pick from the product. When $k$ is fixed, each term $a_{w_1}^{\epsilon_1}\ldots a_{w_n}^{\epsilon_n}$ (where $\epsilon=\pm 1 $ and $a_{w_i}^{+1}= a_{w_i}$ and $a_{w_i}^{-1}=a^*_{\overline{w_i}}$) in the expansion of $c_w^{(q)}$ is associated with a permutation $\pi \in \quotient{S_n}{S_k \times S_{n-k}}$ in the following way; for each $1\le i \le k$, $\pi(i)$ is the $i$-th smallest number in $\{j \ | \ \epsilon_j=+1\}$, and for each $k+1\le i \le n$, $\pi(i)$ is the $(i-k)$-th smallest number in $\{j \ |\  \epsilon_j=-1\}$. Then we rearrange the product $a_{w_1}^{\epsilon_1}\ldots a_{w_n}^{\epsilon_n}$ in the form $a_{w_{i_1}}\cdots a_{w_{
    i_k}}a_{\overline{w_{i_{k+1}}}}^*\cdots a_{\overline{w_{i_{n}}}}^*$ by using the $q$-commutation relation:
    \[a_{\overline{i}}^*a_{j}=qa_{j}a_{\overline{i}}^*.\]
    By definition, we can see 
    \[a_{w_{i_1}}\cdots a_{w_{
    i_k}}a_{\overline{w_{i_{k+1}}}}^*\cdots a_{\overline{w_{i_{n}}}}^*=a_{\pi^{-1}(w)_{\le k}}a^*_{\overline{\pi^{-1}(w)_{>k}}}, \]
    and the number of $q$ appearing by this rearrangement is exactly $\mathrm{inv}(\pi)$. Therefore, we have
    \[a_{w_1}^{\epsilon_1}\ldots a_{w_n}^{\epsilon_n}=q^{\mathrm{inv}(\pi)} a_{\pi^{-1}(w)_{\le k}}a^*_{\overline{\pi^{-1}(w)_{>k}}}.\]
    Note that by the formula of $c_w^{(q)}$, we have
    \[c_w^{(q)}\Omega=e_w.\]
    Thus, we have $\|\sum_{|w|=n} \alpha_w c_w^{(q)}\|_2= \|\sum_{|w|=n} \alpha_w e_w\|_{\mathcal{F}_q(H)}$.
\end{proof}
\begin{rem}\label{selfadjointanalog}
The formula in Lemma \ref{qCircular} has a similar form to the formula for $q$-Wick polynomials. The $q$-Gausssian system is given by 
\[X_i^{(q)} = a_i + a_i^*, \quad i \in [d].\]
    There is an isomorphism $D$ between the $q$-Fock space $\mathcal{F}_q(\mathbb{C}^d)$ and GNS Hilbert space $L^2(\mathbb{C}\langle X_1^{(q)},\cdots,X_d^{(q)}\rangle, \tau)$ such that $D(T\Omega)=T$ for $T \in \mathbb{C}\langle X_1^{(q)},\ldots,X_d^{(q)}\rangle$. For $e_w \in \mathcal{F}_q(\mathbb{C}^d)$ ($w \in [d]^*$), $De_w$ is a polynomial in $X_1^{(q)},\ldots,X_d^{(q)}$ determined by the recursion, and its expansion in terms of the creation and annihilation operators (\cite[Proposition 1.1]{MR1811255}) is given by
    \[De_w(X_1^{(q)},\ldots,X_d^{(q)})=\sum_{k=0}^n \sum_{\pi \in \quotient{S_n}{S_k \times S_{n-k}}} q^{\mathrm{inv}(\pi)} a_{\pi^{-1}(w)_{\le k}}a^*_{\pi^{-1}(w)_{>k}}.\]
\end{rem}
The following lemma is important for the proof of strong Haagerup inequality.
\begin{lm}\label{AnnihilationCreation}
Let $u, v \in [d]^*$ with $|v|=k$ and $|u|=l$. Then we have,
\begin{align*}
(a_v)^*a_u=\sum_{m=0}^{\min(k,l)} q^{(k-m)(l-m)}&\sum_{\pi_1 \in \quotient{S_k}{S_{k-m} \times S_{m}}}\sum_{\pi_2 \in \quotient{S_l}{S_{m} \times S_{l-m}}}  q^{\mathrm{inv}(\pi_1)+\mathrm{inv}(\pi_2)}\\
&\cdot \langle  P^{(m)}e_{[\pi_1^{-1} (v^*)_{>k-m}]^*},e_{\pi_2^{-1}(u)_{\le m}} \rangle 
a_{\pi_2^{-1}(u)_{>m}} a_{\pi_1^{-1} (v^*)_{\le k-m}}^*.
\end{align*}
\end{lm}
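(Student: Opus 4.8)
\emph{Approach.} The identity is a $q$-deformed normal-ordering (Wick) formula, and the engine behind it is the defining relation $a_i^\ast a_j = q\,a_j a_i^\ast + \delta_{i,j}I$, used repeatedly. I would prove it by induction on $l=|u|$, with the statement taken for all words $v$ simultaneously. The base case $l\le 1$ is elementary: for $l=0$ both sides equal $(a_v)^\ast$, and for $l=1$ the asserted identity reduces, using $(a_v)^\ast = a^\ast(e_{v_k})\cdots a^\ast(e_{v_1})$, to
\[
(a_v)^\ast\,a(e_{u_1}) \;=\; q^{\,k}\,a(e_{u_1})\,(a_v)^\ast \;+\; \sum_{p\,:\,v_p=u_1} q^{\,p-1}\,(a_{v^{\setminus p}})^\ast,
\]
where $v^{\setminus p}$ denotes $v$ with its $p$-th letter deleted. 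This is obtained by pushing the single creation operator $a(e_{u_1})$ leftward through the $k$ annihilation operators, one factor at a time, each crossing contributing either a factor $q$ (the swap branch of the $q$-commutation relation) or a contraction $\delta_{v_p,u_1}$ (the contraction branch, which removes both operators).

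\emph{Inductive step.} Write $u=u_1u''$ with $|u''|=l-1$ and apply the displayed identity to the first letter of $u$:
\[
(a_v)^\ast a_u \;=\; q^{\,k}\,a(e_{u_1})\bigl[(a_v)^\ast a_{u''}\bigr] \;+\; \sum_{p\,:\,v_p=u_1} q^{\,p-1}\bigl[(a_{v^{\setminus p}})^\ast a_{u''}\bigr],
\]
and expand each bracket by the induction hypothesis (both brackets have $|u''|=l-1$). In the first summand, premultiplication by $a(e_{u_1})$ merely prepends $e_{u_1}$ to the creation word of every resulting term, introducing no crossings beyond the explicit factor $q^{\,k}$; this should reproduce exactly those terms of the target $l$-formula in which $u_1$ is \emph{uncontracted} (so the set of contracted $u$-letters is unchanged, $\pi_1$ is unchanged, and $m$ is unchanged). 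In the second summand, the contraction $v_p\leftrightarrow u_1$ is the newly created contraction, so the contraction count $m'$ of the $(l{-}1)$-formula becomes $m=m'+1$; this should reproduce the terms in which $u_1$ is \emph{contracted}. What remains, in each case, is to match the combinatorial data of the two sides of the identification and to check that the powers of $q$ agree.

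\emph{Main obstacle.} This last reconciliation is the real content. For the first summand one checks that inserting the index $1$ into the coset representative $\pi_2$ as an uncontracted position raises $\mathrm{inv}(\pi_2)$ by exactly $m$, and that this increment together with the explicit factor $q^{\,k}$ exactly compensates the difference between the target exponent $(k-m)(l-m)$ and the $(k-m)(l-1-m)$ supplied by the induction hypothesis (the inner-product factor being literally unchanged). For the second summand one must see that the factor $q^{\,p-1}$ is absorbed precisely by (i) the increase in $\mathrm{inv}(\pi_1)$ upon inserting the index $p$ into the $S_k/(S_{k-m}\times S_m)$-representative and (ii) the passage from the $P^{(m')}$-inner-product of the induction hypothesis to the $P^{(m)}$-inner-product of the target — i.e.\ the $k=1$ instance of Lemma \ref{NCbinom}, which rebuilds $P^{(m)}$ from $P^{(m-1)}$ via $R_{1,m}$, used together with the additivity $\mathrm{inv}(\pi)=\mathrm{inv}(\sigma)+\mathrm{inv}(\tau_1)+\mathrm{inv}(\tau_2)$ under the factorization $\pi=\sigma(\tau_1\times\tau_2)$. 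Once these exponent-bookkeeping identities are in hand, assembling the two summands yields the claimed formula, completing the induction. Everything apart from this bookkeeping is routine.
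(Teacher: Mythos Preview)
Your approach is correct and essentially the same as the paper's: both argue by induction using the $q$-commutation relation to push one operator through at a time, splitting into an uncontracted branch and a contracted branch and tracking the coset representatives and inversion counts. The only differences are organizational: the paper inducts on $k=|v|$ rather than on $l=|u|$ (peeling one annihilation operator off the left via $(a_{vi})^\ast a_u = a_i^\ast (a_v)^\ast a_u$, after first reducing to $k\le l$ by an adjoint argument), and in the contracted branch it expands $P^{(m)}$ explicitly as $\sum_{\sigma\in S_m} q^{\mathrm{inv}(\sigma)}\sigma$ and carries the extra index $\sigma$ through the bijection, rather than appealing to Lemma~\ref{NCbinom} to reassemble $P^{(m)}$ from $P^{(m-1)}$.
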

\begin{rem}
    In the one-variable case ($d=1$), our formula implies
    \[a^{*k}a^l=\sum_{m=0}^{\min(k,l)} q^{(k-m)(l-m)}\binom{k}{k-m}_q\binom{l}{m}_q [m]_q! \ a^{l-m}a^{*k-m}.\]
\end{rem}

\begin{proof}
    We may assume $k \le l$ since if $l < k$, we have
\begin{align*}
(a_u)^*a_v=\sum_{m=0}^{l} q^{(k-m)(l-m)}&\sum_{\pi_2 \in \quotient{S_l}{S_{l-m} \times S_{m}}}  \sum_{\pi_1 \in \quotient{S_k}{S_{m} \times S_{k-m}}}q^{\mathrm{inv}(\pi_1)+\mathrm{inv}(\pi_2)}\\
&\cdot \langle  P^{(m)}e_{[\pi_2^{-1} (u^*)_{>l-m}]^*},e_{\pi_1^{-1}(u)_{\le m}} \rangle 
a_{\pi_1^{-1}(v)_{>m}} a_{\pi_2^{-1} (u^*)_{\le l-m}}^*
\end{align*}
    and by taking the adjoint, we obtain
\begin{align*}
(a_v)^*a_u =&\sum_{m=0}^{l} q^{(k-m)(l-m)}\sum_{\pi_2 \in \quotient{S_l}{S_{l-m} \times S_{m}}}  \sum_{\pi_1 \in \quotient{S_k}{S_{m} \times S_{k-m}}}q^{\mathrm{inv}(\pi_1)+\mathrm{inv}(\pi_2)}\\
&\cdot \langle  P^{(m)}e_{\pi_1^{-1}(v)_{\le m}},e_{[\pi_2^{-1} (u^*)_{>l-m}]^*}\rangle 
a_{[\pi_2^{-1} (u^*)_{\le l-m}]^*}a^*_{[\pi_1^{-1}(v)_{>m}]^*}\\
=&\sum_{m=0}^{l} q^{(k-m)(l-m)}\sum_{\pi_2^* \in \quotient{S_l}{S_{m} \times S_{l-m}}}  \sum_{\pi_1^* \in \quotient{S_k}{S_{k-m} \times S_{m}}}q^{\mathrm{inv}(\pi^*_1)+\mathrm{inv}(\pi_2^*)}\\
&\cdot \langle  P^{(m)}e_{[{\pi_1^*}^{-1}(v^*)_{>k- m}]^*},e_{{\pi_2^*}^{-1} (u)_{\le m}}\rangle 
a_{{\pi_2^*}^{-1} (u)_{>m}}a^*_{{\pi_1^*}^{-1}(v^*)_{\le k-m}}
\end{align*}
where $\quotient{S_l}{S_{l-m} \times S_{m}}\ni \pi \mapsto \pi^*\in \quotient{S_l}{S_{m} \times S_{l-m}}$ is given $\pi^*=w_0 \pi w_0$ with $w_0(i)=(l+1-i)$ for $1\le i \le l$ which preserves the number of inversions (see the proof of \cite[Theorem 2.2 (c)]{MR1811255}).

    We prove this lemma by strong induction on $0 \le k \le l$; the base case $k=0$ is the tautology $(a_v)^\ast a_u = (a_v)^\ast a_u$. Suppose that the formula holds for $k<l$. 
\begin{align*}
(a_{vi})^*a_u&=a_i^*(a_v)^*a_u\\
&=\sum_{m=0}^{k} q^{(k-m)(l-m)}\sum_{\pi_1 \in \quotient{S_k}{S_{k-m} \times S_{m}}}\sum_{\pi_2 \in \quotient{S_l}{S_{m} \times S_{l-m}}}  q^{\mathrm{inv}(\pi_1)+\mathrm{inv}(\pi_2)}\\
&\qquad\langle  P^{(m)}e_{[\pi_1^{-1} (v^*)_{>k-m}]^*},e_{\pi_2^{-1}(u)_{\le m}} \rangle 
a_i^* a_{\pi_2^{-1}(u)_{>m}} a_{\pi_1^{-1} (v^*)_{\le k-m}}^*
\end{align*}
We expand $a_i^* a_{\pi_2^{-1}(u)_{>m}}$ by iterating the $q$-commutation relation. Then, we have two cases; $a_i^*a_{\pi_2^{-1}(u)_{j}} $ for some $m+1 \le j \le l$; $a_i^*$ commutes with $a_{\pi_2^{-1}(u)_{j}}$ for all $m+1 \le j \le l$. Thus, the above sum is equal to 
\begin{align*}
&\sum_{m=0}^{k} q^{(k+1-m)(l-m)}\sum_{\pi_1 \in \quotient{S_k}{S_{k-m} \times S_{m}}}\sum_{\pi_2 \in \quotient{S_l}{S_{m} \times S_{l-m}}}  q^{\mathrm{inv}(\pi_1)+\mathrm{inv}(\pi_2)}\\
& \cdot\langle  P^{(m)}e_{[\pi_1^{-1} (v^*)_{>k-m}]^*},e_{\pi_2^{-1}(u)_{\le m}} \rangle 
 a_{\pi_2^{-1}(u)_{>m}} a_{i\pi_1^{-1} (v^*)_{\le k-m}}^*\\
 &+\sum_{m=0}^{k} q^{(k-m)(l-m)}\sum_{\pi_1 \in \quotient{S_k}{S_{k-m} \times S_{m}}}\sum_{\pi_2 \in \quotient{S_l}{S_{m} \times S_{l-m}}}  q^{\mathrm{inv}(\pi_1)+\mathrm{inv}(\pi_2)}\\
& \cdot \langle  P^{(m)}e_{[\pi_1^{-1} (v^*)_{>k-m}]^*},e_{\pi_2^{-1}(u)_{\le m}} \rangle 
 \sum_{j=m+1}^{l}q^{j-m-1}\delta_{i,\pi_2^{-1}(u)_j}a_{\pi_2^{-1}(u)_{(m,l]\setminus \{j\}}} a_{\pi_1^{-1} (v^*)_{\le k-m}}^*
 \end{align*}
 We also expand $\langle  P^{(m)}e_{[\pi_1^{-1} (v^*)_{>k-m}]^*},e_{\pi_2^{-1}(u)_{\le m}} \rangle$ by definition of $P^{(m)}$, and the quantity above is equal to
 \begin{align*}
 &\sum_{m=0}^{k} q^{(k+1-m)(l-m)}\sum_{\pi_1 \in \quotient{S_k}{S_{k-m} \times S_{m}}}\sum_{\pi_2 \in \quotient{S_l}{S_{m} \times S_{l-m}}}  q^{\mathrm{inv}(\pi_1)+\mathrm{inv}(\pi_2)}\\
& \cdot \langle  P^{(m)}e_{[\pi_1^{-1} (v^*)_{>k-m}]^*},e_{\pi_2^{-1}(u)_{\le m}} \rangle 
 a_{\pi_2^{-1}(u)_{>m}} a_{i\pi_1^{-1} (v^*)_{\le k-m}}^* \\
&+ \sum_{m=0}^{k} q^{(k-m)(l-m)}\sum_{\pi_1 \in \quotient{S_k}{S_{k-m} \times S_{m}}}\sum_{\pi_2 \in \quotient{S_l}{S_{m} \times S_{l-m}}} \sum_{\sigma \in S_m} \sum_{j=m+1}^{l} q^{j-m-1+ \mathrm{inv}(\pi_1)+\mathrm{inv}(\pi_2)+\mathrm{inv}(\sigma)}\\
& \cdot \langle  e_{\sigma([\pi_1^{-1} (v^*)_{>k-m}]^*)},e_{\pi_2^{-1}(u)_{\le m}} \rangle 
 \delta_{i,\pi_2^{-1}(u)_j}a_{\pi_2^{-1}(u)_{(m,l]\setminus \{j\}}} a_{\pi_1^{-1} (v^*)_{\le k-m}}^*
\end{align*}
where $a_{\pi_2^{-1}(u)_{(m,l]\setminus \{j\}}}=a_{\pi_2^{-1}(u)_{m+1}}\cdots a_{\pi_2^{-1}(u)_{j-1}}a_{\pi_2^{-1}(u)_{j+1}}\cdots a_{\pi_2^{-1}(u)_{l}}$.
    For the first sum,  we replace $\pi_1 \in \quotient{S_k}{S_{k-m} \times S_{m}}$ with $\pi'_1 \in \quotient{S_{k+1}}{S_{k+1-m} \times S_{m}}$, $\pi'_1(1)=1$ defined by $\pi'_1(1)=1$ and $\pi'_1(j)=\pi(j-1)+1$ for $j>1$. Note that we have 
    \[\mathrm{inv}(\pi'_1)=\mathrm{inv}(\pi_1).\]
    For the second sum, we associate $\pi_1 \in \quotient{S_k}{S_{k-m} \times S_{m}}$, $\pi_2 \in \quotient{S_l}{S_{m} \times S_{l-m}}$, $\sigma \in S_m$, and $m+1\le j \le l$ with $\pi'_1 \in \quotient{S_{k+1}}{S_{k-m} \times S_{m+1}} $ such that $\pi'_1(k-m+1)=1$, $\pi_2' \in \quotient{S_l}{S_{m+1} \times S_{l-m-1}} $, $\sigma' \in S_{m+1}$ as follows; $\pi'_1(k-m+1)=1$ and $\pi'_1(j)|_{[1,m+1]\setminus \{k-m+1\}}=\pi_1$;  $\pi_2'(s)=\pi_2(j)$ for $1\le s\le m+1$ such that $\pi_2(s-1)<\pi_2(j)<\pi_2(s)$ (when $\pi_2(m)<\pi_2(j)$, we set $s=m+1$), $\pi'_2|_{[1,l]\setminus \{s\}}=\pi_2|_{[1,l]\setminus \{j\}}$; $\sigma'(m+1)=s$ and $\sigma'|_{[1,m]}=\sigma$ in a order-preserving way. Note that we have
    \begin{align*} \mathrm{inv}(\pi'_1)&=\mathrm{inv}(\pi_1)+k-m,\\
    \mathrm{inv}(\pi'_2)&=\mathrm{inv}(\pi_2)+(j-m-1)-(m+1-s),\\
    \mathrm{inv}(\sigma')&=\mathrm{inv}(\sigma)+(m+1-s),
    \end{align*}
and 
\begin{align*} \langle  &e_{\sigma'([{\pi'}_1^{-1} (iv^*)_{>k-m}]^*)},e_{{\pi'}_2^{-1}(u)_{\le m+1}} \rangle  a_{{\pi'}_2^{-1}(u)_{>m+1}} a_{{\pi'}_1^{-1} (iv^*)_{\le k-m}}^* \\
&=\langle  e_{\sigma([\pi_1^{-1} (v^*)_{>k-m}]^*)},e_{\pi_2^{-1}(u)_{\le m}} \rangle 
 \delta_{i,\pi_2^{-1}(u)_j}a_{\pi_2^{-1}(u)_{(m,l]\setminus \{j\}}} a_{\pi_1^{-1} (v^*)_{\le k-m}}^*.
 \end{align*}
 Since this correspondence is one-to-one, we rewrite the sum by using $\pi'_1,\pi'_2,\sigma'$ instead of $\pi_1,\pi_2,\sigma$, and we have 
 
 \begin{align*}
&\sum_{m=0}^{k} q^{(k+1-m)(l-m)}\sum_{\substack{\pi'_1 \in \quotient{S_{k+1}}{S_{k+1-m} \times S_{m}}\\ \pi'_1(1)=1}}\sum_{\pi_2 \in \quotient{S_l}{S_{m} \times S_{l-m}}}  q^{\mathrm{inv}(\pi'_1)+\mathrm{inv}(\pi_2)}\\
& \cdot \langle  P^{(m)}e_{[{\pi'}_1^{-1} (iv^*)_{>k+1-m}]^*},e_{\pi_2^{-1}(u)_{\le m}} \rangle 
 a_{\pi_2^{-1}(u)_{>m}} a_{{\pi'}_1^{-1} (iv^*)_{\le k+1-m}}^* \\
&+ \sum_{m=0}^{k} q^{(k-m)(l-m-1)}\sum_{\substack{\pi'_1 \in \quotient{S_{k+1}}{S_{k-m} \times S_{m+1}}\\ \pi'_1(k-m+1)=1}}\sum_{\pi'_2 \in \quotient{S_l}{S_{m+1} \times S_{l-m-1}}} \sum_{\sigma' \in S_{m+1}} q^{ \mathrm{inv}(\pi'_1)+\mathrm{inv}(\pi'_2)+\mathrm{inv}(\sigma')}\\
& \cdot\langle  e_{\sigma'([{\pi'}_1^{-1} (iv^*)_{>k-m}]^*)},e_{{\pi'}_2^{-1}(u)_{\le m+1}} \rangle 
 a_{{\pi'}_2^{-1}(u)_{>m+1}} a_{{\pi'}_1^{-1} (iv^*)_{\le k-m}}^*
 \end{align*}
 By replacing $m+1$ with $m$ in the second sum, we have
 \begin{align*}
 &\sum_{m=0}^{k} q^{(k+1-m)(l-m)}\sum_{\substack{\pi'_1 \in \quotient{S_{k+1}}{S_{k+1-m} \times S_{m}}\\ \pi'_1(1)=1}}\sum_{\pi_2 \in \quotient{S_l}{S_{m} \times S_{l-m}}}  q^{\mathrm{inv}(\pi'_1)+\mathrm{inv}(\pi_2)}\\
& \cdot\langle  P^{(m)}e_{[{\pi'}_1^{-1} (iv^*)_{>k+1-m}]^*},e_{\pi_2^{-1}(u)_{\le m}} \rangle 
 a_{\pi_2^{-1}(u)_{>m}} a_{{\pi'}_1^{-1} (iv^*)_{\le k+1-m}}^* \\
&+ \sum_{m=1}^{k+1} q^{(k+1-m)(l-m)}\sum_{\substack{\pi'_1 \in \quotient{S_{k+1}}{S_{k+1-m} \times S_{m}}\\ \pi'_1(k+1-m+1)=1}}\sum_{\pi'_2 \in \quotient{S_l}{S_{m} \times S_{l-m}}} q^{ \mathrm{inv}(\pi'_1)+\mathrm{inv}(\pi'_2)}\\
& \cdot \langle P^{(m)} e_{[{\pi'}_1^{-1} (iv^*)_{>k+1-m}]^*},e_{{\pi'}_2^{-1}(u)_{\le m}} \rangle 
 a_{{\pi'}_2^{-1}(u)_{>m}} a_{{\pi'}_1^{-1} (iv^*)_{\le k+1-m}}^*.
 \end{align*}
 Note that $\pi'_1 \in \quotient{S_{k+1}}{S_{k+1-m} \times S_{m}}$ satisfies either ${\pi'}_1^{-1}(1)=1$ or ${\pi'}_1^{-1}(1)=k+1-m+1$, and ${\pi'}_1^{-1}(1)=1$ when $m=0,k+1$. Therefore, the above sum is equal to
 
 \begin{align*}
 &\sum_{m=0}^{k+1} q^{(k+1-m)(l-m)}\sum_{\pi'_1 \in \quotient{S_{k+1}}{S_{k+1-m} \times S_{m}}}\sum_{\pi'_2 \in \quotient{S_l}{S_{m} \times S_{l-m}}} q^{ \mathrm{inv}(\pi'_1)+\mathrm{inv}(\pi'_2)}\\
& \cdot \langle P^{(m)} e_{[{\pi'}_1^{-1} (iv^*)_{>k+1-m}]^*},e_{{\pi'}_2^{-1}(u)_{\le m}} \rangle 
 a_{{\pi'}_2^{-1}(u)_{>m}} a_{{\pi'}_1^{-1} (iv^*)_{\le k+1-m}}^*,
 \end{align*}
 which completes the induction.
\end{proof}
We can reformulate this lemma by using $a_n$ and $a_n^*$, cf.\ \eqref{eq:an.an*}.  Note that the operation $\ast$ from Definition \ref{definition.star} is anti-unitary \cite[Theorem 2.2 (c)]{MR1811255}) and we have
\[ \langle  P^{(m)}e_{[\pi_1^{-1} (v^*)_{>k-m}]^*},e_{\pi_2^{-1}(u)_{\le m}} \rangle=\langle e_{\pi_1^{-1} (v^*)_{>k-m}},(e_{\pi_2^{-1}(u)_{\le m}})^* \rangle_q.\]
Since $(a_v)^*=a^*_k(e_{v^*})$, the left hand side of the formula in Lemma \ref{AnnihilationCreation} is equal to  $M(a^*_k\otimes a_l)(e_{v^*}\otimes e_u)$ and the right hand side can be written as
\[\sum_{m=0}^{\min(k,l)}q^{(k-m)(l-m)}M(a_{l-m}\otimes a^*_{k-m})U_{\mathrm{flip}}(I\otimes \Phi_m \otimes I)(R^*_{k-m,k}\otimes R^*_{m,l})(e_{v^*}\otimes e_u)\]
where $\Phi_m: H^{\otimes m} \otimes H^{\otimes m} \to \mathbb{C}$ is a linear map defined by 
\[\Phi_m(\xi\otimes \eta) = \langle \xi,\eta^*\rangle_q,\]
and $U_{\mathrm{flip}}$ is a linear map defined by the flip on $H^{\otimes k-m}\otimes H^{\otimes l-m}$, i.e. $U_{\mathrm{flip}}(a\otimes b)=b\otimes a$. In our notation, $I\otimes \Phi_m \otimes I$ acts on $H^{\otimes k-m}\otimes H^{\otimes m}\otimes H^{\otimes m}\otimes H^{\otimes l-m}$ where we apply $\Phi_m$ to the second and third tensor components and the identity operator $I$ to other tensor components. Since all operations are linear maps on $H^{\otimes k}\otimes H^{\otimes l}$ (which is finite-dimensional), we have the following
\begin{cor}[cf. Lemma 4.6 in \cite{MR4251282} ]\label{creationannihilation2}
As a linear map from $H^{\otimes k}\otimes H^{\otimes l}$ to $B(\mathcal{F}_q(H))$, we have the following identity, 
\[
M(a^*_k\otimes a_l)
=\sum_{m=0}^{\min(k,l)}q^{(k-m)(l-m)}M(a_{l-m}\otimes a^*_{k-m})U_{\mathrm{flip}}(I\otimes \Phi_m \otimes I)(R^*_{k-m,k}\otimes R^*_{m,l}). 
\]

\end{cor}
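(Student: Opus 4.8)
The plan is to recognize Corollary \ref{creationannihilation2} as nothing more than Lemma \ref{AnnihilationCreation} rewritten in the operator-valued notation of \eqref{eq:an.an*}, so that the only work is a careful translation of the combinatorics already established, with no new analytic content. Both sides of the asserted identity are linear maps out of the finite-dimensional space $H^{\otimes k}\otimes H^{\otimes l}$, so it suffices to check the identity applied to each basis vector $e_{v^\ast}\otimes e_u$, where $v,u\in[d,\overline d]^\ast$ with $|v|=k$ and $|u|=l$.

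First I would evaluate the left-hand side. By Definition \ref{definition.star} and the selfadjointness of the basis vectors $e_i$, we have $a_k^\ast(e_{v^\ast})=(a_v)^\ast$ and $a_l(e_u)=a_u$, so $M(a_k^\ast\otimes a_l)(e_{v^\ast}\otimes e_u)=(a_v)^\ast a_u$, which is exactly the left-hand side of Lemma \ref{AnnihilationCreation}.

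Next I would unwind the right-hand side using the following dictionary. First, $R^\ast_{j,n}(e_w)=\sum_{\pi}q^{\mathrm{inv}(\pi)}e_{\pi^{-1}(w)}$, the sum being over the minimal-inversion representatives of the cosets in $S_n/(S_j\times S_{n-j})$; this holds because the permutation operators are orthogonal on $H^{\otimes n}$ with its ordinary inner product (so $\pi^\ast=\pi^{-1}$) and $q$ is real, and it is consistent with the second formula in Lemma \ref{qCircular}. Applying $R^\ast_{k-m,k}\otimes R^\ast_{m,l}$ to $e_{v^\ast}\otimes e_u$ and then splitting the tensor legs at positions $k-m$ and $m$ produces $\sum_{\pi_1,\pi_2}q^{\mathrm{inv}(\pi_1)+\mathrm{inv}(\pi_2)}\,e_{\pi_1^{-1}(v^\ast)_{\le k-m}}\otimes e_{\pi_1^{-1}(v^\ast)_{>k-m}}\otimes e_{\pi_2^{-1}(u)_{\le m}}\otimes e_{\pi_2^{-1}(u)_{>m}}$, with $\pi_1$ ranging over $S_k/(S_{k-m}\times S_m)$ and $\pi_2$ over $S_l/(S_m\times S_{l-m})$. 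The middle map $I\otimes\Phi_m\otimes I$ contracts the two length-$m$ legs into the scalar $\Phi_m(e_{\pi_1^{-1}(v^\ast)_{>k-m}}\otimes e_{\pi_2^{-1}(u)_{\le m}})=\langle e_{\pi_1^{-1}(v^\ast)_{>k-m}},(e_{\pi_2^{-1}(u)_{\le m}})^\ast\rangle_q$, which, since $\ast$ is anti-unitary on $H^{\otimes m}$ and commutes with $P^{(m)}$ (both valid because $q$ is real), and since the relevant matrix coefficients are real, equals $\langle P^{(m)}e_{[\pi_1^{-1}(v^\ast)_{>k-m}]^\ast},e_{\pi_2^{-1}(u)_{\le m}}\rangle$ --- precisely the coefficient appearing in Lemma \ref{AnnihilationCreation}. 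Then $U_{\mathrm{flip}}$ reorders the two surviving legs to $e_{\pi_2^{-1}(u)_{>m}}\otimes e_{\pi_1^{-1}(v^\ast)_{\le k-m}}$, and $M(a_{l-m}\otimes a^\ast_{k-m})$ converts this into the operator $a_{\pi_2^{-1}(u)_{>m}}a^\ast_{\pi_1^{-1}(v^\ast)_{\le k-m}}$. Multiplying by the prefactor $q^{(k-m)(l-m)}$ and summing over $0\le m\le\min(k,l)$ reproduces the right-hand side of Lemma \ref{AnnihilationCreation} term by term, so the two maps agree on every basis vector, and hence coincide.

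The only steps that require genuine care are the bookkeeping of which length-$m$, length-$(k-m)$, or length-$(l-m)$ tensor factor feeds which creation or annihilation operator, and the identity in the previous paragraph relating $\Phi_m$ to the $P^{(m)}$-pairing via anti-unitarity of $\ast$. I do not expect either to present a real obstacle: the inversion-weight accounting, as well as the reduction of the case $k>l$ to the case $k\le l$, has already been carried out in the proof of Lemma \ref{AnnihilationCreation}, so this corollary is a matter of re-expression rather than new argument.
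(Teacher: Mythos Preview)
Your proposal is correct and follows essentially the same approach as the paper: both identify the corollary as a direct translation of Lemma \ref{AnnihilationCreation} into the operator notation of Definition \ref{definition.star}, verify the identity on basis vectors $e_{v^\ast}\otimes e_u$, and invoke the anti-unitarity of $\ast$ to rewrite the $P^{(m)}$-pairing as $\Phi_m$. Your write-up is slightly more explicit about the action of $R^\ast_{j,n}$ on basis vectors and the bookkeeping of tensor legs, but there is no substantive difference in strategy.
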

The next two lemmas are fundamental to our approach to the strong Haagerup inequality; we use them several times in Section \ref{stronghaag}.
\begin{lm}\label{Rstar}
   Let $k\le n$ be positive integers. For $\xi \in H^{\otimes n}$, we have
    \[\|R^*_{k,n}\xi\|_{\mathcal{F}_q(H)^{\otimes 2}} \le C_{|q|}^{\frac{1}{2}}\|\xi\|_{\mathcal{F}_q(H)}\]
where $R^*_{k,n}\xi\in H^{\otimes k} \otimes H^{\otimes n-k}$ is embedded into $\mathcal{F}_q(H)^{\otimes 2}$. 
\end{lm}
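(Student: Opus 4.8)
The plan is to exploit the factorization identity from Lemma \ref{NCbinom}, namely $P^{(n)} = (P^{(k)}\otimes P^{(n-k)})R_{k,n}^*$, together with the operator inequality $P^{(n)} \le C_{|q|}(P^{(k)}\otimes P^{(n-k)})$, to control the $\mathcal{F}_q(H)^{\otimes 2}$-norm of $R_{k,n}^*\xi$ by the $\mathcal{F}_q(H)$-norm of $\xi$. The key observation is that for a vector $\eta \in H^{\otimes k}\otimes H^{\otimes n-k}$ embedded into $\mathcal{F}_q(H)^{\otimes 2}$, the squared norm is $\|\eta\|_{\mathcal{F}_q(H)^{\otimes 2}}^2 = \langle (P^{(k)}\otimes P^{(n-k)})\eta, \eta\rangle_{H^{\otimes n}}$, where the inner product on the right is the un-deformed (boosted) inner product on $H^{\otimes n}$; similarly $\|\xi\|_{\mathcal{F}_q(H)}^2 = \langle P^{(n)}\xi,\xi\rangle_{H^{\otimes n}}$.

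First I would write, using the boosted inner product $\langle\cdot,\cdot\rangle$ on $H^{\otimes n}$ throughout,
\[
\|R_{k,n}^*\xi\|_{\mathcal{F}_q(H)^{\otimes 2}}^2 = \langle (P^{(k)}\otimes P^{(n-k)})R_{k,n}^*\xi, R_{k,n}^*\xi\rangle = \langle R_{k,n}(P^{(k)}\otimes P^{(n-k)})R_{k,n}^*\xi, \xi\rangle.
\]
By Lemma \ref{NCbinom}, $R_{k,n}(P^{(k)}\otimes P^{(n-k)}) = P^{(n)}$, and applying the identity once more, $R_{k,n}(P^{(k)}\otimes P^{(n-k)})R_{k,n}^* = P^{(n)}R_{k,n}^* = (P^{(k)}\otimes P^{(n-k)})R_{k,n}^*R_{k,n}^*$; but it is cleaner to instead note $R_{k,n}(P^{(k)}\otimes P^{(n-k)})R_{k,n}^* = P^{(n)}R_{k,n}^*$ and use self-adjointness of $P^{(n)}$ combined with $P^{(n)} = (P^{(k)}\otimes P^{(n-k)})R_{k,n}^*$ read the other way. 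The slicker route: since $P^{(n)}$ is positive, write $R_{k,n}^*\xi$ in terms of $P^{(n)}$ directly. Actually the most transparent estimate is
\[
\|R_{k,n}^*\xi\|_{\mathcal{F}_q(H)^{\otimes 2}}^2 = \langle (P^{(k)}\otimes P^{(n-k)})R_{k,n}^*\xi, R_{k,n}^*\xi\rangle \le C_{|q|}^{-1}\,\|R_{k,n}^*\xi\|_?^2
\]
is the wrong direction, so instead I would bound $\langle P^{(n)}R_{k,n}^*\xi,\xi\rangle \le \|P^{(n)}R_{k,n}^*\xi\|\,\|\xi\|$ is also not sharp enough; the right move is to use $P^{(n)} \le C_{|q|}(P^{(k)}\otimes P^{(n-k)})$ to get, for any $\eta\in H^{\otimes n}$, that $\|\eta\|_{\mathcal{F}_q(H)}^2 = \langle P^{(n)}\eta,\eta\rangle \le C_{|q|}\langle(P^{(k)}\otimes P^{(n-k)})\eta,\eta\rangle = C_{|q|}\|\eta\|_{\mathcal{F}_q(H)^{\otimes2}}^2$ — but this compares norms of the \emph{same} vector, so what I really need is the reverse comparison applied after pulling through $R_{k,n}^*$.

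The clean argument: compute
\[
\|R_{k,n}^*\xi\|_{\mathcal{F}_q(H)^{\otimes2}}^2 = \langle R_{k,n}(P^{(k)}\otimes P^{(n-k)})R_{k,n}^*\xi,\xi\rangle = \langle P^{(n)}R_{k,n}^*\xi,\xi\rangle = \langle (P^{(k)}\otimes P^{(n-k)})R_{k,n}^*R_{k,n}^*\xi,\xi\rangle,
\]
wait — better to use $P^{(n)}R_{k,n}^* = R_{k,n}(P^{(k)}\otimes P^{(n-k)})R_{k,n}^*$, which is self-adjoint and positive, and bound it above by $C_{|q|}R_{k,n}(P^{(k)}\otimes P^{(n-k)})R_{k,n}^* \le$ ... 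I will instead simply estimate
\[
\langle P^{(n)}R_{k,n}^*\xi,\xi\rangle \le \|R_{k,n}^*\xi\|_{\mathcal{F}_q(H)^{\otimes2}} \cdot \|P^{(n)\,1/2}(\text{dual})\|
\]
by Cauchy–Schwarz in the form $\langle P^{(n)}R_{k,n}^*\xi,\xi\rangle = \langle (P^{(k)}\otimes P^{(n-k)})^{1/2}R_{k,n}^*\xi, (P^{(k)}\otimes P^{(n-k)})^{1/2}R_{k,n}^{-\,?}\rangle$; this is getting circular. The genuinely correct and short proof: since $R_{k,n}(P^{(k)}\otimes P^{(n-k)})R_{k,n}^* = P^{(n)}R_{k,n}^*$ and taking adjoints $R_{k,n}(P^{(k)}\otimes P^{(n-k)})R_{k,n}^* = R_{k,n}P^{(n)}$, these are equal and self-adjoint. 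Call this operator $Q$. Then $\|R_{k,n}^*\xi\|_{\mathcal{F}_q^{\otimes2}}^2 = \langle Q\xi,\xi\rangle$. Now $Q = R_{k,n}P^{(n)}$ and $\|R_{k,n}\|\le C_{|q|}$ by Lemma \ref{NCbinom}, so $\langle Q\xi,\xi\rangle = \langle P^{(n)}\xi, R_{k,n}^*\xi\rangle \le \|P^{(n)1/2}\xi\|\,\|P^{(n)1/2}R_{k,n}^*\xi\|$ using that $P^{(n)}$ is positive, $= \|\xi\|_{\mathcal{F}_q(H)}\,\|R_{k,n}^*\xi\|_{\mathcal{F}_q(H)}$ — but the last norm is a full-Fock-space norm of a vector in $H^{\otimes n}$, which by $P^{(n)}\le C_{|q|}(P^{(k)}\otimes P^{(n-k)})$ is $\le C_{|q|}^{1/2}\|R_{k,n}^*\xi\|_{\mathcal{F}_q(H)^{\otimes2}}$. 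Substituting, $\|R_{k,n}^*\xi\|_{\mathcal{F}_q^{\otimes2}}^2 \le C_{|q|}^{1/2}\|\xi\|_{\mathcal{F}_q(H)}\|R_{k,n}^*\xi\|_{\mathcal{F}_q^{\otimes2}}$, and dividing through gives the claim. The main obstacle is purely bookkeeping: keeping straight which inner product ($q$-deformed on $\mathcal{F}_q$, deformed on $\mathcal{F}_q^{\otimes2}$, or boosted on $H^{\otimes n}$) is in play at each step, and correctly invoking the two halves of Lemma \ref{NCbinom} — the exact factorization and the operator inequality — in the right order; there is no analytic difficulty once the algebra is arranged correctly.
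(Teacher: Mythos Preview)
Your final argument is correct and is essentially the paper's proof: both compute $\|R_{k,n}^*\xi\|_{\mathcal{F}_q(H)^{\otimes 2}}^2 = \langle P^{(n)}\xi, R_{k,n}^*\xi\rangle$ via the factorization in Lemma \ref{NCbinom}, apply Cauchy--Schwarz in the $q$-inner product to get $\le \|\xi\|_{\mathcal{F}_q(H)}\|R_{k,n}^*\xi\|_{\mathcal{F}_q(H)}$, and then use the operator inequality $P^{(n)}\le C_{|q|}(P^{(k)}\otimes P^{(n-k)})$ to close the loop. The only cosmetic difference is that the paper first derives the intermediate bound $\|R_{k,n}^*\xi\|_{\mathcal{F}_q(H)}\le C_{|q|}\|\xi\|_{\mathcal{F}_q(H)}$ and substitutes, whereas you bound $\|R_{k,n}^*\xi\|_{\mathcal{F}_q(H)}\le C_{|q|}^{1/2}\|R_{k,n}^*\xi\|_{\mathcal{F}_q(H)^{\otimes 2}}$ and divide through---same ingredients, marginally shorter; but the exploratory false starts should be excised from a final write-up.
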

\begin{proof}
    This proof is based on the same argument in the proof of Proposition 2.1 in \cite{MR1811255}. By Lemma \ref{NCbinom}, we have $P^{(n)}=(P^{(k)}\otimes P^{(n-k)})R^*_{k,n}$. We apply this to $\|R^*_{k,n}\xi\|_{\mathcal{F}_q(H)^{\otimes 2}}^2$, and we get
    \begin{align*}
\|R^*_{k,n}\xi\|_{\mathcal{F}_q(H)^{\otimes 2}}^2  &=\langle (P^{(k)}\otimes P^{(n-k)})R^*_{k,n} \xi, R^*_{k,n} \xi \rangle\\
&=\langle P^{(n)}\xi,R^*_{k,n} \xi\rangle.
    \end{align*}
    Using the Cauchy-Schwarz inequality we obtain
    \[\langle P^{(n)}\xi,R^*_{k,n} \xi\rangle \le \|\xi\|_{\mathcal{F}_q(H)}\|R^*_{k,n} \xi\|_{\mathcal{F}_q(H)}. \]
    By Lemma \ref{NCbinom}, we also have $P^{(n)}\le C_{|q|}P^{(k)}\otimes P^{(n-k)}$. We apply this to $\|R^*_{k,n} \xi\|_{\mathcal{F}_q(H)}^2$ and we get
    \begin{align*}
        \|R^*_{k,n} \xi\|_{\mathcal{F}_q(H)}^2&=\langle P^{(n)}R^*_{k,n}\xi,R^*_{k,n} \xi\rangle\\
        &\le C_{|q|}\langle P^{(k)}\otimes P^{(n-k)} R^*_{k,n}\xi,R^*_{k,n} \xi\rangle\\
        &= C_{|q|}\langle P^{(n)}\xi,R^*_{k,n} \xi\rangle.
    \end{align*}
    Again, by the Cauchy-Schwarz inequality, we have
    \begin{align*}
        \|R^*_{k,n} \xi\|_{\mathcal{F}_q(H)}^2
        &\le C_{|q|}\langle P^{(n)}\xi,R^*_{k,n} \xi\rangle\\
        &\le C_{|q|}\|\xi\|_{\mathcal{F}_q(H)}
        \|R^*_{k,n} \xi\|_{\mathcal{F}_q(H)}.
    \end{align*}
    Therefore, we obtain
    \[ \|R^*_{k,n} \xi\|_{\mathcal{F}_q(H)}\le C_{|q|}\|\xi\|_{\mathcal{F}_q(H)},\]
    and
    \[\|R^*_{k,n}\xi\|_{\mathcal{F}_q(H)^{\otimes 2}}^2\le \|\xi\|_{\mathcal{F}_q(H)}\|R^*_{k,n} \xi\|_{\mathcal{F}_q(H)}\le C_{|q|}\|\xi\|_{\mathcal{F}_q(H)}^2.  \]
Taking square roots concludes the proof.
\end{proof}

\begin{lm}\label{tensorineq}
        Let $K,L,M$ be (complex) Hilbert spaces and $U:L\to L$ be an anti-unitary. Define a linear map $\Phi_U:L\otimes L \to \mathbb{C}$ by \[\Phi_U(\xi \otimes \eta)=\langle \xi,U\eta\rangle_{L}.\]
   Then, we have for $\xi \in K\otimes L$ and $\eta \in L\otimes M$
   \[\|(I_K\otimes \Phi_U \otimes I_{M} )(\xi \otimes \eta)\|_{K \otimes M}\le \|\xi\|_{K\otimes L}\|\eta\|_{L\otimes M}\]
   \end{lm}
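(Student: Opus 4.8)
The plan is to show that contracting the two middle $L$-factors against $\Phi_U$ amounts to applying a single bounded operator $T_\xi\colon L\to K$, built from $\xi$, in the left slot while leaving $M$ untouched. Fix an orthonormal basis $\{g_s\}$ of $L$ and write $\xi=\sum_s \xi_s\otimes g_s$ with $\xi_s\in K$, so that $\|\xi\|_{K\otimes L}^2=\sum_s\|\xi_s\|_K^2$. Define $T_\xi l=\sum_s\langle g_s,Ul\rangle_L\,\xi_s$ for $l\in L$. Anti-unitarity of $U$ makes $l\mapsto Ul$ conjugate-linear and isometric, so $l\mapsto\langle g_s,Ul\rangle_L$ is linear and $\sum_s|\langle g_s,Ul\rangle_L|^2=\|Ul\|_L^2=\|l\|_L^2$ by Parseval's identity; hence the series defining $T_\xi l$ converges and $\|T_\xi l\|_K\le\|\xi\|_{K\otimes L}\,\|l\|_L$ by Cauchy--Schwarz, i.e.\ $T_\xi$ is bounded with $\|T_\xi\|\le\|\xi\|_{K\otimes L}$.

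Next I would check on simple tensors $\eta=l\otimes m$, then extend by linearity and continuity, that $(I_K\otimes\Phi_U\otimes I_M)(\xi\otimes\eta)=(T_\xi\otimes I_M)\eta$, since both sides equal $\sum_s\langle g_s,Ul\rangle_L\,\xi_s\otimes m$. The estimate then follows from $\|(T_\xi\otimes I_M)\eta\|\le\|T_\xi\otimes I_M\|\,\|\eta\|=\|T_\xi\|\,\|\eta\|\le\|\xi\|_{K\otimes L}\|\eta\|_{L\otimes M}$, using the standard fact that $\|T\otimes I_M\|=\|T\|$ for a bounded operator $T$. If one prefers to avoid invoking that fact, one can instead expand $\eta=\sum_t g_t\otimes\eta_t$, observe that $(I_K\otimes\Phi_U\otimes I_M)(\xi\otimes\eta)=\sum_t\zeta_t\otimes\eta_t$ with $\zeta_t=\sum_s\langle g_s,Ug_t\rangle_L\,\xi_s$, and bound $\|\sum_t\zeta_t\otimes\eta_t\|\le\sum_t\|\zeta_t\|\,\|\eta_t\|\le(\sum_t\|\zeta_t\|^2)^{1/2}(\sum_t\|\eta_t\|^2)^{1/2}$ by the triangle and Cauchy--Schwarz inequalities; here $\sum_t\|\zeta_t\|^2=\|\xi\|^2$ because $\big(\langle g_s,Ug_t\rangle_L\big)_{s,t}$ is a unitary matrix (again a consequence of the anti-unitarity of $U$, which sends $\{g_t\}$ to an orthonormal basis), and $\sum_t\|\eta_t\|^2=\|\eta\|^2$.

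I do not expect a genuine obstacle here: the content is elementary linear algebra together with the isometry property of $U$. The one point that must not be overlooked is that $\Phi_U$ is \emph{not} a bounded linear functional on $L\otimes L$ when $\dim L=\infty$ (its ``norm'' is of order $\sqrt{\dim L}$), so the contraction cannot be bounded naively; the argument genuinely exploits that $\Phi_U$ is applied to a vector of the product form $\xi\otimes\eta$, which is exactly what passing through the single operator $T_\xi$ (equivalently, the unitary change of basis above) accomplishes. In the paper's application $L=H^{\otimes m}$ is finite-dimensional, so all the series above are finite and convergence is automatic.
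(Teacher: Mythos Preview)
Your proof is correct, and the core ingredients---orthonormal basis expansion in $L$, Cauchy--Schwarz, and the isometry property of the anti-unitary $U$---are exactly those of the paper's proof. The paper proceeds by expanding both $\xi$ and $\eta$ in full coordinate bases of $K$, $L$, $M$ simultaneously and computing the norm squared directly, applying Cauchy--Schwarz to each $\langle\,\cdot\,,U\,\cdot\,\rangle_L$ term; you instead package the same computation as $(T_\xi\otimes I_M)\eta$ with $\|T_\xi\|\le\|\xi\|$, which is a slightly cleaner operator-theoretic reformulation of the identical estimate. Your alternative route (via the unitarity of the matrix $(\langle g_s,Ug_t\rangle)_{s,t}$) is also valid and again amounts to the same Cauchy--Schwarz bound in disguise. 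Your closing remark that $\Phi_U$ is unbounded on $L\otimes L$ in infinite dimensions, so that the product structure $\xi\otimes\eta$ is genuinely needed, is a nice observation that the paper does not make explicit.
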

   \begin{proof}
      Let $\{x_k\}\subset K$, $\{y_l\}\subset L$, $\{z_m\}\subset M$ be orthonormal bases. For $\xi=\sum_{k,l} \alpha_{k,l}x_k \otimes y_l $ and $\eta=\sum_{l,m}\beta_{l,m}y_l\otimes z_m$, we have
      \begin{align*}
       \|(I_K\otimes \Phi_U \otimes I_{M} )(\xi \otimes \eta)\|^2_{K\otimes M}&=\left\|\sum_{k,l,l',m}\alpha_{k,l}\beta_{l',m}\langle y_l,Uy_{l'}\rangle_L x_k \otimes z_m\right\|^2_{K \otimes M}\\
&=\sum_{k,m}\left|\sum_{l,l'}\alpha_{k,l}\beta_{l',m} \langle y_l,Uy_{l'}\rangle_L\right|^2\\
&=\sum_{k,m}\left| \left\langle \sum_{l}\alpha_{k,l}y_l,U\sum_{l'}\beta_{l',m}y_{l'}\right\rangle_L\right|^2.
       \end{align*}
       Using the Cauchy-Schwarz inequality, we have
       \begin{align*}
         \sum_{k,m}\left| \left\langle \sum_{l}\alpha_{k,l}y_l,U\sum_{l'}\beta_{l',m}y_{l'}\right\rangle_L\right|^2&\le \sum_{k,m} \left\|\sum_{l}\alpha_{k,l}y_l\right\|^2_L \left\|U\sum_{l}\beta_{l,m}y_{l}\right\|^2_L \\
         &=\sum_{k,m} \left\|\sum_{l}\alpha_{k,l}y_l\right\|^2_L \left\|\sum_{l}\beta_{l,m}y_{l}\right\|^2_L \\
         &=\left(\sum_{k,l}|\alpha_{k,l}|^2\right)\left(\sum_{m,l}|\beta_{l,m}|^2\right)\\
         &=\|\xi\|^2_{K\otimes L}\|\eta\|^2_{L\otimes M}
       \end{align*}
       where we use the assumption that $U$ is anti-unitary in the first equality.
       This concludes the proof.
   \end{proof}

\section{The strong Haagerup inequality for $q$-circular systems}\label{stronghaag}
To prove our main result, we prove the following lemma.
\begin{lm}\label{KeyLem}
     There exists $A=A(|q|)>0$ such that, for any $n \in \mathbb{N}$ and $\xi_k \in H_1^{\otimes k}\otimes H_2^{\otimes n-k}$ we have
       \[\left\|\sum_{k=1}^{n}M (a_k\otimes a^*_{n-k})\xi_k\right\|\le A\sqrt{n}\max_{1\le k\le n}\|\xi_k\|_{\mathcal{F}_q(H)^{\otimes 2}} \]
 \end{lm}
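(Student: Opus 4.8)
The plan is to mirror the $q=0$ argument of Section~\ref{freecase}: split $\|T\|^2$ into a diagonal and an off-diagonal part via the $\mathrm C^\ast$-identity, and replace every use of $a_i^\ast a_j=\delta_{ij}$ by the normal-ordering identity of Corollary~\ref{creationannihilation2}, keeping the extra terms under control with Corollaries~\ref{generalproduct} and~\ref{norm_of_creation} and Lemmas~\ref{Rstar} and~\ref{tensorineq}. Write $T=\sum_{k=1}^n A_k$ with $A_k=M(a_k\otimes a_{n-k}^\ast)\xi_k$ and $\mu=\max_{1\le k\le n}\|\xi_k\|_{\mathcal F_q(H)^{\otimes 2}}$. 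Since $[a_m(\psi)]^\ast=a_m^\ast(\psi^\ast)$, we have $A_k^\ast=M(a_{n-k}\otimes a_k^\ast)(\xi_k^\ast)$, so Corollary~\ref{generalproduct} gives $\|A_k\|\le C_{|q|}\mu$. Using $\|T\|^2=\|T^\ast T\|$ and the triangle inequality,
\[ \|T\|^2\le\Bigl\|\sum_{k=1}^n A_k^\ast A_k\Bigr\|+2\Bigl\|\sum_{1\le k<l\le n}A_k^\ast A_l\Bigr\|\le C_{|q|}^2\,n\,\mu^2+2\Bigl\|\sum_{1\le k<l\le n}A_k^\ast A_l\Bigr\|, \]
so it suffices to find a constant $B=B(|q|)$, independent of $n$ and $d$, with $\bigl\|\sum_{k<l}A_k^\ast A_l\bigr\|\le B\,n\,\mu^2$; then $A(|q|)=(C_{|q|}^2+2B)^{1/2}$ works. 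I would obtain this, and hence the lemma, by strong induction on $n$, the case $n=1$ being just $\|A_1\|\le C_{|q|}\mu$.

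For the off-diagonal sum, fix $k<l$ and write $A_k^\ast A_l=M(a_{n-k}\otimes a_k^\ast)(\xi_k^\ast)\,M(a_l\otimes a_{n-l}^\ast)(\xi_l)$; normal-ordering the inner factor $a_k^\ast(\cdot)a_l(\cdot)$ by Corollary~\ref{creationannihilation2} and absorbing the outer creation $a_{n-k}(\cdot)$ and annihilation $a_{n-l}^\ast(\cdot)$ into the operators it produces yields
\[ A_k^\ast A_l=\sum_{m=0}^{k}q^{(k-m)(l-m)}\,M\!\bigl(a_{\,n-k+l-m}\otimes a_{\,k-m+n-l}^\ast\bigr)\bigl(\Xi_{k,l,m}\bigr), \]
where $\Xi_{k,l,m}$ is assembled from $\xi_k^\ast\otimes\xi_l$ by $R^\ast$-operators, a flip, and the contraction $\Phi_m$, exactly as in the reformulation preceding Corollary~\ref{creationannihilation2}. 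The summands with $m=0$ or $1\le m\le k-1$ carry a genuine geometric weight ($|q|^{kl}$ with $k\ge1,\,l\ge2$, resp.\ $|q|^{(k-m)(l-m)}$ with $(k-m)(l-m)\ge1$); grouping them by creation/annihilation length and estimating each group with Corollaries~\ref{generalproduct} and~\ref{norm_of_creation} and Lemmas~\ref{Rstar} and~\ref{tensorineq} gives a bound of order $\mu^2$ uniformly in $n$, because $\sum_{k\ge1}\sum_{l>k}|q|^{kl}<\infty$ and the analogous sums over $m\ge1$ converge. The summands with $m=k$ carry weight $1$ and are the delicate ones: here $\Xi_{k,l,k}$ contracts all $k$ ``holomorphic'' legs of $\xi_k^\ast$ against $k$ of the $l$ holomorphic legs of $\xi_l$, leaving a creation part of ``anti-holomorphic-then-holomorphic'' shape of length $(n-k)+(l-k)$ over an annihilation part of length $n-l\le n-1$. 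Applying $\|X\|^2=\|X^\ast X\|$ to $\sum_{k<l}$ of these $m=k$ terms and normal-ordering once more by Corollary~\ref{creationannihilation2}, the outer anti-holomorphic legs are removed — the $q$-deformed analog of the $a_{\bar i}^\ast a_j=0$ step in Section~\ref{freecase} — so that, up to $C_{|q|}$-factors and geometrically-summable remainders, $\|\sum_{k<l}(\text{$m=k$ terms})\|^2$ is bounded by a sum over $k$ (and a ``middle index'') of norm-squares of expressions of precisely the form in Lemma~\ref{KeyLem} but at size $(l-k)+(n-l)=n-k\le n-1$. By the inductive hypothesis each such expression is $\le A(n-k)(\cdots)$, and a Cauchy--Schwarz estimate on the contracted coefficients, as in the final display of Section~\ref{freecase}, bounds $\|\sum_{k<l}(\text{$m=k$ terms})\|$ by a $|q|$-dependent constant times $\sqrt A\,n\,\mu^2$. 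Altogether $\|T\|^2\le\bigl(c_1(|q|)+c_2(|q|)\sqrt A\bigr)n\,\mu^2$, and any $A=A(|q|)$ large enough that $c_1+c_2\sqrt A\le A$ closes the induction.

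The main obstacle is precisely this contraction step. In the $q=0$ case the relations $a_i^\ast a_j=\delta_{ij}$ and $a_{\bar i}^\ast a_j=0$ collapse each $A_k^\ast A_l$ to a single monomial and force the contributions of distinct $k$, and of the intermediate word, to be orthogonal, so that $\|\sum_{k<l}A_k^\ast A_l\|^2$ is literally a sum of norms of smaller instances. For $-1<q<1$ none of those products vanish: Corollary~\ref{creationannihilation2} replaces each by a $q$-weighted sum over all partial contractions, and after the second $\mathrm C^\ast$-squaring one faces a quadruply-indexed family of terms. Reorganizing this family so that, once the $|q|$-geometric series is factored out, what remains is honestly a strictly smaller instance of Lemma~\ref{KeyLem}, with constants that do \emph{not} accumulate over the $\sim n^2$ pairs $(k,l)$ nor over the $\sim n$ possible contraction sizes, is the technical heart of the argument. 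Lemmas~\ref{Rstar} and~\ref{tensorineq}, which compress $R_{k,n}$ and $\Phi_m$ into clean norm inequalities, are exactly what keep the final constant $A(|q|)$ finite and free of any dependence on $n$ or $d$.
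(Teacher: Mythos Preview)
Your outline is essentially the paper's own proof: the same $\mathrm C^\ast$-squaring, the same normal-ordering split via Corollary~\ref{creationannihilation2} into $0\le m\le k-1$ (geometrically damped) versus $m=k$ (undamped) pieces, a second $\mathrm C^\ast$-squaring of the $m=k$ block $\sum_k B_k$, and strong induction applied only to the fully-contracted sub-piece $B_{k,n-k}$, with Lemmas~\ref{Rstar} and~\ref{tensorineq} controlling all the $R^\ast$ and $\Phi$ factors along the way. Two quantitative slips to correct, neither fatal: the $0\le m\le k-1$ contribution is $O(n\mu^2)$, not $O(\mu^2)$, because the sum over the contraction index $m$ carries no decay of its own (this is exactly the factor $n$ in Lemma~\ref{step2}); and with $A$ normalized as in the statement of Lemma~\ref{KeyLem}, the recursion that closes the induction has the shape $c_1+c_2\sqrt{c_3+c_4A^2}\le A^2$ rather than $c_1+c_2\sqrt A\le A$, since the inductive piece $B_{k,n-k}$ is a \emph{positive} sum $\sum_s X_s^\ast X_s$ whose norm already contributes $A^2(n-k)$ before the outer square root (Lemma~\ref{step4}).
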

 To prove this Lemma, we estimate 5 operators obtained by the triangle inequality and Corollary  \ref{creationannihilation2}. First of all, we have (see Section \ref{freecase})
\begin{align*}
    \left\| \sum_{k=1}^{n}M (a_k\otimes a^*_{n-k})\xi_k\right\|^2
    &\le \sum_{k=1}^n \|A_k^*A_k\| + 2\left\|\sum_{k=1}^{n-1}\sum_{l=k+1}^n A_k^* A_l \right\| \\
    &=  \sum_{k=1}^n \|A_k\|^2 + 2\left\|\sum_{k=1}^{n-1}\sum_{l=k+1}^n A_k^* A_l \right\|
\end{align*}
where we set 
\[A_k = \sum_{k=1}^{n}M (a_k\otimes a^*_{n-k})\xi_k.\]
Subsequently, by applying Corollary \ref{creationannihilation2} and  
\begin{align*}
A_k^*A_l&=
   (M (a_k\otimes a^*_{n-k})\xi_k)^*M (a_l\otimes a^*_{n-l})\xi_l\\
   &=M (a_{n-k}\otimes a^*_{k}\otimes a_l\otimes a^*_{n-l}) (\xi_k^* \otimes \xi_l)\\
   &=\sum_{m=0}^kq^{(k-m)(l-m)}M (a_{n-k}\otimes  a_{l-m} \otimes a^*_{k-m} \otimes a^*_{n-l}) U_{\mathrm{(2,3)}}\Xi_{k,l,m} 
\end{align*}
where $\Xi_{k,l,m}\in H_2^{\otimes n-k}\otimes H_1^{\otimes k-m}\otimes H_1^{\otimes l-m}\otimes H_2^{\otimes n-l}$ is defined by
\[\Xi_{k,l,m}=(I\otimes \Phi_m \otimes I)(I\otimes R^*_{k-m,k}\otimes R^*_{m,l}\otimes I)(\xi_k^*\otimes \xi_l),   \]
and $U_{(2,3)}=I_{n-k}\otimes U_{\mathrm{flip}} \otimes I_{n-l}$ is the flip of the second and third components of $H_2^{\otimes n-k}\otimes H_1^{\otimes k-m}\otimes H_1^{\otimes l-m}\otimes H_2^{\otimes n-l}$. 
We decomposing this sum into two parts corresponding to $ 0\le m \le k-1$ and $m=k$, namely we have
\[A_k^*A_l=\sum_{m=0}^{k-1} A_{k,l,m} + A_{k,l,k}\]
where for $0\le m \le k-1$ 
\[A_{k,l,m}=q^{(k-m)(l-m)}M (a_{n-k}\otimes  a_{l-m} \otimes a^*_{k-m} \otimes a^*_{n-l}) U_{\mathrm{(2,3)}}\Xi_{k,l,m}\]
and for $m=k$
\begin{align*}
  A_{k,l,k}&=M(a_{n+l-2k}\otimes a^*_{n-l})(I_{n-k}\otimes \Phi_k \otimes I_{n-k})(I_{n}\otimes R^*_{k,l}\otimes I_{n-l})(\xi_k^*\otimes \xi_l)\\
  &=M(a_{n-k}\otimes a_{l-k}\otimes a^*_{n-l})(\Xi_{k,l})
  \end{align*}
  with
  \[\Xi_{k,l}=(I_{n-k}\otimes \Phi_k \otimes I_{n-k})(I_{n}\otimes R^*_{k,l}\otimes I_{n-l})(\xi_k^*\otimes \xi_l) \in H_2^{\otimes n-k} \otimes H_1^{\otimes l-k}\otimes H_2^{\otimes n-l}.\]
Thus by the triangle inequality, we have
\[
    \left\|\sum_{k=1}^{n-1}\sum_{l=k+1}^n A_k^* A_l \right\|\le \left\|\sum_{k=1}^{n-1}\sum_{l=k+1}^n\sum_{m=0}^{k-1}A_{k,l,m}\right\|+\left\|\sum_{k=1}^{n-1}\sum_{l=k+1}^nA_{k,l,k}\right\|
\]
As the next step, we estimate $ \left\|\sum_{k=1}^{n-1}\sum_{l=k+1}^nA_{k,l,k}\right\|^2$ in a similar way to the first step, yielding
\begin{align*}
    \left\|\sum_{k=1}^{n-1}\sum_{l=k+1}^nA_{k,l,k}\right\|^2&\le \sum_{k=1}^{n-1}\|B_k^*B_k\|+2\left\|\sum_{k_1=1}^{n-2}\sum_{k_2=k_1+1}^{n-1}B_{k_1}^*B_{k_2}\right\|
\end{align*}
where we put $B_k = \sum_{l=k+1}^n A_{k,l,k}$.
Finally, we expand $B_k^*B_k$ as follows
\[
      B_{k}^*B_{k}
      =\sum_{l_1,l_2}M(a_{n-l_1}\otimes a^*_{l_1-k}\otimes a^*_{n-k}\otimes  a_{n-k}\otimes a_{l_2-k}\otimes a^*_{n-l_2})(\Xi^*_{k,l_1}\otimes \Xi_{k,l_2}).
\]
      Then, we apply Corollary \ref{creationannihilation2} to the product of $ a^*_{n-k}$ and $a_{n-k}$, and $B_k^*B_k$ is equal to
\[\sum_{l_1,l_2}\sum_{m=0}^{n-k}q^{(n-k-m)^2} M(a_{n-l_1}\otimes a^*_{l_1-k}\otimes a_{n-k-m} \otimes a^*_{n-k-m} \otimes a_{l_2-k}\otimes a^*_{n-l_2}) (\Xi_{m})\]
  
  where $\Xi_m$ is 
  \[U_{(3,4)}(I \otimes \Phi_m \otimes I)(I\otimes R^*_{n-k-m,n-k}\otimes R^*_{m,n-k} \otimes I)(\Xi^*_{k,l_1}\otimes \Xi_{k,l_2}) \]
  with $U_{(3,4)}$ acting as the flip of third and fourth tensor components of $ H_2^{\otimes n-l_1}\otimes H_1^{\otimes l_1-k} \otimes H_2^{\otimes n-k-m} \otimes H_2^{\otimes n-k-m}\otimes H_1^{\otimes l_2-k} \otimes H_2^{\otimes n-l_2}$. Note that our notation says $(I\otimes \Phi_m \otimes I)$ acts on \[ H_2^{\otimes n-l_1}\otimes H_1^{\otimes l_1-k} \otimes H_2^{\otimes n-k-m} \otimes H_2^{\otimes m} \otimes H_2^{\otimes m} \otimes H_2^{\otimes n-k-m}\otimes H_1^{\otimes l_2-k} \otimes H_2^{\otimes n-l_2}\] 
  where we apply $\Phi_m$ to the fourth and fifth tensor components and the identity operator to other tensor components. 
  We decompose the sum into the two parts corresponding to $0\le m \le n-k-1$ and $m=n-k$, namely we have 
\[B_k^*B_k=\sum_{m=0}^{n-k-1}B_{k,m}+B_{k,n-k}.\]
Thus, by the triangle inequality, we have
\[\|B_k^*B_k\|\le \sum_{m=0}^{n-k-1}\|B_{k,m}\|+\|B_{k,n-k}\|. \]
According to this decomposition, we need to estimate the following 5 terms:
\begin{gather*}
    \sum_{k=1}^n \|A_k\|^2, \quad \left\|\sum_{k=1}^{n-1}\sum_{l=k+1}^n\sum_{m=0}^{k-1}A_{k,l,m}\right\|,\quad  \left\|\sum_{k_1=1}^{n-2}\sum_{k_2=k_1+1}^{n-1}B_{k_1}^*B_{k_2}\right\|, \\
   \quad  \sum_{k=1}^{n-1}\sum_{m=0}^{n-k-1}\|B_{k,m}\|, \quad \sum_{k=1}^{n-1}\|B_{k,n-k}\|.
\end{gather*}
In the following lemmas, we give a bound for each term and prove Lemma \ref{KeyLem} by combining them. 
\begin{lm}\label{step1}
    \[\sum_{k=1}^n \|A_k\|^2 \le C_{|q|}^2 n \max_{1\le k \le n} \left\| \xi_k \right\|_{\mathcal{F}_q(H)^{\otimes 2}}^2.\]
\end{lm}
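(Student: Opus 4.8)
The plan is to bound each summand $\|A_k\|$ individually using Corollary \ref{generalproduct} and then sum trivially. Recall that $A_k = M(a_k\otimes a^*_{n-k})\xi_k$ with $\xi_k\in H_1^{\otimes k}\otimes H_2^{\otimes n-k}\subset H^{\otimes k}\otimes H^{\otimes n-k}$. Corollary \ref{generalproduct} applies directly (with $n$ there replaced by $k$ and $m$ there replaced by $n-k$), giving
\[ \|A_k\| = \|M(a_k\otimes a^*_{n-k})\xi_k\| \le C_{|q|}\,\|\xi_k\|_{\mathcal{F}_q(H)^{\otimes 2}}. \]
Squaring and using $\|\xi_k\|_{\mathcal{F}_q(H)^{\otimes 2}}\le \max_{1\le j\le n}\|\xi_j\|_{\mathcal{F}_q(H)^{\otimes 2}}$ for every $k$, we obtain $\|A_k\|^2 \le C_{|q|}^2 \max_{1\le j\le n}\|\xi_j\|_{\mathcal{F}_q(H)^{\otimes 2}}^2$.

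Summing this bound over $k=1,\ldots,n$ yields the claim. There is no real obstacle here — this is the easiest of the five terms identified in the decomposition preceding Lemma \ref{step1}, since it requires only the product norm bound of Corollary \ref{generalproduct} applied term by term, with no need for Corollary \ref{creationannihilation2}, Lemma \ref{Rstar}, or Lemma \ref{tensorineq}. The only mild point to note is the uniformity of the constant $C_{|q|}$ across all $k$, which is automatic from Lemma \ref{NCbinom} (the bound $\|R_{k,n}\|\le C_{|q|}$ and hence $P^{(n)}\le C_{|q|}(P^{(k)}\otimes P^{(n-k)})$ hold with the same constant for all $k$).
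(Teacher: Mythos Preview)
Your proof is correct and follows essentially the same approach as the paper: apply Corollary \ref{generalproduct} to each $A_k$ to get $\|A_k\|\le C_{|q|}\|\xi_k\|_{\mathcal{F}_q(H)^{\otimes 2}}$, then bound by the maximum and sum over $k$. The paper's version is simply a one-line compression of the same argument.
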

\begin{proof}
    By Corollary \ref{generalproduct}, we have
    \[\sum_{k=1}^n \|A_k\|^2 \le n \max_{1\le k \le n}\|M(a_k\otimes a^*_{n-k})\xi_k\|^2 \le C_{|q|}^2 n \max_{1\le k\le n}\left\| \xi_k \right\|_{\mathcal{F}_q(H)^{\otimes 2}}^2.\]
\end{proof}
\begin{lm}\label{step2}
There exists $D_1=D_1(|q|)>0$ such that 
    \[\left\|\sum_{k=1}^{n-1}\sum_{l=k+1}^n\sum_{m=0}^{k-1}A_{k,l,m}\right\|\le D_1 n \max_{1 \le k \le n} \left\| \xi_k\right\|_{\mathcal{F}_q(H)^{\otimes 2}}^2\]
\end{lm}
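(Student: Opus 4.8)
The plan is to bound $\left\|\sum_{k,l,m} A_{k,l,m}\right\|$ by the crude triangle inequality $\sum_{k,l,m}\|A_{k,l,m}\|$, after showing that each $\|A_{k,l,m}\|$ carries the prefactor $|q|^{(k-m)(l-m)}$ times a $q$-dependent constant times $\max_j\|\xi_j\|_{\mathcal{F}_q(H)^{\otimes 2}}^2$. Since $\sum_{l>k}\sum_{m<k}|q|^{(k-m)(l-m)}$ is bounded uniformly in $k$ and $n$, the only surviving growth comes from the outer sum over $k$, giving the factor $n$.

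First I would dispose of the operator-norm part. Concatenating the two creation legs and, separately, the two annihilation legs shows $M(a_{n-k}\otimes a_{l-m}\otimes a^*_{k-m}\otimes a^*_{n-l}) = M(a_{(n-k)+(l-m)}\otimes a^*_{(k-m)+(n-l)})$ on the appropriate tensor space, so Corollary \ref{generalproduct} gives $\|A_{k,l,m}\|\le |q|^{(k-m)(l-m)}C_{|q|}\|U_{(2,3)}\Xi_{k,l,m}\|_{\mathcal{F}_q(H)^{\otimes 2}}$, where the two copies of $\mathcal{F}_q(H)$ group the first two and the last two of the four tensor legs. The flip $U_{(2,3)}$ permutes two adjacent legs and is therefore isometric for the tensorised $q$-norm, and collapsing the four legs into two pairs costs at most a factor $C_{|q|}$ by $P^{(n)}\le C_{|q|}(P^{(k)}\otimes P^{(n-k)})$ from Lemma \ref{NCbinom}; hence $\|U_{(2,3)}\Xi_{k,l,m}\|_{\mathcal{F}_q(H)^{\otimes 2}}\le C_{|q|}\|\Xi_{k,l,m}\|_{\mathcal{F}_q(H)^{\otimes 4}}$ (four-fold tensor of $q$-norms with legs $n-k,\,k-m,\,l-m,\,n-l$).

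Next I would estimate $\|\Xi_{k,l,m}\|_{\mathcal{F}_q(H)^{\otimes 4}}$. Writing the argument of $\Phi_m$ as $\big[(I\otimes R^*_{k-m,k})\xi_k^\ast\big]\otimes\big[(R^*_{m,l}\otimes I)\xi_l\big]$, with the contraction taking place between the two adjacent middle copies of $H_1^{\otimes m}$, Lemma \ref{tensorineq} applied with the anti-unitary $\ast$ on $H_1^{\otimes m}$ (anti-unitary for the $q$-inner product by \cite[Theorem 2.2 (c)]{MR1811255}) gives $\|\Xi_{k,l,m}\|_{\mathcal{F}_q(H)^{\otimes 4}} \le \|(I\otimes R^*_{k-m,k})\xi_k^\ast\|_{\mathcal{F}_q(H)^{\otimes 3}}\,\|(R^*_{m,l}\otimes I)\xi_l\|_{\mathcal{F}_q(H)^{\otimes 3}}$. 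Each $R^*$ factor is controlled by Lemma \ref{Rstar} applied slice-wise (the spectator $H_2^{\otimes n-k}$, resp. $H_2^{\otimes n-l}$, legs ride along and orthogonality of an orthonormal basis of the spectator slice lets the bound pass through), so these two factors are $\le C_{|q|}^{1/2}\|\xi_k^\ast\|_{\mathcal{F}_q(H)^{\otimes 2}}$ and $\le C_{|q|}^{1/2}\|\xi_l\|_{\mathcal{F}_q(H)^{\otimes 2}}$. Finally, on a ``mixed'' tensor space $H_1^{\otimes a}\otimes H_2^{\otimes b}$ only block-diagonal permutations contribute to the $q$-symmetrisation (since $H_1\perp H_2$), so $P^{(a+b)}$ restricts to $P^{(a)}\otimes P^{(b)}$ there and the full $q$-Fock norm coincides with the tensorised $q$-norm; together with anti-unitarity of $\ast$ on $H^{\otimes n}$ this yields $\|\xi_k^\ast\|_{\mathcal{F}_q(H)^{\otimes 2}} = \|\xi_k\|_{\mathcal{F}_q(H)^{\otimes 2}}$. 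Combining everything, $\|A_{k,l,m}\|\le C_{|q|}^{3}\,|q|^{(k-m)(l-m)}\max_{1\le j\le n}\|\xi_j\|_{\mathcal{F}_q(H)^{\otimes 2}}^2$.

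To finish, summing the prefactors: with $a=k-m\ge1$ and $r=l-k\ge1$,
\[ \sum_{l=k+1}^{n}\sum_{m=0}^{k-1}|q|^{(k-m)(l-m)} \le \sum_{a\ge1}\sum_{r\ge1}|q|^{a(a+r)} = \sum_{a\ge1}|q|^{a^{2}}\frac{|q|^{a}}{1-|q|^{a}} =: c(|q|) < \infty, \]
which is independent of $k$ and $n$; summing the bound over $k=1,\dots,n-1$ gives the lemma with $D_1(|q|) = C_{|q|}^{3}c(|q|)$. The main obstacle will be the bookkeeping in the second and third steps: one must keep careful track of which tensor-leg grouping the $q$-norm refers to at each stage — in particular after the flip $U_{(2,3)}$, inside the $\Phi_m$ contraction, and when passing between the genuine $q$-Fock norm on $H^{\otimes N}$ and the tensorised norm on a tensor decomposition — even though each such change is harmless, costing only a bounded power of $C_{|q|}$ by Lemma \ref{NCbinom}. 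Once the factor $|q|^{(k-m)(l-m)}$ has been isolated, the summation itself is routine.
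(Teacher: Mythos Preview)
Your proof is correct and, in fact, slightly more elementary than the paper's. The paper does not take the full triangle inequality over all three indices $k,l,m$; instead it rearranges the sum as $\sum_{m}\sum_{l}\|\sum_{k}A_{k,l+k,m}\|$, keeping the $k$-sum inside the norm, and then exploits the orthogonality of the subspaces $H_2^{\otimes n-k}\otimes H_1^{\otimes l+k-m}$ (resp.\ $H_1^{\otimes k-m}\otimes H_2^{\otimes n-l-k}$) for different values of $k$ to turn that inner norm into a Pythagorean sum of squares. The factor $n$ then arises from the outer sum over $m$, while the $l$-sum is geometric.

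Your route bypasses this orthogonality argument entirely: you bound each $\|A_{k,l,m}\|$ individually and observe that the double sum $\sum_{l>k}\sum_{m<k}|q|^{(k-m)(l-m)}$ is bounded uniformly in $k,n$, so the factor $n$ comes from the $k$-sum instead. This is perfectly valid and arguably cleaner for this lemma. The price is a slightly worse constant ($C_{|q|}^{3}$ versus the paper's $C_{|q|}^{2}$; note that your step passing from $\mathcal{F}_q(H)^{\otimes 2}$ to $\mathcal{F}_q(H)^{\otimes 4}$ could in fact be taken as an equality using the very $H_1\perp H_2$ observation you invoke later, recovering $C_{|q|}^{2}$). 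The paper's orthogonality technique is not essential here, but it foreshadows the mechanism that \emph{is} essential in the later estimates for the $B_k$ terms, where a full triangle inequality would be too crude.
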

\begin{proof}
   We replace $l-k$ with $l$ and rearrange the summands. Then we have
   \begin{align*}
       \left\|\sum_{k=1}^{n-1}\sum_{l=k+1}^n\sum_{m=0}^{k-1}A_{k,l,m}\right\|&=\left\|\sum_{m=0}^{n-2}\sum_{l=1}^{n-m-1}\sum_{k=m+1}^{n-l}A_{k,l+k,m}\right\|\\
       &\le \sum_{m=0}^{n-2}\sum_{l=1}^{n-m-1}\left\|\sum_{k=m+1}^{n-l}A_{k,l+k,m}\right\|.
   \end{align*}
Note that $\sum_{k=m+1}^{n-l} A_{k,l+k,m}$ is equal to
\begin{align*}
&\sum_{k=m+1}^{n-l}q^{(k-m)(l+k-m)}M (a_{n-k}\otimes  a_{l+k-m} \otimes a^*_{k-m} \otimes a^*_{n-l-k}) U_{\mathrm{(2,3)}}\Xi_{k,l+k,m}\\ 
&=M (a_{n+l-m} \otimes a^*_{n-l-m})\sum_{k=m+1}^{n-l}q^{(k-m)(l+k-m)}U_{\mathrm{(2,3)}} \Xi_{k,l+k,m}.
\end{align*}
Thus, by applying Corollary \ref{generalproduct}, we have
\[\left\|\sum_{m=0}^{k-1}A_{k,l,m}\right\|\le C_{|q|}\left\|\sum_{k=m+1}^{n-l}q^{(k-m)(l+k-m)}U_{\mathrm{(2,3)}}\Xi_{k,l+k,m}\right\|_{\mathcal{F}_q(H)^{\otimes 2}}. \]
Note that $U_{\mathrm{(2,3)}}\Xi_{k,l+k,m} \in  H_2^{\otimes n-k}\otimes  H_1^{\otimes l+k-m}\otimes  H_1^{\otimes k-m}\otimes H_2^{\otimes n-l-k}$ and we consider the norm on the tensor product of $H_2^{\otimes n-k}\otimes  H_1^{\otimes l+ k-m}\subset H^{\otimes n+l-m}$ ($\subset \mathcal{F}_q(H)$) and $ H_1^{\otimes k-m}\otimes H_2^{\otimes n-l-k}\subset H^{\otimes n-l-m}$ ($\subset \mathcal{F}_q(H)$). For fixed $l,m$, the number of tensor factors of $H_1$ and $H_2$ in $H_2^{\otimes n-k}\otimes  H_1^{\otimes l+k-m}$ (similarly in $H_1^{\otimes k-m}\otimes H_2^{\otimes n-l-k}$) differ as $k$ changes. Since $H_1$ is orthogonal to $H_2$, we have for $k\neq k'$, 
\begin{align*}
H_2^{\otimes n-k}\otimes  H_1^{\otimes l+k-m}&\perp H_2^{\otimes n-k'}\otimes  H_1^{\otimes l+ k'-m}\\ \ H_1^{\otimes k-m}\otimes H_2^{\otimes n-l-k}&\perp H_1^{\otimes k'-m}\otimes H_2^{\otimes n-l-k'}
\end{align*}
with respect to the $q$-inner product.
The orthogonality of $H_1$ and $H_2$ also tells that for $\xi,\xi' \in H_2^{\otimes n-k}\otimes  H_1^{\otimes l+k-m}$ and $\eta,\eta' \in H_1^{\otimes k-m}\otimes H_2^{\otimes n-l-k}$,
\begin{align*}
  \langle P^{(n+l-m)} \xi ,\xi' \rangle &=\langle P^{(n-k)}\otimes P^{(l+k-m)}\xi,\xi'\rangle \\
  \langle P^{(n-l-m)} \eta ,\eta' \rangle &=\langle P^{(k-m)}\otimes P^{(n-l-k)}\eta,\eta'\rangle
\end{align*}
Thus, we get
\begin{align*}&\left\|\sum_{k=m+1}^{n-l}q^{(k-m)(l+k-m)}U_{\mathrm{(2,3)}}\Xi_{k,l+k,m}\right\|_{\mathcal{F}_q(H)^{\otimes 2}}\\
&=\left(\sum_{k=m+1}^{n-l}|q|^{2(k-m)(l+k-m)}\left\|U_{\mathrm{(2,3)}}\Xi_{k,l+k,m}\right\|^2_{\mathcal{F}_q(H)^{\otimes 4}}\right)^{\frac{1}{2}}
\end{align*}
Since $U_{(2,3)}$ is the transposition of the second and third tensor components in $\mathcal{F}_q(H)^{\otimes 4}$, $U_{(2,3)}$ is unitary on $\mathcal{F}_q(H)^{\otimes 4}$ and thus

\[ \left\|U_{\mathrm{(2,3)}}\Xi_{k,l+k,m}\right\|^2_{\mathcal{F}_q(H)^{\otimes 4}}=\left\|\Xi_{k,l+k,m}\right\|^2_{\mathcal{F}_q(H)^{\otimes 4}}. \]

Note that we can write $\Xi_{k,l+k,m}$ as
\begin{align*}
\Xi_{k,l+k,m}&=(I\otimes \Phi_m \otimes I)(I\otimes R^*_{k-m,k}\otimes R^*_{m,l+k}\otimes I)(\xi_k^*\otimes \xi_{l+k})\\
&=(I\otimes \Phi_m \otimes I)\left[(I\otimes R^*_{k-m,k})\xi^*_k\otimes (R^*_{m,l+k}\otimes I)\xi_{l+k} \right].
\end{align*}
By Lemma \ref{tensorineq}, we have
\[ \left\|\Xi_{k,l+k,m}\right\|^2_{\mathcal{F}_q(H)^{\otimes 4}} \le \|(I\otimes R^*_{k-m,k})\xi^*_k\|^2_{\mathcal{F}_q(H)^{\otimes 3}} \|(R^*_{m,l+k}\otimes I)\xi_{l+k}\|^2_{\mathcal{F}_q(H)^{\otimes 3}}. \]
By using Lemma \ref{Rstar} and the fact $\ast$ is anti-unitary, we have
\begin{align*}
  \|(I\otimes R^*_{k-m,k})\xi^*_k\|^2_{\mathcal{F}_q(H)^{\otimes 3}}&\le C_{|q|}  \|\xi^*_k\|^2_{\mathcal{F}_q(H)^{\otimes 2}}=  C_{|q|}  \|\xi_k\|^2_{\mathcal{F}_q(H)^{\otimes 2}}\\
  \|(R^*_{m,l+k}\otimes I)\xi_{l+k}\|^2_{\mathcal{F}_q(H)^{\otimes 3}}&\le C_{|q|}\|\xi_{l+k}\|^2_{\mathcal{F}_q(H)^{\otimes 2}}.
\end{align*}
Therefore, we have
\begin{align*}
 &\left\|\sum_{k=m+1}^{n-l}q^{(k-m)(l+k-m)}U_{\mathrm{(2,3)}}\Xi_{k,l+k,m}\right\|_{\mathcal{F}_q(H)^{\otimes 2}}\\
 &\le C_{|q|}\max_{1\le k \le n}\|\xi_k\|^2_{\mathcal{F}_q(H)^{\otimes 2}} 
\left(\sum_{k=m+1}^{n-l}|q|^{2(k-m)(l+k-m)}\right)^{\frac{1}{2}}.
\end{align*}
Combining all of the above estimates, we conclude
\begin{align*}
    &\left\|\sum_{k=1}^{n-1}\sum_{l=k+1}^n\sum_{m=0}^{k-1}A_{k,l,m}\right\|\\
    &\le C_{|q|}^2 \max_{1\le k \le n}\|\xi_k\|^2_{\mathcal{F}_q(H)^{\otimes 2}}  \sum_{m=0}^{n-2}\sum_{l=1}^{n-m-1}\left(\sum_{k=m+1}^{n-l}|q|^{2(k-m)(l+k-m)}\right)^{\frac{1}{2}}\\
    &\le  C_{|q|}^2 \max_{1\le k \le n}\|\xi_k\|^2_{\mathcal{F}_q(H)^{\otimes 2}}  \sum_{m=0}^{n-2}\sum_{l=1}^{n-m-1}|q|^l\left(\sum_{k=m+1}^{n-l}|q|^{2(k-m)(k-m)}\right)^{\frac{1}{2}}\\
    &\le D_1 n \max_{1\le k \le n}\|\xi_k\|^2_{\mathcal{F}_q(H)^{\otimes 2}},
\end{align*}
where $D_1=C_{|q|}^2  \left(\sum_{l=1}^{\infty}|q|^{l}\right)\sqrt{\sum_{k=1}^{\infty} q^{2k^2}}$.
\end{proof}
The next step is to estimate $\left\|\sum_{k_1=1}^{n-2}\sum_{k_2=k_1+1}^{n-1}B_{k_1}^*B_{k_2}\right\|$ and $\sum_{k=1}^{n-1}\sum_{m=0}^{n-k-1}\|B_{k,m}\|$.
\begin{lm}\label{step3}
There exists $D_2=D_2(|q|)>0$ such that
    \begin{align*}
     \sum_{k=1}^{n-1}\sum_{m=0}^{n-k-1}\|B_{k,m}\|&\le D_2 n \max_{1 \le k \le n} \left\| \xi_k \right\|_{\mathcal{F}_q(H)^{\otimes 2}}^4\\
       \left\|\sum_{k_1=1}^{n-2}\sum_{k_2=k_1+1}^{n-1}B_{k_1}^*B_{k_2}\right\|&\le D_2 n^2 \max_{1 \le k \le n} \left\| \xi_k \right\|_{\mathcal{F}_q(H)^{\otimes 2}}^4
    \end{align*}
\end{lm}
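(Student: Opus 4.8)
The plan is to establish each of the two estimates by the mechanism already deployed in Lemma \ref{step2}, applied one layer deeper. For the first estimate I would fix $k$ and bound $\sum_{m=0}^{n-k-1}\|B_{k,m}\|$ by a constant depending only on $|q|$ (in particular independent of $n$ and $k$), times $\max_j\|\xi_j\|_{\mathcal{F}_q(H)^{\otimes 2}}^4$; summing over the $n-1$ values of $k$ then produces the factor $n$. The key sub-claim is $\|B_{k,m}\|\le C'_{|q|}\,|q|^{(n-k-m)^2}\max_j\|\xi_j\|^4$, after which $\sum_m|q|^{(n-k-m)^2}\le\sum_{j\ge 1}|q|^{j^2}<\infty$ closes the argument. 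To prove this sub-claim, recall that $B_{k,m}=q^{(n-k-m)^2}\sum_{l_1,l_2}M(a_{n-l_1}\otimes a^*_{l_1-k}\otimes a_{n-k-m}\otimes a^*_{n-k-m}\otimes a_{l_2-k}\otimes a^*_{n-l_2})(\Xi_m)$, where $\Xi_m=\Xi_m(l_1,l_2)$ is built from $\Xi^*_{k,l_1}\otimes\Xi_{k,l_2}$ by applying $R^*$-operators to certain $H_2$-blocks and contracting with $\Phi_m$. I would (i) bring each summand into normal order $M(a_\bullet\otimes a^*_\bullet)$ by two more applications of Corollary \ref{creationannihilation2} to the interior anti-ordered pairs $a^*_\bullet a_\bullet$, which injects additional summation parameters and extra $|q|$-factors; (ii) invoke Corollary \ref{generalproduct}, and then use the orthogonality $H_1\perp H_2$ together with the fact that the summand shifts degree by $2(l_2-l_1)$ to replace the norm of the sum over $l_1,l_2$ (and the new parameters) by an $\ell^2$-sum, which the $|q|$-decay produced in step (i) sums to a constant; (iii) bound each vector $\Xi_m(l_1,l_2)$ by $C_{|q|}^{2}\|\Xi_{k,l_1}\|\,\|\Xi_{k,l_2}\|$ via Lemma \ref{tensorineq} (for the $\Phi_m$-contraction) and Lemma \ref{Rstar}, and then $\|\Xi_{k,l}\|\le C_{|q|}\max_j\|\xi_j\|^2$ by applying Lemma \ref{tensorineq} and Lemma \ref{Rstar} once more inside the definition of $\Xi_{k,l}$, using that $\ast$ is anti-unitary.

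For the second estimate I would expand $B_{k_1}^*B_{k_2}$ just as $B_k^*B_k$ was expanded — normal-ordering the interior pair $a^*_{n-k_1}a_{n-k_2}$ by Corollary \ref{creationannihilation2}, then the remaining anti-ordered pairs — and then rearrange the sum over $k_1<k_2$ together with $l_1,l_2$ and the contraction parameters so that, after a reindexing in the spirit of the substitution $l\mapsto l+k$ used in Lemma \ref{step2}, the leading and trailing creation/annihilation operators no longer depend on the index summed innermost. Applying Corollary \ref{generalproduct}, using $H_1\perp H_2$ to pass to $\ell^2$-sums, bounding the vectors by $C_{|q|}\max_j\|\xi_j\|^4$ with Lemma \ref{tensorineq} and Lemma \ref{Rstar}, and summing the resulting convergent $|q|$-series, one is left — as in the free-case argument in the introduction, where the analogous step used the Cauchy--Schwarz inequality over the coefficient sums — with a double sum over $k_1,k_2$ of bounded contributions, producing $n^2$.

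The main obstacle is precisely this bookkeeping. Through the nested applications of Corollary \ref{creationannihilation2} one must keep exact track of which tensor legs are $H_1$-type and which are $H_2$-type, which legs are contracted against which (and by which $\Phi$), and how the $R^*$-operators redistribute the $H_2$-legs, in order both to justify replacing the norm of a large sum by an $\ell^2$-sum and to harvest enough $|q|$-decay in the indices $l_1,l_2$ (and $k_1,k_2$) for those $\ell^2$-sums to converge uniformly in $n$. Substituting a crude triangle-inequality estimate for either of these structural reductions would inflate the bounds to $n^{3}$ or worse rather than $n$ and $n^{2}$; securing the sharp powers is exactly the content — and the difficulty — of the computation underlying Lemma \ref{KeyLem}.
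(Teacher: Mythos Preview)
Your plan is essentially the paper's own argument, and the key sub-claim $\|B_{k,m}\|\le C'_{|q|}\,|q|^{(n-k-m)^2}\max_j\|\xi_j\|^4$ is exactly what the paper establishes. Two points of divergence are worth flagging.

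First, a small miscount in step (i): two applications of Corollary~\ref{creationannihilation2} do not suffice to reach normal order. The visible anti-ordered pairs in $B_{k,m}$ sit at positions $(2,3)$ and $(4,5)$, and since these mix $H_1$- and $H_2$-legs the corollary collapses to the bare commutation $a^*_{H_1}a_{H_2}=q^{\,\cdot}\,a_{H_2}a^*_{H_1}$, contributing the factors $|q|^{(l_i-k)(n-k-m)}$. But after these two swaps a \emph{new} anti-ordered pair $a^*_{l_1-k}a_{l_2-k}$ appears (both $H_1$-type), and a third, genuinely nontrivial, application of Corollary~\ref{creationannihilation2} is needed, producing the extra index $m'$ and the factor $|q|^{(l_1-k-m')(l_2-k-m')}$. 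The paper carries this out explicitly, and then packages the resulting double contraction $(I\otimes\Phi_{m'}\otimes I)\,U\,(I\otimes\Phi_m\otimes I)$ into a separate lemma (Lemma~\ref{doublePhi}); that lemma is, as you anticipate, two uses of Lemma~\ref{tensorineq} after a permutation of tensor legs.

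Second, and more substantively: your step (ii) is unnecessary, and your stated reason for it is incorrect. The paper does \emph{not} pass to an $\ell^2$-sum via $H_1\perp H_2$ at this stage; it simply applies the triangle inequality over $l_1,l_2,m'$ termwise. Contrary to your final paragraph, this does \emph{not} inflate the exponent of $n$. The point is that the two trivial commutations in step (i) already produce $|q|^{(l_1-k)(n-k-m)+(l_2-k)(n-k-m)}$ with $n-k-m\ge 1$, so the $l_1,l_2$-sums are geometric and converge to constants independent of $n,k,m$; the $m'$-sum is handled similarly. After that, $\sum_{m=0}^{n-k-1}|q|^{(n-k-m)^2}$ is bounded uniformly in $k$, and the lone sum over $k$ supplies exactly the factor $n$. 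For the second inequality the same mechanism works with $k_1<k_2$ replacing the single $k$; the analogous $|q|$-series again converge uniformly, and the free double sum over $k_1,k_2$ gives $n^2$. In short, keep all the $|q|$-weights that normal-ordering hands you and the triangle inequality is already sharp; the orthogonality gymnastics you propose would work but buy nothing here.
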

\begin{proof}
 Recall that $B_{k,m}$ is equal to
 \[\sum_{l_1,l_2=k+1}^{n}q^{(n-k-m)^2} M(a_{n-l_1}\otimes a^*_{l_1-k}\otimes a_{n-k-m} \otimes a^*_{n-k-m} \otimes a_{l_2-k}\otimes a^*_{n-l_2}) (\Xi_{m})\]
  
  where $\Xi_m$ is 
  \[U_{(3,4)}(I \otimes \Phi_m \otimes I)(I\otimes R^*_{n-k-m,n-k}\otimes R^*_{m,n-k} \otimes I)(\Xi^*_{k,l_1}\otimes \Xi_{k,l_2}) \]
  which is a vector in $ H_2^{\otimes n-l_1}\otimes H_1^{\otimes l_1-k} \otimes H_2^{\otimes n-k-m} \otimes H_2^{\otimes n-k-m}\otimes H_1^{\otimes l_2-k} \otimes H_2^{\otimes n-l_2}$. Since $H_1$ is orthogonal to $H_2$, the $q$-commutation relations tell us, for any non-negative integers $k,l$,
  \begin{align*}
   M(a^*_k\otimes a_l)|_{H_1^{\otimes k}\otimes H_2^{\otimes l}}&=q^{kl}M(a_l\otimes a^*_k)U_{\mathrm{flip}}|_{H_1^{\otimes k}\otimes H_2^{\otimes l}}\\
   M(a^*_k\otimes a_l)|_{H_2^{\otimes k}\otimes H_1^{\otimes l}}&=q^{kl}M(a_l\otimes a^*_k)U_{\mathrm{flip}}|_{H_2^{\otimes k}\otimes H_1^{\otimes l}}.
   \end{align*}
   In particular, we have
   \begin{align*}
       &M(a_{n-l_1}\otimes a^*_{l_1-k}\otimes a_{n-k-m} \otimes a^*_{n-k-m} \otimes a_{l_2-k}\otimes a^*_{n-l_2}) (\Xi_{m})\\
       &= q^{\sum_{i=1}^2(l_i-k)(n-k-m)}M(a_{n-l_1}\otimes  a_{n-k-m}\otimes a^*_{l_1-k} \otimes   a_{l_2-k}\otimes a^*_{n-k-m}\otimes a^*_{n-l_2}) (\Xi'_{m})
   \end{align*}
   where $\Xi'_{m}=U_{(2,3)(4,5)}\Xi_m$ and $U_{(2,3)(4,5)}$ acts as the permutaion $(2,3)(4,5)$ on $ H_2^{\otimes n-l_1}\otimes H_1^{\otimes l_1-k} \otimes H_2^{\otimes n-k-m} \otimes H_2^{\otimes n-k-m}\otimes H_1^{\otimes l_2-k} \otimes H_2^{\otimes n-l_2}$.
   By applying Corollary \ref{creationannihilation2} to the product of $a^*_{l_1-k}$ and $a_{l_2-k}$, and we have
   \begin{align*}
       &M(a_{n-l_1}\otimes  a_{n-k-m}\otimes a^*_{l_1-k} \otimes   a_{l_2-k}\otimes a^*_{n-k-m}\otimes a^*_{n-l_2}) (\Xi'_{m})\\
       &=\sum_{m'=0}^{\min(l_1-k,l_2-k)}q^{\prod_{i=1}^2(l_i-k-m')}\\
       &\qquad M(a_{n-l_1}\otimes  a_{n-k-m}\otimes  a_{l_2-k-m'}\otimes a^*_{l_1-k-m'}  \otimes a^*_{n-k-m}\otimes a^*_{n-l_2}) (\Xi''_{m,m'})
   \end{align*}
   where $\Xi''_{m,m'}$ is 
   \[ U_{(3,4)} (I\otimes \Phi_{m'}\otimes I)(I\otimes R^*_{l_1-k-m',l_1-k}\otimes R^*_{m',l_2-k} \otimes I)(\Xi'_{m}),\]
   which is in $ H_2^{\otimes n-l_1}\otimes H_2^{\otimes n-k-m}  \otimes  H_1^{\otimes l_2-k-m'}\otimes  H_1^{\otimes l_1-k-m'} \otimes H_2^{\otimes n-k-m}\otimes H_2^{\otimes n-l_2}$, and $U_{(3,4)}$ is the flip of $ H_1^{\otimes l_1-k-m'}$ and $H_1^{\otimes l_2-k-m'}$ in this Hilbert space. Note that $(I\otimes \Phi_{m'} \otimes I)$ acts on 
   \[H_2^{\otimes n-l_1}\otimes H_2^{\otimes n-k-m}  \otimes  H_1^{\otimes l_1-k-m'}\otimes H_1^{\otimes m'} \otimes H_1^{\otimes m'}\otimes  H_1^{\otimes l_2-k-m'} \otimes H_2^{\otimes n-k-m}\otimes H_2^{\otimes n-l_2} \]
   where we apply $\Phi_{m'}$ to the fourth and fifth tensor components and the identity operators to other tensor components. 
   By Corollary \ref{generalproduct}, we have
   \begin{align*}
      & \left\|M(a_{n-l_1}\otimes  a_{n-k-m}\otimes  a_{l_2-k-m'}\otimes a^*_{l_1-k-m'}  \otimes a^*_{n-k-m}\otimes a^*_{n-l_2}) (\Xi''_{m,m'})\right\|\\
      &\le C_{|q|} \|\Xi''_{m,m'} \|_{\mathcal{F}_q(H)^{\otimes 2}}
   \end{align*}
   where we consider the norm on the right-hand side in the tensor product of two Hilbert spaces $H_2^{\otimes n-l_1}\otimes H_2^{\otimes n-k-m}  \otimes  H_1^{\otimes l_2-k-m'}\subset H^{\otimes 2n-2k-l_1+l_2-m-m'}$ and $H_1^{\otimes l_1-k-m'} \otimes H_2^{\otimes n-k-m}\otimes H_2^{\otimes n-l_2}\subset H^{\otimes 2n-2k+l_1-l_2-m-m'}$. By Lemma \ref{NCbinom}, we have
   \begin{align*}
       P^{(2n-2k-l_1+l_2-m-m')}&\le C_{|q|} P^{(n-l_1)}\otimes P^{(n-2k+l_2-m-m')}\\
       P^{(2n-2k+l_1-l_2-m-m')}&\le C_{|q|}P^{(n-2k+l_1-m-m')}\otimes P^{(n-l_2)}.
   \end{align*}
   Thanks to these inequalities and the orthogonality of $H_1$ and $H_2$, we have
   \[  \|\Xi''_{m,m'} \|_{\mathcal{F}_q(H)^{\otimes 2}}\le C_{|q|}\|\Xi''_{m,m'} \|_{\mathcal{F}_q(H)^{\otimes 6}}.\]
   Since $U_{(3,4)}$ is unitary on $\mathcal{F}_q(H)^{\otimes 6}$, we have
   \[\|\Xi''_{m,m'} \|_{\mathcal{F}_q(H)^{\otimes 6}}=\| (I\otimes \Phi_{m'}\otimes I)(I\otimes R^*_{l_1-k-m',l_1-k}\otimes R^*_{m',l_2-k} \otimes I)(\Xi'_{m})\|_{\mathcal{F}_q(H)^{\otimes 6}}.\]
   We use the identity
   \begin{align*}
   & (I\otimes R^*_{l_1-k-m',l_1-k}\otimes R^*_{m',l_2-k} \otimes I)U_{(2,3)(4,5)(3,4)}(I \otimes \Phi_m \otimes I)\\
   &=U_{(2,3)(4,5)(3,4)}(I \otimes \Phi_m \otimes I)(I\otimes R^*_{l_1-k-m',l_1-k}\otimes I \otimes R^*_{m',l_2-k} \otimes I)
\end{align*}
where $U_{(2,3)(4,5)(3,4)}=U_{(2,3)(4,5)}U_{(3,4)}$ and $(I\otimes R^*_{l_1-k-m',l_1-k}\otimes I \otimes R^*_{m',l_2-k} \otimes I)$ acts on 
\[ H_2^{\otimes n-l_1}\otimes    H_1^{\otimes l_1-k}\otimes H_2^{\otimes n-k-m}\otimes H_{2}^{\otimes m}\otimes H_{2}^{\otimes m} \otimes H_2^{\otimes n-k-m}\otimes H_1^{\otimes l_2-k} \otimes H_2^{\otimes n-l_2} \]
where $R^*_{l_1-k-m',l_1-k}$ acts on the second tensor component and $  R^*_{m',l_2-k}$ acts on the seventh tensor component and the identity operator $I$ acts on other tensor components.
By using this identity, $(I\otimes \Phi_{m'}\otimes I)(I\otimes R^*_{l_1-k-m',l_1-k}\otimes R^*_{m',l_2-k} \otimes I)(\Xi'_{m})$ can be written in the following form
\[(I\otimes \Phi_{m'}\otimes I)U_{(2,3)(4,5)(3,4)}(I\otimes \Phi_m \otimes I)(\xi \otimes \eta) \]
with
\begin{align*}
    \xi&=(I\otimes R^*_{l_1-k-m',l_1-k}\otimes R^*_{n-k-m,n-k})(\Xi^*_{k,l_1}) \\
    \eta&=(R^*_{m,n-k} \otimes R^*_{m',l_2-k} \otimes I)(\Xi_{k,l_2}).
\end{align*}
Note that 
\begin{align*}
    \xi &\in H_2^{\otimes n-l_1} \otimes H_1^{\otimes l_1-k-m'} \otimes H_1^{\otimes m'} \otimes H_2^{\otimes n-k-m} \otimes H_2^{\otimes m}\\
    \eta &\in H_2^{\otimes m} \otimes H_2^{\otimes n-k-m} \otimes H_1^{\otimes m'} \otimes H_1^{\otimes l_2-k-m'} \otimes H_2^{\otimes n-l_2}.
\end{align*}
For this type of vector, we have the following estimates 
   \begin{lm}\label{doublePhi}
   For $\xi \in H^{\otimes n-l_1} \otimes H^{\otimes l_1-k-m'} \otimes H^{\otimes m'} \otimes H^{\otimes n-k-m} \otimes H^{\otimes m}$ and $\eta \in H^{\otimes m} \otimes H^{\otimes n-k-m} \otimes H^{\otimes m'} \otimes H^{\otimes l_2-k-m'} \otimes H^{\otimes n-l_2}$, we have
   \begin{align*}
      & \|(I\otimes \Phi_{m'}\otimes I)U_{(2,3)(4,5)(3,4)}(I\otimes \Phi_m \otimes I)(\xi \otimes \eta)\|_{\mathcal{F}_q(H)^{\otimes 6}}\\
       &\le \|\xi\|_{\mathcal{F}_q(H)^{\otimes 5}} \|\eta\|_{\mathcal{F}_q(H)^{\otimes 5}}
       \end{align*}
       \end{lm}
We delay the proof of Lemma \ref{doublePhi} to the end of this proof, on page \pageref{lemma3.5pf}.

       Thus, using Lemma \ref{doublePhi} we have
       \[\|\Xi''_{m,m'} \|_{\mathcal{F}_q(H)^{\otimes 6}} \le \| \xi \|_{\mathcal{F}_q(H)^{\otimes 5}}\|\eta\|_{\mathcal{F}_q(H)^{\otimes 5}}. \]
       By using Lemma \ref{Rstar} and the fact $\ast$ is anti-unitary, we have
       \begin{align*}
         \| \xi \|_{\mathcal{F}_q(H)^{\otimes 5}}&\le C_{|q|}^{\frac{1}{2}} \|(I\otimes R^*_{n-k-m,n-k})(\Xi^*_{k,l_1})\|_{\mathcal{F}_q(H)^{\otimes 4}} \le C_{|q|}\|\Xi_{k,l_1}\|_{\mathcal{F}_q(H)^{\otimes 3}} \\
       \|\eta\|_{\mathcal{F}_q(H)^{\otimes 5}}&\le C_{|q|}^{\frac{1}{2}}\|(R^*_{m,n-k} \otimes I)(\Xi_{k,l_2})\|_{\mathcal{F}_q(H)^{\otimes 4}}\le C_{|q|}\|\Xi_{k,l_2}\|_{\mathcal{F}_q(H)^{\otimes 3}}
       \end{align*}
      By using Lemma \ref{tensorineq} and \ref{Rstar}, we have for $i=1,2$
       \begin{align}\label{Xi}
\|\Xi_{k,l_i}\|_{\mathcal{F}_q(H)^{\otimes 3}}&= \|(I_{n-k}\otimes \Phi_k \otimes I_{n-k})(I_{n}\otimes R^*_{k,l_i}\otimes I_{n-l_i})(\xi_k^*\otimes \xi_{l_i}) \|_{\mathcal{F}_q(H)^{\otimes 3}} \\
&\le \|\xi_k\|_{\mathcal{F}_q(H)^{\otimes 2}}\|(R^*_{k,l_i}\otimes I_{n-l_i})(\xi_{l_i}) \|_{\mathcal{F}_q(H)^{\otimes 3}} \\
&\le C_{|q|}^{\frac{1}{2}}\|\xi_k\|_{\mathcal{F}_q(H)^{\otimes 2}}
\|\xi_{l_i}\|_{\mathcal{F}_q(H)^{\otimes 2}}.
\end{align}
By combining all these estimates, we obtain
\begin{align*}
     &\sum_{k=1}^{n-1}\sum_{m=0}^{n-k-1}\|B_{k,m}\|\\
     &\le C_{|q|}^{5}\max_{1\le k\le n} \|\xi_k\|^4_{\mathcal{F}_q(H)^{\otimes 2}} \\
     &\qquad \cdot \sum_{k=1}^{n-1}\sum_{m=0}^{n-k-1}\sum_{l_1,l_2=k+1}^n\sum_{m'=0}^{\min(l_1-k,l_2-k)} |q|^{(n-k-m)^2+\sum_{i=1}^2(l_i-k)(n-k-m)+\prod_{j=1}^2(l_j-k-m')}\\
     &\le D_2 n \max_{1\le k\le n} \|\xi_k\|^4_{\mathcal{F}_q(H)^{\otimes 2}}
\end{align*}
where $D_2 = C_{|q|}^5 \left(\sum_{k=0}^{\infty} |q|^{k^2} \right)^2\left(\sum_{k=0}^{\infty} |q|^{k} \right)^2$.
For the second inequality in this Lemma, we can use the same argument (the difference is that we take $k_1,k_2$ instead of a single $k$) to get the factor $\max_{1\le k\le n} \|\xi_k\|^4_{\mathcal{F}_q(H)^{\otimes 2}} $, and the remaining part can be written as follows. 
\begin{align*}
    & C_{|q|}^{5} \sum_{k_1=1}^{n-2}\sum_{k_2=k_1+1}^{n-1}\sum_{\substack{l_1=k_1+1\\ l_2=k_2+1}}^n \sum_{m=0}^{n-k_2}\sum_{m'=0}^{\min(l_1-k_1,l_2-k_2)} \\
      &\quad |q|^{(l_1-k_1)(n-k_2-m)+(l_2-k_2)(n-k_1-m)+\prod_{i=1}^2(n-k_i-m)+\prod_{i=1}^2(l_i-k_i-m')},
\end{align*}
which is bounded by $D_2 n^2$.  This concludes the proof of Lemma \ref{step3}.
\end{proof}

\begin{proof}[Proof of Lemma \ref{doublePhi}]\label{lemma3.5pf}

    For $\xi=\sum x_1\otimes x_2 \otimes x_3 \otimes x_4 \otimes x_5$ and $\eta=\sum y_5\otimes y_4 \otimes y_3 \otimes y_2 \otimes y_1$, we have
    \begin{align*}
      &(I\otimes \Phi_{m'}\otimes I)U_{(2,3)(4,5)(3,4)}(I\otimes \Phi_m \otimes I)(\xi \otimes \eta)\\
      &=\sum \Phi_m(x_5\otimes y_5)\Phi_{m'}(x_3\otimes y_3)x_1 \otimes y_4 \otimes x_2 \otimes y_2 \otimes x_4 \otimes y_1  \\
      &=\sum \langle x_5, y_5^*\rangle_q\langle x_3, y_3^*\rangle_qx_1 \otimes y_4 \otimes x_2 \otimes y_2 \otimes x_4 \otimes y_1\\
      &=\sum\langle x_3\otimes x_5, (y_5\otimes y_3)^*\rangle_{\mathcal{F}_q(H)^{\otimes 2}} x_1 \otimes y_4 \otimes x_2 \otimes y_2 \otimes x_4 \otimes y_1. 
    \end{align*}
    By taking the norm of this sum and permuting the tensor components, we get
 \begin{align*}
 &\left\|\sum\langle x_3\otimes x_5, (y_5\otimes y_3)^*\rangle_{\mathcal{F}_q(H)^{\otimes 2}} x_1 \otimes y_4 \otimes x_2 \otimes y_2 \otimes x_4 \otimes y_1 \right\|_{\mathcal{F}_q(H)^{\otimes 6}}\\
 &= \left\|\sum\langle x_3\otimes x_5, (y_5\otimes y_3)^*\rangle_{\mathcal{F}_q(H)^{\otimes 2}} x_1 \otimes   x_2 \otimes x_4 \otimes y_4 \otimes  y_2 \otimes y_1\right\|_{\mathcal{F}_q(H)^{\otimes 6}}\\
 &= \left\| (I\otimes \Phi \otimes I)\left(\sum x_1\otimes x_2 \otimes x_4 \otimes x_3 \otimes x_5\right)\otimes \left(\sum y_5 \otimes y_3 \otimes y_4 \otimes y_2 \otimes y_1\right)\right\|_{\mathcal{F}_q(H)^{\otimes 6}}
 \end{align*}
 where $\Phi: (H^{m'}\otimes H^{m})\otimes (H^{m'}\otimes H^{m}) \to \mathbb{C}$ is defined by $\Phi[(\xi_1 \otimes \xi_2)\otimes (\eta_2 \otimes \eta_1)] =\langle \xi_1 \otimes \xi_2, (\eta_2 \otimes \eta_1)^*\rangle_{\mathcal{F}_q(H)^{\otimes 2}}$. Since $\ast$ is anti-unitary, we apply Lemma \ref{tensorineq} and we get
 \begin{align*}
   &\left\| (I\otimes \Phi \otimes I)\left(\sum x_1\otimes x_2 \otimes x_4 \otimes x_3 \otimes x_5\right)\otimes \left(\sum y_5 \otimes y_3 \otimes y_4 \otimes y_2 \otimes y_1\right)\right\|_{\mathcal{F}_q(H)^{\otimes 6}}\\
   &\le \left\| \sum x_1\otimes x_2 \otimes x_4 \otimes x_3 \otimes x_5 \right\|_{\mathcal{F}_q(H)^{\otimes 5}}
   \left\| \sum  y_5 \otimes y_3 \otimes y_4 \otimes y_2 \otimes y_1\right\|_{\mathcal{F}_q(H)^{\otimes 5}}\\
   &=\|\xi\|_{\mathcal{F}_q(H)^{\otimes 5}} \|\eta \|_{\mathcal{F}_q(H)^{\otimes 5}}
 \end{align*}
\end{proof}
To estimate the last term $\sum_{k=1}^{n-1}\|B_{k,n-k}\|$, we use a strong induction argument. 
\begin{lm}\label{step4}
    If we assume Lemma \ref{KeyLem} holds for all $1\le m < n$, then we have
    \[\sum_{k=1}^{n-1}\|B_{k,n-k}\| \le \frac{A^2C_{|q|}}{2} n^2 \max_{1\le k \le n}\left\|\xi_k \right\|_{\mathcal{F}_q(H)^{\otimes 2}}^4.\]
\end{lm}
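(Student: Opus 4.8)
\section*{Proof proposal for Lemma \ref{step4}}

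The plan is to unwind the definition of $B_{k,n-k}$, recognise it as a positive ``diagonal'' quadratic operator built from pieces that are themselves of the shape appearing in Lemma \ref{KeyLem} at the smaller level $n-k$, and then apply the inductive hypothesis. First I would simplify $B_{k,n-k}$: it is the $m=n-k$ summand produced by applying Corollary \ref{creationannihilation2} to the central product $a^*_{n-k}\otimes a_{n-k}$ inside $B_k^*B_k$. When $m=n-k$ the scalar $q^{(n-k-m)^2}$ equals $1$, the operators $a_{n-k-m}$ and $a^*_{n-k-m}$ become identities, the maps $R^*_{0,n-k}$ and $R^*_{n-k,n-k}$ are trivial, and the flip $U_{(3,4)}$ disappears, so
\[ B_{k,n-k}=\sum_{l_1,l_2=k+1}^{n} M\bigl(a_{n-l_1}\otimes a^*_{l_1-k}\otimes a_{l_2-k}\otimes a^*_{n-l_2}\bigr)\bigl[(I\otimes\Phi_{n-k}\otimes I)(\Xi^*_{k,l_1}\otimes\Xi_{k,l_2})\bigr]. \]

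Next I would exploit the structure of $\Xi_{k,l}$. Fix an orthonormal basis $\{f_s\}$ of $H_2^{\otimes n-k}$ for the $q$-inner product and write $\xi_k=\sum_s \xi_k^{(s)}\otimes f_s$ with $\xi_k^{(s)}\in H_1^{\otimes k}$; since in the formula $\Xi_{k,l}=(I_{n-k}\otimes\Phi_k\otimes I_{n-k})(I_n\otimes R^*_{k,l}\otimes I_{n-l})(\xi_k^*\otimes\xi_l)$ the leading $H_2^{\otimes n-k}$-leg is inherited verbatim from $\xi_k^*$, one gets $\Xi_{k,l}=\sum_s f_s^*\otimes\eta_l^{(s)}$ with $\eta_l^{(s)}=(\Phi_k\otimes I_{n-k})\bigl[(\xi_k^{(s)})^*\otimes(R^*_{k,l}\otimes I_{n-l})\xi_l\bigr]\in H_1^{\otimes l-k}\otimes H_2^{\otimes n-l}$. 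Because $\ast$ is anti-unitary, $\Phi_{n-k}(f_s\otimes f_t^*)=\langle f_s,f_t\rangle_q=\delta_{s,t}$, so the double sum collapses and $(I\otimes\Phi_{n-k}\otimes I)(\Xi^*_{k,l_1}\otimes\Xi_{k,l_2})=\sum_s(\eta_{l_1}^{(s)})^*\otimes\eta_{l_2}^{(s)}$. Using $[M(a_{l-k}\otimes a^*_{n-l})\psi]^*=M(a_{n-l}\otimes a^*_{l-k})\psi^*$ and reindexing by $j=l-k$, this gives
\[ B_{k,n-k}=\sum_s [\Theta^{(s)}]^*\Theta^{(s)},\qquad \Theta^{(s)}:=\sum_{j=1}^{n-k} M\bigl(a_j\otimes a^*_{(n-k)-j}\bigr)\eta_{k+j}^{(s)}, \]
and the key point is that each $\Theta^{(s)}$ is precisely of the form treated by Lemma \ref{KeyLem} at level $n-k<n$.

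Finally I would invoke the inductive hypothesis to get $\|\Theta^{(s)}\|\le A\sqrt{n-k}\,\max_{1\le j\le n-k}\|\eta_{k+j}^{(s)}\|_{\mathcal F_q(H)^{\otimes 2}}$. What keeps the bound at order $n^2$ (rather than $n^3$) when summing over $s$ is that $\|\eta_{k+j}^{(s)}\|$ \emph{factors}: by Lemma \ref{tensorineq} (with the anti-unitary $\ast$) and Lemma \ref{Rstar}, $\|\eta_{k+j}^{(s)}\|_{\mathcal F_q(H)^{\otimes 2}}\le C_{|q|}^{1/2}\|\xi_k^{(s)}\|_{\mathcal F_q(H)}\|\xi_{k+j}\|_{\mathcal F_q(H)^{\otimes 2}}$, so the $j$-independent factor $\|\xi_k^{(s)}\|$ leaves the maximum, giving $\max_j\|\eta_{k+j}^{(s)}\|^2\le C_{|q|}\|\xi_k^{(s)}\|^2\max_{1\le l\le n}\|\xi_l\|_{\mathcal F_q(H)^{\otimes 2}}^2$. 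Combining $\|B_{k,n-k}\|\le\sum_s\|\Theta^{(s)}\|^2$ with the Parseval identity $\sum_s\|\xi_k^{(s)}\|_{\mathcal F_q(H)}^2=\|\xi_k\|_{\mathcal F_q(H)^{\otimes 2}}^2$ yields $\|B_{k,n-k}\|\le A^2 C_{|q|}(n-k)\max_{1\le l\le n}\|\xi_l\|_{\mathcal F_q(H)^{\otimes 2}}^4$, and $\sum_{k=1}^{n-1}(n-k)=\tfrac{n(n-1)}{2}\le\tfrac{n^2}{2}$ gives the claim.

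The main obstacle is the bookkeeping in the first two steps: carefully tracking which tensor legs $\Phi_{n-k}$, $R^*$, and the flips act on through Corollary \ref{creationannihilation2} in order to see that $B_{k,n-k}$ genuinely is the positive operator $\sum_s[\Theta^{(s)}]^*\Theta^{(s)}$ with each $\Theta^{(s)}$ in Lemma \ref{KeyLem}-form. Once that identification is in hand, the estimate is routine, with the factorization of $\|\eta_{k+j}^{(s)}\|$ being exactly the device that prevents the summation over $s$ from costing an extra factor of $n$.
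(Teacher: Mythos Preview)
Your argument is correct and follows essentially the same route as the paper: both decompose via an orthonormal basis $\{f_s\}$ of $H_2^{\otimes n-k}$ to write $B_{k,n-k}=\sum_s [\Theta^{(s)}]^*\Theta^{(s)}$, apply the inductive hypothesis (Lemma~\ref{KeyLem} at level $n-k$) to each $\Theta^{(s)}$, and finish with $\sum_{k=1}^{n-1}(n-k)\le n^2/2$. Your explicit factorization $\|\eta_{k+j}^{(s)}\|\le C_{|q|}^{1/2}\|\xi_k^{(s)}\|\,\|\xi_{k+j}\|$ (separating the $s$- and $j$-dependence before summing) is in fact a bit cleaner than the paper's use of the Parseval identity $\sum_s\|(\phi_s\otimes I)\Xi_{k,l}\|^2=\|\Xi_{k,l}\|^2$ (Lemma~\ref{estimatephi}), since it makes transparent why the $\max_j$ inside $\sum_s$ causes no loss.
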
 
\begin{proof}
 Recall that $B_{k,n-k}$ is equal to 
   \begin{align*}
    \sum_{l_1,l_2=k+1}^n M(a_{n-l_1}\otimes a^*_{l_1-k} \otimes a_{l_2-k}\otimes a^*_{n-l_2})(I\otimes \Phi_{n-k}\otimes I)(\Xi_{k,l_1}^*\otimes \Xi_{k,l_2})
   \end{align*}
   where $I\otimes \Phi_{n-k}\otimes I$ acts on $H_2^{\otimes n-l_1}\otimes H_1^{\otimes l_1-k}\otimes H_2^{\otimes n-k}\otimes H_2^{\otimes n-k}\otimes H_1^{\otimes l_2-k}\otimes H_2^{\otimes n-l_2}$ where $\Phi_{n-k}$ acts on the third and fourth tensor components and the identity operator $I$ acts on the other tensor components. 
   By using the orthonormal basis $\{f_s\}_{s \in S}$ of $H_2^{\otimes n-k}$ with respect to the $q$-inner product, we decompose $\Phi_{n-k}$ as follows:
   \[ \Phi_{n-k}(\xi \otimes \eta)= \sum_{s \in S} \langle \xi , f_s\rangle_q \langle f_s , \eta^* \rangle_q=\psi_s(\xi)\phi_s(\eta) \]
where $\psi_s(\xi)= \langle \xi,f_s\rangle_q$ and $\phi_s(x)=\langle f_s , \eta^* \rangle_q$. Note that $\psi_s$ and $\phi_s$ are linear functionals on $H^{\otimes n-k}$ and
\[\overline{\phi_s(\xi^*)}=\overline{\langle f_s , \xi \rangle_q}=\langle \xi , f_s\rangle_q=\psi_s(\xi).\]
Thus we can also see,
\begin{align*}
(I \otimes\psi_s)(\xi\otimes \eta)^*&=\psi_s(\xi^*) \eta^*\\
&=(\overline{\psi_s(\xi^*)}\eta)^*\\
&=(\phi_s(\xi)\eta)^*\\
&=[(\phi_s\otimes I)(\xi\otimes \eta)]^*
\end{align*}
   By using this decomposition,
   $B_{k,n-k}$ is equal to
   \begin{align*}
      & \sum_{s\in S}\sum_{l_1,l_2=k+1}^n M(a_{n-l_1}\otimes a^*_{l_1-k} \otimes a_{l_2-k}\otimes a^*_{n-l_2})\left[(I\otimes \psi_s)(\Xi_{k,l_1}^*)\right]\otimes\left[ (\phi_s\otimes I) (\Xi_{k,l_2})\right]
      \\
       &=\sum_{s\in S}\sum_{l_1,l_2=k+1}^n M(a_{n-l_1}\otimes a^*_{l_1-k} \otimes a_{l_2-k}\otimes a^*_{n-l_2})\left[(\phi_s \otimes I) \Xi_{k,l_1}\right]^* \otimes (\phi_s \otimes I)\Xi_{k,l_2}\\
       &=\sum_{s\in S}\left[ \sum_{l=k+1}^n M(a_{l-k}\otimes a^*_{n-l})(\phi_s  \otimes I)\Xi_{k,l}\right]^*\sum_{l=k+1}^n M(a_{l-k}\otimes a^*_{n-l})(\phi_s \otimes I)\Xi_{k,l}.
       \end{align*}

Therefore, we have
       
       \[
           \sum_{k=1}^{n-1}\|B_{k,n-k}\| \le \sum_{k=1}^{n-1}\sum_{s \in S}\left\| \sum_{l=k+1}^n M(a_{l-k}\otimes a^*_{n-l})(\phi_s\otimes I)\Xi_{k,l}\right\|^2.
       \]
       where $(\phi_s \otimes I)\Xi_{k,l}\in H_1^{\otimes l-k}\otimes H_2^{\otimes n-l}  $. Thus we can use the assumption of induction, and we get
       \begin{align*}
           \left\| \sum_{l=k+1}^n M(a_{l-k}\otimes a^*_{n-l})(\phi_s \otimes I)\Xi_{k,l}\right\|^2\le A^2(n-k)\max_{1\le l \le n-k}\left\|(\phi_s \otimes I)\Xi_{k,l+k}\right\|^2_{\mathcal{F}_q(H)^{\otimes 2}}
       \end{align*}
       Now, we use the following lemma.
\begin{lm}\label{estimatephi}
    Under the setting above, we have
    \[
           \sum_{s \in S}\left\|(\phi_s \otimes I)\Xi_{k,l+k}\right\|^2_{\mathcal{F}_q(H)^{\otimes 2}} = \left\|\Xi_{k,l+k}\right\|^2_{\mathcal{F}_q(H)^{\otimes 3}}
       \]
\end{lm}
       
By using this lemma and the inequalities in (\ref{Xi}) in the proof of Lemma \ref{step3}, we have
    \[
      \sum_{s \in S}\left\|(\phi_s \otimes I)\Xi_{k,l+k}\right\|^2_{\mathcal{F}_q(H)^{\otimes 2}} = \left\|\Xi_{k,l+k}\right\|^2_{\mathcal{F}_q(H)^{\otimes 3}}\le C_{|q|}\|\xi_{k}\|^2_{\mathcal{F}_q(H)^{\otimes 2}}  \|\xi_{l+k}\|^2_{\mathcal{F}_q(H)^{\otimes 2}}  
    \]
           Thus, we obtain
           \begin{align*}
           \sum_{k=1}^{n-1}\|B_{k,n-k}\| &\le A^2 C_{|q|}\max_{1\le k \le n}\|\xi_k\|^4_{\mathcal{F}_q(H)^{\otimes 2}} \sum_{k=1}^{n-1}(n-k)\\
           &\le \frac{A^2 C_{|q|}}{2} n^2 \max_{1\le k \le n}\|\xi_k\|^4_{\mathcal{F}_q(H)^{\otimes 2}}
           \end{align*}
\end{proof}
\begin{proof}[Proof of Lemma \ref{estimatephi}]
    Recall that $\Xi_{k,l+k} \in H_2^{\otimes n-k} \otimes H_1^{\otimes l}\otimes H_2^{\otimes n-l-k}$ and $\{f_s\}_{s \in S}$ is an orthonormal basis of $H_2^{\otimes n-k}$. When we write $\Xi_{k,l+k}= \sum_i x_i \otimes y_i \otimes z_i$, we have
    
    \begin{align*}
        \sum_{s \in S}\left\|(\phi_s \otimes I)\Xi_{k,l+k} \right\|^2_{\mathcal{F}_q(H)^{\otimes 2}} &= \sum_{s \in S}\sum_{i,i'}\langle f_s, x_i^* \rangle_q \langle x_{i'}^*, f_s \rangle_q \langle y_i,y_{i'}\rangle_q\langle z_i,z_{i'}\rangle_q \\
        &=\sum_{i,i'}\langle x_{i'}^*, x_i^* \rangle_q \langle y_i,y_{i'}\rangle_q \langle z_i,z_{i'}\rangle_q\\
        &=\sum_{i,i'}\langle x_{i}, x_{i'} \rangle_q \langle y_i,y_{i'}\rangle_q\langle z_i,z_{i'}\rangle_q=\left\| \Xi_{k,l+k}\right\|^2_{\mathcal{F}_q(H)^{\otimes 3}},
    \end{align*}
    where we use the fact that $\ast$ is anti-unitary.
\end{proof}
\begin{proof}[Proof of Lemma \ref{KeyLem}]
    We prove this Lemma by induction. When $n=1$, we have the inequality by Corollary \ref{norm_of_prodcuts}. If Lemma \ref{KeyLem} holds for $1 \le k \le n-1$, then by Lemma \ref{step1}, \ref{step2}, we have
    \begin{align*}
        & \left\| \sum_{k=1}^nM (a_k\otimes a^*_{n-k})\xi_k\right\|^2\\
    &\le  C_{|q|}^2 n \max_{1\le k \le n} \left\| \xi_k \right\|_{\mathcal{F}_q(H)^{\otimes 2}}^2  + 2\left\|\sum_{k=1}^{n-1}\sum_{l=k+1}^n A_k^* A_l \right\|\\
    &\le (C_{|q|}^2+2D_1) n \max_{1\le k \le n} \left\| \xi_k \right\|_{\mathcal{F}_q(H)^{\otimes 2}}^2 + 2\left\|\sum_{k=1}^{n-1}\sum_{l=k+1}^nA_{k,l,k}\right\|. 
    \end{align*}
    By using Lemma \ref{step3}, \ref{step4}, $\left\|\sum_{k=1}^{n-1}\sum_{l=k+1}^nA_{k,l,k}\right\|$ is bounded by
    \begin{align*}
    & \sqrt{(2D_2 n^2+D_2 n) \max_{1 \le k \le n}\left\| \xi_k \right\|_{\mathcal{F}_q(H)^{\otimes 2}}^4 + \sum_{k=1}^{n-1}\|B_{k,n-k}\|}\\
    &\le \sqrt{3D_2+\frac{A^2C_{|q|}}{2}}
   \ n \max_{1 \le k \le n} \left\| \xi_k\right\|_{\mathcal{F}_q(H)^{\otimes 2}}^2.
    \end{align*}
    By combining them, $\left\| \sum_{k=1}^nM (a_k\otimes a^*_{n-k})\xi_k\right\|^2$ is bounded by 
    \[\left(C_{|q|}^2+2D_1+2\sqrt{3D_2+\frac{A^2C_{|q|}}{2}} \right) n \max_{1 \le k \le n} \left\| \xi_k\right\|_{\mathcal{F}_q(H)^{\otimes 2}}^2 \]
    Therefore, we have the desired inequality if we take $A$ large enough to have 
    \[ C_{|q|}^2+2D_1+2\sqrt{3D_2+\frac{A^2C_{|q|}}{2}} \le A^2. \]
\end{proof}

\begin{proof}[Proof of Theorem \ref{Mainresult}]
   By Lemma \ref{qCircular}, we have
   \[ \sum_{|w|=n} \alpha_w c_w^{(q)}
  =\sum_{k=0}^n M(a_k\otimes a^*_{n-k}) (I_k\otimes \overline{I}_{n-k})R^*_{k,n} \xi\]
  where $\xi = \sum_{|w|=n} \alpha_w e_w \in H_1^{\otimes n}$.
  By using the triangle inequality, 
  \begin{align*}&\left\|\sum_{k=0}^n M(a_k\otimes a^*_{n-k}) (I_k\otimes \overline{I}_{n-k})R^*_{k,n} \xi\right\|\\
  &\le \|a^*_n(\overline{I}_n (\xi))\|+\left\|\sum_{k=1}^n M(a_k\otimes a^*_{n-k}) (I_k\otimes \overline{I}_{n-k})R^*_{k,n} \xi\right\|. 
  \end{align*}
  Applying Lemma \ref{KeyLem} to the second term, we get
  \[\left\|\sum_{k=1}^n M(a_k\otimes a^*_{n-k}) (I_k\otimes \overline{I}_{n-k})R^*_{k,n} \xi\right\|\le A\sqrt{n} \max_{1\le k \le n}\|(I_k\otimes \overline{I}_{n-k})R^*_{k,n} \xi\|_{\mathcal{F}_q(H)^{\otimes 2}}. \]
  Since $I_k\otimes \overline{I}_{n-k}$ is unitary, by Lemma \ref{Rstar}, we have
  \[\|(I_k\otimes \overline{I}_{n-k})R^*_{k,n} \xi\|_{\mathcal{F}_q(H)^{\otimes 2}}\le C_{|q|}^{\frac{1}{2}}\|\xi\|_{\mathcal{F}_q(H)}, \]
  and by Lemma \ref{norm_of_creation},
  \[ \|a^*_n(\overline{I}_n (\xi))\|\le C_{|q|}^{\frac{1}{2}}\|\xi\|_{\mathcal{F}_q(H)}.\]
  Thus, we conclude
  \begin{align*}
   \left\|\sum_{|w|=n} \alpha_w c_w^{(q)}\right\|&\le C_{|q|}^{\frac{1}{2}}(A\sqrt{n}+1) \|\xi\|_{\mathcal{F}_q(H)} \\
   &\le A'\sqrt{n+1} \left\|\sum_{|w|=n} \alpha_w c_w^{(q)}\right\|_2
  \end{align*}
   for some $A'=A'(|q|)$ (for example $A'=A \sqrt{2C_{|q|}}$).
 \end{proof}

\section{An Application: Strong Ultracontractivity\label{sect.ultra}}

As an application of the strong Haagerup inequality that we obtained, we now prove the strong ultracontractivity of the $q$-Ornstein-Uhlenbeck semigroup, based on the argument in \cite[Section 5]{MR2353703}.  The reader may wish to review the $q$-Fock space construction at the beginning of Section \ref{preliminary}.

Given a Hilbert space $H$ and a vector $\xi\in H$, the associated ``field operator'' $X^{(q)}(\xi)$ is defined as
\[ X^{(q)}(\xi) := a(\xi)+a^\ast(\xi) \quad \text{acting on} \quad \mathcal{F}_q(H). \]
These operators are all selfadjoint and bounded for $-1\le q<1$.  If $\{e_i\}$ is an orthonormal basis for $H$, then the operators $X^{(q)}_i = X^{(q)}(e_i)$ generate a von Neumann algebra known as the {\bf $q$-Gaussian algebra} $\Gamma_q(H)$.  (Any two orthonormal bases for $H$ generate the same algebra, and the isometry of $H$ between the two bases induces a $\ast$-automorphism of $\Gamma_q(H)$.)  The vacuum expectation state $\tau(X) = \langle X\Omega,\Omega \rangle_{\mathcal{F}_q}$ is a faithful, normal trace on $\Gamma_q(H)$, which is a $\mathrm{II}_1$-factor.  In terms of the state $\tau$, the law of each selfadjoint operator $X^{(q)}(\xi)$ with $\|\xi\|=1$ is the $q$-Gaussian (aka $q$-semicircular) distribution.

The faithfulness of $\tau$ on $\Gamma_q(H)$ shows that the map
\[ \Gamma_q(H)\to\mathcal{F}_q(H), \qquad X\mapsto X\Omega \]
is injective, and is (by definition of the inner product) an isometry with respect to the inner product $\langle X,Y\rangle_\tau = \tau(Y^\ast X)$.  Therefore, this map extends to an isometric isomorphism from $L^2(\Gamma_q(H),\tau)$ onto a closed subspace of $\mathcal{F}_q(H)$.  In fact, $\Gamma_q(H)\Omega$ is dense in $\mathcal{F}_q(H)$ (cf.\ \cite[Section 2]{MR1463036}) and so  $L^2(\Gamma_q(H),\tau)\cong\mathcal{F}_q(H)$.

The {\em number operator} $N$ is defined on the algebraic Fock space $\mathcal{F}_{\mathrm{alg}}(H)$ by $N\Omega=0$ and $N(\xi_1\otimes\cdots\otimes \xi_n) = n\,\xi_1\otimes\cdots\otimes\xi_n$ for $n\in\mathbb{N}$; it extends to a densely-defined self-adjoint operator on $\mathcal{F}_q(H)$ for $|q|\le 1$.  Intertwining with the $L^2$-isomorphism mentioned above, it induces a densely-defined selfadjoint operator $N_q$ on $L^2(\Gamma_q(H),\tau)$, whose spectrtum is $\mathbb{N}$ and is therefore a non-negative operator.  Thus $-N_q$ generates a contraction semigroup which (first appearing in \cite{MR1146011}) is known as the {\bf $q$-Ornstein--Uhlenbeck semigroup}, or {\bf $q$-OU semigroup} for short.

The $q=1$ version was thoroughly studied in the 1960s and 1970s, where it played an important role in constructive quantum field theory.  There, the space $L^2(\Gamma_1(H),\tau)$ is the space of $L^2$ functions with respect to a Gaussian cylinder measure $\gamma$ on $H$, and the number operator is the associated divergence form operator, i.e.\ satisfying $\langle N_1 \psi,\psi\rangle = -\int_H |\nabla \psi|^2\,d\gamma$.  If $H=\mathbb{C}^d$ then $N_1 = \Delta - x\cdot\nabla$ is the Ornstein--Uhlenbeck operator, ergo the name for general $q$.  The eigenfucntions of $N_1$ are tensor products of Hermite polynomials; there is a similar analysis of $N_q$'s eigenstates in terms of $q$-Hermite polynomials, see \cite[Section 2] {MR1463036}.

The OU semigroup $e^{-tN_1}$ has many smoothing properties that are useful tools in the analysis of physics-motivated PDE problems whose linearization involves the operator $N_1$.  These smoothing properties can be dually measured in terms of $L^p$ estimates for the semigroup.  As a Markov semigroup, it is of course a contraction on $L^p$ for $1\le p<\infty$.  More significantly, for $p\ge 1$, $e^{-tN_1}$ maps $L^p$ into $L^\infty$ for each $t>0$, and $e^{-tN_1}\colon L^p\to L^\infty$ is bounded.  This property is known as {\bf ultracontractivity}, cf.\ \cite{MR849557}.  It follows that $e^{-tN_1}$ maps $L^p$ into $L^r\supset L^\infty$ for any $r\ge p$, but that does not mean that it is bounded $L^p\to L^r$ (in general it is not).  Nevertheless, a further smoothing property the semigroup holds is {\bf hypercontractivity}: there is a finite time $t_N(p,r)$ such that $e^{-tN_1}\colon L^p\to L^r$ is bounded, in fact is a contraction, iff $t\ge t_N(p,r)$.  (The time to contraction is explicitly known to be $t_N(p,r) = \frac12\ln(\frac{r-1}{p-1})$, known as the {\em Nelson time}, cf.\ \cite{MR343816}.)

In \cite{MR1462754}, entitled {\em Free hypercontractivity}, Biane proved the same hypercontractivity estimates hold for the $q$-OU semigroup for $-1\le q<1$.  (His focus in the paper was the free probability $q=0$ case, but his proof and statement are general.)  That is: Biane proved that for any Hilbert space $H$, and any $1<p<r<\infty$, the $q$-OU semigroup $e^{-tN_q}\colon L^p(\Gamma_q(H),\tau)\to L^r(\Gamma_q(H),\tau)$ is a contraction iff $t\ge t_N(p,r)$ -- the same time to contraction independent of $q$.  Then, in \cite{MR1811255}, Bo\.zejko showed that the $q$-OU semigroup is ultracontractive: precisely, he proved that for any $h\in L^2(\Gamma_q(H),\tau)$ and any $t>0$, $e^{-tN_q}h$ is actually the in the von Neumann algebra $\Gamma_q(H)$ (``$=$'' $L^\infty$), and moreover
\begin{equation} \label{eq:Bo.uc} \|e^{-tN_q}\colon L^2(\Gamma_q(H),\tau)\to\Gamma_q(H)\| \le C\, t^{-3/2} \quad \text{for }t>0 \end{equation}
for some constant $C = C(q)<\infty$.  The self-adjointness of $e^{-tN_q}$ on $L^2$ then implies the same bound holds for the action $L^1\to L^2$, and then using the semigroup property yields
\[ \|e^{-tN_q}\|_{L^1\to L^\infty} \le \|e^{-\frac{t}{2}N_q}\|_{L^1\to L^2} \|e^{-\frac{t}{2}N_q}\|_{L^2\to L^\infty} \le \tilde{C} t^{-3} \quad \text{for }t>0 \]
where $\tilde{C} = 8C^2$.  It then follows by interpolation that the $q$-OU semigroup is bounded $L^p(\Gamma_q(H),\tau)\to \Gamma_q(H)$ for all $1\le p\le \infty$.  In particular, ultracontractivity is typically reduced to the question of boundedness $L^2\to L^\infty$, as in Bo\.zejko's estimate \eqref{eq:Bo.uc}.

Again in the classical $q=1$ setting, Janson \cite{MR706641} studied Nelson's hypercontractivity theorem with the domain restricted to {\em holomorphic} functions $L^p_{\mathrm{hol}}(\mathbb{C}^d,\gamma)$ in $L^p$ of the Gaussian cylinder measure.  The action of the OU-semigroup on this space is extremely simple: $e^{-tN_1}f(z) = f(e^{-t}z)$, i.e.\ holomorphic momnomials $z_1^{n_1}\ldots z_d^{n_d}$ are eigenfunctions for $N_1$ with eigenvalues $n_1+\cdots+n_d$.  Owing in-part to this dilation action, the hypercontractive estimates actually improve, in that the time to contraction shrinks: $\|e^{-tN_1}\colon L^p_{\mathrm{hol}}\to L^r\|\le 1$ iff $t\ge t_J(p,r) = \frac12\ln\frac{r}{p} < t_N(p,r)$.  This {\em strong hypercontractivity} theorem motivated the first author of the present paper to explore a holomorphic space version of Biane's $q$-Gaussian hypercontractivity estimates in \cite{MR2174419}.

Let $H = \mathbb{C}^d\oplus\mathbb{C}^d = H_1\oplus H_2$, and fix orthonormal bases $\{e_j\}_{j=1}^d$ for $H_1$ and $\{e_{\overline{j}}\}_{j=1}^d$ for $H_2$ as in Section \ref{preliminary}.  Let $c_j^{(q)} = a(e_j)+a(e_{\overline{j}})^\ast$ be $q$-circular elements as we have studied throughout this paper.  The von Neumann $\mathrm{W}^\ast(c_1^{(q)},\ldots,c_d^{(q)})$ is isomorphic to $\Gamma_q(H)$; indeed, as shown in \cite[Proposition 4]{MR2174419}, the unitary isomorphism
\[ H_1\oplus H_2\to H_1\oplus H_2, \quad (\xi,\eta)\mapsto \frac{1}{\sqrt{2}}(\xi+\eta,\xi-\eta) \]
induces a unitary isomorphism of $\mathcal{F}_q(H_1\oplus H_2)$ and thus an inner automorphism of $\mathscr{B}(\mathcal{F}_q(H))$; since it maps
\[ c^{(q)}_j\mapsto \frac{1}{\sqrt{2}}(X^{(q)}_j+iX^{(q)}_{\overline{j}}), \qquad 
(c^{(q)}_j)^\ast \mapsto \frac{1}{\sqrt{2}}(X^{(q)}_j-iX^{(q)}_{\overline{j}}) \]
it therefore restricts to a (trace preserving) isomorphism from $\mathrm{W}^\ast(c_1^{(q)},\ldots,c_d^{(q)})$ onto $\mathrm{W}^\ast(X^{(q)}_1,\ldots,X^{(q)}_{\overline{d}}) = \Gamma_q(H_1\oplus H_2) = \Gamma_q(H)$.  We can therefore transport all of the above results about the $q$-OU semigroup over $\Gamma_q(H)$ to the $q$-circular von Neumann algebra.

Let $\mathcal{H}_q(\mathbb{C}^d) = \mathbb{C}\langle c^{(q)}_1,\ldots,c^{(q)}_d\rangle$ denote the (non-selfadjoint) algebra of all noncommutative polynomial functions $\sum_{w}\alpha_w c_w^{(q)}$ in the $q$-circular generators (and {\em not} in their adjoints $c_1^{(q)*},\ldots,c_d^{(q)*}$).  In \cite{MR2174419}, the first author identified the intertwined closure $L^p_{\mathrm{hol}}(\mathcal{H}_q(\mathbb{C}^d),\tau)$ of $\mathcal{H}_q(\mathbb{C}^d)$ in $L^p( \Gamma_q(H),\tau)$ as a $q$-Gaussian version of the holomorphic Gaussian spaces $L^p_{\mathrm{hol}}(\mathbb{C}^d,\gamma)$ in \cite{MR706641}.  The intertwined number operator acts on $L^2(\mathcal{H}_q(\mathbb{C}^d),\tau)$ (identified with a subspace of $L^2(\Gamma_q(H),\tau)$) much as in the $q=1$ holomorphic setting:
 \[N_qc_w^{(q)}= |w| c_w^{(q)}, \quad w \in [d]^*.\]
Thus, on the algebra $\mathcal{H}_q(\mathbb{C}^d)$ of holomorphic polynomials in $c_1^{(q)},\ldots,c_d^{(q)}$, the $q$-OU semigroup $e^{-tN_q}$ coincides with the simple dilation semigroup $D_t$ introduced in \eqref{eq:dilation.semigroup}.

The main theorem \cite[Theorem 4]{MR2174419} proves that Janson's strong hypercontractivity estimates hold for the restriction of the $q$-OU semigroup to the case $L^2(\mathcal{H}_q(\mathbb{C}^d),\tau)\to L^r(\mathcal{H}_q(\mathbb{C}^d),\tau)$ for even integers $r$: here the time to contraction is the shorter Janson time $t_J(2,r) = \frac12\ln\frac{r}{2}$.  This strong hypercontractivity result for $q$-Gaussian holomorphic spaces led the first author and Speicher to explore whether other ``strong'' versions of norm inequalities hold when restricted to noncommutative holomorphic spaces like $\mathcal{H}_0$ (and $R$-diagonal generalizations), and this is what motivated the strong Haagerup inequality (in the $q=0$ case) proved in \cite{MR2353703}.  The final theorem  \cite[Theorem 5.4]{MR2353703} uses the strong Haagerup inequality to prove {\bf strong ultracontractivity} for the associated dilation semigroup, which agrees with the holomorphic restriction of the $0$-OU semigroup in the case of circular generators; see Theorem \ref{thm.ultra} for the precise definition of strong ultracontractivity.  Two following papers \cite{MR2564935,MR2661507} made these estimates sharper and also proved the lower bound.

Presently, we prove strong ultracontractivity for the $q$-OU semigroup for $|q|<1$.

\begin{thm} \label{thm.ultra}
For $-1<q<1$, the $q$-Ornstein-Uhleneck semigroup satisfies strong ultracontractivity, with a sharp divergence rate of $t^{-1}$ as $t\downarrow 0$.  Precise: there are constants $\alpha=\alpha(|q|),\beta=\beta(|q|)>0$ such that for $0<t<1$,
    \[\alpha t^{-1}\le\|e^{-tN_q}:L^2(\mathbb{C}\langle c^{(q)}_1,\ldots,c^{(q)}_d \rangle ,\tau)\to \mathrm{W}^*(c^{(q)}_1,\ldots,c^{(q)}_d )\|\le \beta t^{-1}. \]
The upper bound $\beta$ may be taken to equal $\frac12A$ where $A$ is the same constant in Theorem \ref{Mainresult}.
\end{thm}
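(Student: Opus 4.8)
The plan is to prove the two inequalities separately: the upper bound comes straight from the strong Haagerup inequality (Theorem~\ref{Mainresult}), and the lower bound from an explicit one‑variable test family. Throughout, decompose $h\in\mathbb{C}\langle c^{(q)}_1,\ldots,c^{(q)}_d\rangle$ into its homogeneous parts $h=\sum_{n\ge0}h_n$, $h_n=\sum_{|w|=n}\alpha_w c_w^{(q)}$; since $e^{-tN_q}$ acts on $\mathcal{H}_q(\mathbb{C}^d)$ as the dilation semigroup, $e^{-tN_q}h=\sum_{n\ge0}e^{-nt}h_n$, and by Lemma~\ref{qCircular} the $h_n$ are $L^2$‑orthogonal, so $\|h\|_2^2=\sum_{n\ge0}\|h_n\|_2^2$. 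For the upper bound I would then combine the triangle inequality, Theorem~\ref{Mainresult}, and Cauchy--Schwarz:
\[
\|e^{-tN_q}h\|\ \le\ \sum_{n\ge0}e^{-nt}\|h_n\|\ \le\ A\sum_{n\ge0}e^{-nt}\sqrt{n+1}\,\|h_n\|_2\ \le\ A\Bigl(\sum_{n\ge0}(n+1)e^{-2nt}\Bigr)^{1/2}\|h\|_2 .
\]
Since $\sum_{n\ge0}(n+1)e^{-2nt}=(1-e^{-2t})^{-2}$ and $(1-e^{-2t})^{-1}\le C\,t^{-1}$ on $0<t<1$ for an absolute constant $C$, this already gives the upper bound with $\beta$ a constant multiple of $A$. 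To get the sharp leading constant $\beta=\tfrac12A$ one isolates the degree‑zero term (fixed isometrically by $e^{-tN_q}$, so it contributes only $\|h_0\|_2\le\|h\|_2$), uses the estimate $\|h_n\|\le A\sqrt{n}\,\|h_n\|_2$ for $n\ge1$ that the argument of Section~\ref{stronghaag} in fact produces, and applies $e^{-t}(1-e^{-2t})^{-1}=(2\sinh t)^{-1}\le(2t)^{-1}$.

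For the lower bound it suffices to take $d=1$. For $N\in\mathbb{N}$ set
\[
h_N\ =\ \sum_{n=0}^N\frac{(c_1^{(q)})^n}{\sqrt{[n]_q!}},\qquad\text{so}\qquad \|h_N\|_2^2=\sum_{n=0}^N 1=N+1,
\]
using $(c_1^{(q)})^n\Omega=e_1^{\otimes n}$ and $\|e_1^{\otimes n}\|_{\mathcal{F}_q}^2=[n]_q!$. Then $e^{-tN_q}h_N=\sum_{n=0}^N e^{-nt}[n]_q!^{-1/2}(c_1^{(q)})^n$, and I would bound its operator norm below by testing it against a vector $\xi_M=\sum_{m=0}^M c^m[m]_q!^{-1/2}\,e_{\overline{1}}^{\otimes m}\in\mathcal{F}_q(H)$ (with $c=e^{t}$), which is supported in the ``antiholomorphic'' direction and satisfies $\|\xi_M\|^2=\sum_{m=0}^M e^{2mt}$. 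The inputs are the mutual orthogonality of the vectors $e_1^{\otimes j}\otimes e_{\overline{1}}^{\otimes i}$ together with $\|e_1^{\otimes j}\otimes e_{\overline{1}}^{\otimes i}\|_{\mathcal{F}_q}^2=[i]_q!\,[j]_q!$ (immediate from $H_1\perp H_2$ and Lemma~\ref{NCbinom}) and the explicit action
\[
c_1^{(q)}\bigl(e_1^{\otimes j}\otimes e_{\overline{1}}^{\otimes i}\bigr)=e_1^{\otimes(j+1)}\otimes e_{\overline{1}}^{\otimes i}+q^{\,j}[i]_q\,e_1^{\otimes j}\otimes e_{\overline{1}}^{\otimes(i-1)},
\]
which iterates to $(c_1^{(q)})^n e_{\overline{1}}^{\otimes m}=\sum_{j+(m-i)=n}\binom{n}{j}_q\,\frac{[m]_q!}{[i]_q!}\,e_1^{\otimes j}\otimes e_{\overline{1}}^{\otimes i}$.

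With these formulas $\|e^{-tN_q}h_N\cdot\xi_M\|^2$ becomes an explicit double sum $\sum_{i,j}[i]_q![j]_q!\,\bigl|\sum_r \beta_{j+r}\gamma_{i+r}\tfrac{[i+r]_q!}{[i]_q!}\binom{j+r}{j}_q\bigr|^2$, and choosing $M=N=\lfloor 1/t\rfloor$ one finds $\|e^{-tN_q}h_N\cdot\xi_M\|\gtrsim t^{-3/2}\|\xi_M\|$, hence $\|e^{-tN_q}h_N\|\gtrsim t^{-3/2}$ while $\|h_N\|_2\asymp t^{-1/2}$, giving $\|e^{-tN_q}\colon L^2\to\mathrm{W}^*\|\ge\alpha\,t^{-1}$ on $(0,1)$. (When $q=0$ all the $q$‑weights are $1$, the double sum is a clean two‑dimensional geometric estimate, and one reads off $\alpha\approx\tfrac12$.) The hard part is exactly this lower bound: one must check that the $q$‑deformed weights $\binom{n}{j}_q$ and $[m]_q!/[i]_q!$ do not destroy the $t^{-3/2}$ growth seen at $q=0$. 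This is handled using $|\binom{n}{j}_q|\le C_{|q|}$ (the one‑variable form of $\|R_{k,n}\|\le C_{|q|}$ from Lemma~\ref{NCbinom}) together with the comparison $[n]_q!=(1-q)^{-n}\prod_{k=1}^n(1-q^k)$, the product being bounded above and below by positive constants depending only on $|q|$; this turns every $q$‑factorial ratio into a geometric weight, and the $q$‑sum is then dominated, up to a constant depending only on $|q|$, by its $q=0$ counterpart. The upper bound, by contrast, is essentially immediate once Theorem~\ref{Mainresult} is in hand, the only issue being the optimisation of constants to reach $\beta=\tfrac12A$.
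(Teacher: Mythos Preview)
Your upper bound is exactly the paper's argument: decompose into homogeneous pieces, apply Theorem~\ref{Mainresult}, then Cauchy--Schwarz and the identity $\sum_{n\ge0}(n+1)e^{-2nt}=(1-e^{-2t})^{-2}$. Your refinement to reach $\beta=\tfrac12A$ (separating $n=0$ and using $(2\sinh t)^{-1}\le(2t)^{-1}$) is a nice touch the paper does not spell out.

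For the lower bound the paper takes a different route. Rather than testing $e^{-tN_q}h_N$ against an explicit Fock-space vector, it uses the single-variable element $h_t=\sum_{n\ge0}e^{-nt}[n]_q!^{-1/2}c^n$, observes $e^{-tN_q}h_t=h_{2t}$, and bounds $\|h_{2t}\|$ below via the elementary operator inequality $\tau(h^*hh^*h)\le\|h\|^2\tau(h^*h)$, i.e.\ $\|h\|^2\ge\|h\|_4^4/\|h\|_2^2$. This reduces the problem to computing $\|h_{2t}^*h_{2t}\Omega\|_{\mathcal F_q}^2$, which is done by expanding $c^{*m}e^{\otimes n}$ with Lemma~\ref{qCircular} and summing; the needed lower bound on $q$-binomials is exactly the estimate $\binom{n}{k}_q\ge b_q^{-1}$ of the paper's \eqref{eq.binom.bound}. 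The $L^4$ trick is slick because it chooses the ``test vector'' for you (namely $h_{2t}\Omega$ itself) and keeps everything inside the trace.

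Your approach---acting on $\xi_M=\sum_m e^{mt}[m]_q!^{-1/2}e_{\bar 1}^{\otimes m}$---is a legitimate alternative and the ingredients you list (the iterated action formula for $c^n$ on $e_1^{\otimes j}\otimes e_{\bar 1}^{\otimes i}$, the two-sided bounds $[n]_q!\asymp(1-q)^{-n}$ and $\binom{n}{j}_q\asymp 1$) are correct and suffice. One caution: the bound $|\binom{n}{j}_q|\le C_{|q|}$ is the wrong direction for a lower estimate; what you actually need (and mention in passing) is the \emph{lower} bound $\binom{n}{j}_q\ge b_q^{-1}$, which is precisely \eqref{eq.binom.bound}. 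With that in hand your double sum does dominate its $q=0$ counterpart up to a $|q|$-dependent constant, and the $t^{-3/2}$ estimate follows. Compared to the paper, your method is more hands-on and requires tracking an extra parameter $M$, but it has the advantage of bounding the operator norm directly rather than going through the $L^4$ norm.
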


\begin{proof}
The proof is based on the same idea in \cite[Theorem 3.18]{MR2564935}. Note that the subspaces:
\[ \mathcal{H}_q^{(n)} = \left\{\sum_{|w|=n}\alpha_w c^{(q)}_w\colon \alpha_w\in\mathbb{C}, w\in[d]^\ast\right\} \]
are orthogonal in $L^2(\mathcal{H}_q(\mathbb{C}^d),\tau)$.  Indeed, it follows quickly from the definition if the $q$-circular operators via the creation and annihilation operators in orthogonal spaces (at the beginning of Section \ref{preliminary}) that
\begin{equation} \label{eq: c.omega=e} c^{(q)}_w\Omega = e_w. \end{equation}
Since the map $X\mapsto X\Omega$ is an isometric isomorphism, $c^{(q)}_w$ and $c^{(q)}_{w'}$ are orthogonal whenever $\langle e_w,e_{w'}\rangle_{q} = 0$; in particular if $w$ and $w'$ have different lengths.  Since $\mathcal{H}_q(\mathbb{C}^d) = \mathbb{C}\langle c_1^{(q)},\ldots,c_d^{(q)}\rangle = \bigoplus_n \mathcal{H}_q^{(n)}$, it follows that every $h\in L^2(\mathcal{H}_q(\mathbb{C}^d),\tau)$ has a Hilbert space decomposition $h = \sum_n h_n$ with $h_n\in\mathcal{H}^{(n)}_q$.  Most importantly: our main Theorem \ref{Mainresult} asserts precisely that $\|h_n\|\le A\sqrt{n+1}\|h_n\|_2$.

Notice that, for any $h_n\in \mathcal{H}_q^{(n)}$, $N_q h_n =nh_n$,  and thus $e^{-tN_q}h_n = e^{-nt}h_n$.  Thus, if $h\in L^2(\mathcal{H}_q,\tau)$ then $h = \sum_n h_n$ and so $e^{-tN_q}h = \sum_n e^{-nt} h_n$.   We may then estimate the operator norm of $e^{-tN_q}h$ bluntly with the triangle inequality and Theorem \ref{Mainresult}:
\[ \|e^{-tN_q}h\| \le \sum_{n=0}^\infty e^{-nt}\|h_n\| \le \sum_{n=0}^\infty e^{-nt}\cdot A\sqrt{n+1}\|h_n\|_2. \]
Then, from the Cauchy--Schwarz inequality, we have
\begin{equation} \label{eq.C-S} \|e^{-tN_q}h\|^2 \le A^2 \sum_{n=0}^\infty (n+1)e^{-2nt} \cdot \sum_{n=0}^\infty \|h_n\|_2^2 \end{equation}
and the latter sum is simply $\|h\|_2^2$ by the orthogonality of the decomposition $h = \sum_n h_n$.  The first sum can be computed explicity with elementary calculus:
\[ \sum_{n=0}^\infty (n+1)e^{-2nt} = \sum_{n=0}^\infty e^{-2nt} -\frac12\sum_{n=0}^\infty \frac{d}{dt} e^{-2nt} = \frac{1}{(1-e^{-2t})^2}. \]
Hence, combining with \eqref{eq.C-S},
\[ \|e^{-tN_q}h\| \le \frac{A}{1-e^{-2t}}\|h\|_2 \qquad \text{for }\; h\in L^2(\mathcal{H}_q(\mathbb{C}^d),\tau). \]
The reader may readily verify that the function $t\mapsto t/(1-e^{-2t})$ is decreasing on $\mathbb{R}_+$ and has limit $\frac12$ as $t\downarrow 0$; thus, we have proved the upper bound in thee theorem.

We obtain a lower bound by applying $e^{-tN_q}$ to a $L^2$-vector generated by a single $q$-circular operator $c_1^{(q)}$. For simplicity, we use the notations $c,a,\overline{a},e,\overline{e}$ instead of $c_1^{(q)},a_1,a_{\overline{1}},e_1,e_{\overline{1}}$.  Since $\langle e^{\otimes n},e^{\otimes m}\rangle_q =\delta_{n,m}[n]_q!$ for $m,n \in \mathbb{Z}_{\ge 0}$ (cf.\ \cite[Theorem 2.1]{MR1811255}), the vector
\[\psi_t=\sum_{n=0}^{\infty}e^{-nt} \frac{e^{\otimes n}}{\sqrt{[n]_q!}} \]
is in $\mathcal{F}_q(\mathbb{C}\oplus\mathbb{C})$ for all $t>0$, with $\|\psi_t\|_{\mathcal{F}_q}^2 = \sum_n e^{-2nt} = (1-e^{-2t})^{-1}$.  Note from \eqref{eq: c.omega=e} that $c^n\Omega =e^{\otimes n}$; thus, the isometric isomorphism $X\mapsto X\Omega$ between $\mathcal{F}_q(\mathbb{C}\oplus\mathbb{C})$ and $L^2(\mathrm{W}^\ast(c),\tau)$ identifies the vector $\psi_t$ with
\[ h_t=\sum_{n=0}^{\infty}e^{-nt} \frac{c^n}{\sqrt{[n]_q!}}. \]
By definition of $e^{-tN_q}$, we have
\[ e^{-tN_q}h_t =\sum_{n=0}^{\infty}e^{-2nt} \frac{c^n}{\sqrt{[n]_q!}}=h_{2t}.\]
Hence, by the upper bound proved above, we find that $\|h_{2t}\| \le \beta t^{-1} \|h_{t}\|_2 = \beta t^{-1} \|\psi_{t}\|_{\mathcal{F}_q} < \infty$ for all $t>0$.  In particular, $h_t$ is actually in $\mathrm{W}^\ast(c)$, and so we may test the sharpness of the ultracontractivity upper bound on it.

To do so, we use the following simple estimate for the norm of $h=h_{2t}$.  Since $h^\ast h\ge 0$ is positive semidefinite in the $\mathrm{C}^\ast$-algebra $\mathrm{W}^\ast(c)$, the operator $\|h^\ast h\|1-h^\ast h$ is also $\ge 0$.  Thus, the trace (or any state) satisfies
\begin{equation} \label{eq:L4L2.estimate} \tau(h^\ast h h^\ast h) \le \tau(\|h^\ast h\| h^\ast h) = \|h\|^2 \tau(h^\ast h). \end{equation}
We have already calculated above that $\tau(h^\ast h) = \|h\|_2^2 = \|\psi_{2t}\|_{\mathcal{F}_q}^2 = (1-e^{-4t})^{-1}$, so to bound $\|e^{-tN_q} h_t\| = \|h_{2t}\| = \|h\|$ from below, it behooves us to compute
\[ \tau(h^\ast hh^\ast h) = \|h_{2t}^\ast h_{2t}\Omega\|_{\mathcal{F}_q}^2. \]

By Lemma \ref{qCircular}, we have for each $n \in \mathbb{Z}_{\ge 0}$ \[c^{*n}=\sum_{k=0}^n \binom{n}{k}_q\overline{a}^{n-k}a^{*k}\ \]
 where $\binom{n}{k}_q=\frac{[n]_q!}{[k]_q![n-k]_q!}$ is the $q$-binomial coefficient. By applying this to $h_{2t}^\ast h_{2t}\Omega$, we get
 \begin{align*}
h_{2t}^\ast h_{2t}\Omega&=\sum_{m,n=0}^{\infty}e^{-2(m+n)t}\frac{c^{*m}(e^{\otimes n})}{\sqrt{[m]_q![n]_q!}}\\
&=\sum_{m,n=0}^{\infty}\sum_{k=0}^{m}e^{-2(m+n)t}\binom{m}{k}_q\frac{\overline{a}^{m-k}a^{*k}}{\sqrt{[m]_q![n]_q!}} e^{\otimes n}\\
&=\sum_{m,n=0}^{\infty}\sum_{k=0}^{\min(m,n)}e^{-2(m+n)t}\binom{m}{k}_q\frac{[n]_q[n-1]_q\cdots [n-k+1]_q}{\sqrt{[m]_q![n]_q!}} \ \overline{e}^{\otimes m-k} \otimes e^{\otimes n-k}\\
&=\sum_{m,n=0}^{\infty}\sum_{k=0}^{\min(m, n)}e^{-2(m+n)t}\binom{m}{k}_q\binom{n}{k}_q[k]_q! \ \frac{\overline{e}^{\otimes m-k} \otimes e^{\otimes n-k}}{\sqrt{[m]_q![n]_q!}}. 
\end{align*}
Since $\mathbb{Z}_{\ge 0}^{3}\ni (m,n,k)\mapsto (m+k,n+k,k)\in \{(a,b,c)\in \mathbb{Z}_{\ge 0}^3|0\le c \le \min(a,b)\}$ is a bijection, we obtain
\begin{align*}
h_{2t}^\ast h_{2t}\Omega&=\sum_{m,n,k=0}^{\infty}e^{-2(m+n+2k)t}\binom{m+k}{k}_q\binom{n+k}{k}_q[k]_q! \ \frac{\overline{e}^{\otimes m} \otimes  e^{\otimes n}}{\sqrt{[m+k]_q![n+k]_q!}}.\\
\end{align*}
Since $\langle \overline{e}^{\otimes m }\otimes e^{\otimes n},\overline{e}^{\otimes m' }\otimes e^{\otimes n'}\rangle_q=\delta_{m,m'}\delta_{n,n'} [m]_q![n]_q!$, we can compute $\|h_{2t}^\ast h_{2t}\Omega\|_{\mathcal{F}_q}^2$ as
 \begin{align*} &\sum_{m,n=0}^{\infty}\left(\sum_{k=0}^{\infty}e^{-2(m+n+2k)t}\binom{m+k}{k}_q\binom{n+k}{k}_q\ \frac{[k]_q!\sqrt{[m]_q![n]_q!}}{\sqrt{[m+k]_q![n+k]_q!}}\right)^2 \\
&=\sum_{m,n=0}^{\infty}e^{-4(m+n)t}\left(\sum_{k=0}^{\infty}e^{-4kt}\sqrt{\binom{m+k}{k}_q}\sqrt{\binom{n+k}{k}_q}\right)^2. 
\end{align*}
By definition $[n]_q=\frac{1-q^n}{1-q}$ and
\begin{equation} \label{eq.binom.bound} \binom{n}{k}_q^{-1}=\frac{\prod_{i=1}^k(1-q^i)}{\prod_{i=1}^k(1-q^{n-i+1})} \le b_q <\infty \quad\text{where}\quad b_q=\prod_{i=1}^{\infty} \frac{1+|q|^i}{1-|q|^i}.
\end{equation}
Hence, we have the following estimate:
\begin{align*}
\|h_{2t}^\ast h_{2t}\Omega\|_{\mathcal{F}_q}^2 &=     \sum_{m,n=0}^{\infty}e^{-4(m+n)t}\left(\sum_{k=0}^{\infty}e^{-4kt}\sqrt{\binom{m+k}{k}_q}\sqrt{\binom{n+k}{k}_q}\right)^2 \\
&\ge b_q^{-2} \left(\sum_{m=0}^{\infty}e^{-4mt}\right)^4 =b_q^{-2}(1-e^{-4t})^{-4}.
\end{align*}
Hence, using \eqref{eq:L4L2.estimate}, we have
\[ \frac{\|e^{-tN_q}h_t\|^2}{\|h_t\|_2^2} \ge \frac{\|h_{2t}^\ast h_{2t}\Omega\|_{\mathcal{F}_q}^2}{\|h_{2t}\|_2^2\|h_t\|_2^2} \ge \frac{b_q^{-2}(1-e^{-4t})^{-4}}{(1-e^{-4t})^{-1}(1-e^{-2t})^{-1}}
= b_q^{-2}\frac{(1+e^{-2t})^{-3}}{(1-e^{-2t})^2}. \]
This is $\ge \alpha^2 t^{-2}$ with $\alpha = 1/4b_q$, proving the stated lower bound.
\end{proof}

We now conclude with the observation that a simplified form of the argument for the lower bound in Theorem \ref{thm.ultra} shows that our $q$-circular strong  Haagerup inequality in Theorem \ref{Mainresult} is sharp, in a particularly strong form.

\begin{cor} \label{cor.ultra} For $-1<q<1$, there is a constant $b_q = b_{|q|}<\infty$ (cf.\ \eqref{eq.binom.bound}) such that
\[ \|c^n\| \ge b_q^{-1}\sqrt{n+1}\|c^n\|_2. \]
\end{cor}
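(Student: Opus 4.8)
The plan is to mimic the lower-bound portion of the proof of Theorem~\ref{thm.ultra}, but applied directly to the single monomial $h = c^n$ (where $c = c^{(q)}_1 = a + \overline{a}^\ast$, using the one-variable notation from that proof) instead of to a generating series. First I would record the two basic scalars: by Lemma~\ref{qCircular}, $c^n\Omega = e^{\otimes n}$, so $\|c^n\|_2^2 = \|e^{\otimes n}\|_{\mathcal{F}_q}^2 = [n]_q!$; and for the operator norm I would invoke the elementary $L^4$--$L^2$ estimate \eqref{eq:L4L2.estimate}: since $(c^n)^\ast c^n \ge 0$ in $\mathrm{W}^\ast(c)$, we get $\tau\big((c^n)^\ast c^n (c^n)^\ast c^n\big) \le \|c^n\|^2\,\tau\big((c^n)^\ast c^n\big)$, i.e.
\[ \|c^n\|^2 \ \ge\ \frac{\|(c^n)^\ast c^n\Omega\|_{\mathcal{F}_q}^2}{\|c^n\|_2^2}. \]
Thus everything reduces to a lower bound for $\|(c^n)^\ast c^n\Omega\|_{\mathcal{F}_q}^2$.

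Next I would compute that Fock-space norm explicitly. Using the one-variable Wick-type expansion from Lemma~\ref{qCircular}, namely $c^{\ast n} = \sum_{k=0}^n \binom{n}{k}_q \overline{a}^{\,n-k} a^{\ast k}$, together with $a^{\ast k} e^{\otimes n} = \tfrac{[n]_q!}{[n-k]_q!}\,e^{\otimes n-k}$ and $\overline{a}^{\,j} e^{\otimes j} = \overline{e}^{\otimes j}\otimes e^{\otimes j}$, one obtains (after reindexing $j = n-k$)
\[ (c^n)^\ast c^n\Omega \ =\ c^{\ast n} e^{\otimes n} \ =\ \sum_{j=0}^n \binom{n}{j}_q\, \frac{[n]_q!}{[j]_q!}\ \overline{e}^{\otimes j}\otimes e^{\otimes j}. \]
The vectors $\overline{e}^{\otimes j}\otimes e^{\otimes j}$ lie in distinct graded pieces of $\mathcal{F}_q(H)$, hence are pairwise orthogonal, and $\|\overline{e}^{\otimes j}\otimes e^{\otimes j}\|_{\mathcal{F}_q}^2 = [j]_q!^2$ (since $\overline{e}\perp e$, only the permutations preserving the two blocks contribute to $\langle P^{(2j)}\,\cdot\,,\cdot\,\rangle$, giving $\sum_{\sigma\in S_j\times S_j} q^{\mathrm{inv}(\sigma)} = [j]_q!^2$). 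Therefore $\|(c^n)^\ast c^n\Omega\|_{\mathcal{F}_q}^2 = [n]_q!^2 \sum_{j=0}^n \binom{n}{j}_q^2$.

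Finally I would assemble the pieces: from the two displays above,
\[ \|c^n\|^2 \ \ge\ \frac{[n]_q!^2 \sum_{j=0}^n \binom{n}{j}_q^2}{[n]_q!} \ =\ [n]_q!\sum_{j=0}^n \binom{n}{j}_q^2, \]
and the bound \eqref{eq.binom.bound} gives $\binom{n}{j}_q \ge b_q^{-1}$ for every $0\le j\le n$ (this is valid since $[m]_q = (1-q^m)/(1-q) > 0$ for all $m$ when $-1<q<1$, so all $q$-binomials are positive). Hence $\sum_{j=0}^n \binom{n}{j}_q^2 \ge (n+1) b_q^{-2}$, and $\|c^n\|^2 \ge b_q^{-2}(n+1)[n]_q! = b_q^{-2}(n+1)\|c^n\|_2^2$; taking square roots yields the claim. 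I do not expect a genuine obstacle here: the only steps needing care are the one-variable specialization of the Wick formula and the orthogonality/normalization of the vectors $\overline{e}^{\otimes j}\otimes e^{\otimes j}$, both of which are already exploited in the proof of Theorem~\ref{thm.ultra}, and the positivity of the $q$-binomials needed to apply \eqref{eq.binom.bound}.
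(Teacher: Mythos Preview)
Your proof is correct and essentially matches the paper's: both compute the same quantity $[n]_q!\sum_{j=0}^n\binom{n}{j}_q^2$ as a lower bound for $\|c^n\|^2$ and then invoke \eqref{eq.binom.bound}. The only cosmetic difference is that the paper obtains this bound by testing $c^n$ on the unit vector $\psi=\overline{e}^{\otimes n}/\sqrt{[n]_q!}$ directly (i.e.\ $\|c^n\|\ge\|c^n\psi\|$) rather than via the $L^4$--$L^2$ estimate \eqref{eq:L4L2.estimate}; the two routes yield identical numerics since $\|c^n\overline{e}^{\otimes n}\|=\|c^{\ast n}e^{\otimes n}\|$ by the $e\leftrightarrow\overline{e}$ symmetry.
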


\begin{proof} Adopt the same conventions from the proof of Theorem \ref{thm.ultra}. Expand as above, $c^n=\sum_{k=0}^n \binom{n}{k}_q a^{n-k} \overline{a}^{*k}$, and apply it to the normalized tensor $\psi=\overline{e}^{\otimes n}/\sqrt{[n]_q!}$.
    \begin{align*}
       c^n \psi= \sum_{k=0}^n \binom{n}{k}_q  \frac{[n]_q\cdots [n-k+1]_q}{\sqrt{[n]_q!}} \ e^{\otimes n-k}\otimes \overline{e}^{\otimes n-k}.
    \end{align*}
    Since $\langle e^{\otimes n-k}\otimes \overline{e}^{\otimes n-k},e^{\otimes n-k'}\otimes \overline{e}^{\otimes n-k'}\rangle_q=\delta_{k,k'}([n-k]_q!)^2$, we have
    \begin{align*}
      \|c^n \psi\|^2_{\mathcal{F}_q(H)}&=\sum_{k=0}^{n}\binom{n}{k}_q^2 \frac{[n]_q^2\cdots [n-k+1]_q^2}{[n]_q!} ([n-k]_q!)^2\\
      &=\sum_{k=0}^{n}\binom{n}{k}_q^2  [n]_q! \\
      & \ge b_q^{-2}[n]_q! (n+1)
    \end{align*}
where $b_q$ is the constant defined in \eqref{eq.binom.bound}.  Thus we obtain
    \[ \|c^n\|\ge b^{-1}_q \sqrt{(n+1)[n]_q!}=b^{-1}_q \sqrt{n+1}\|c^n\|_2\]
\end{proof}

\subsection*{Acknowledgement}
We thank Zhiyuan Yang for his suggestions which improved the readability of this paper. We also thank Roland Speicher for helpful discussions.

\bibliographystyle{amsalpha}
\bibliography{reference.bib}

\end{document}